\documentclass[11pt]{amsart}

\usepackage{amsfonts}
\usepackage{amsmath}
\usepackage[utf8]{inputenc}
\usepackage[english]{babel}
\usepackage{amsthm}
\usepackage{color}
\usepackage{hyperref}

\usepackage{pgfplots}

\usepackage{amssymb}
\usepackage{amsopn}
\usepackage{mathrsfs}


\usepackage{todonotes}
\usepackage[left=3cm,right=3cm,top=3cm,bottom=3cm]{geometry}

\DeclareMathOperator{\diam}{diam}		
\DeclareMathOperator{\supp}{supp}

\newcommand{\N}{\mathbb{N}}
\newcommand{\R}{\mathbb{R}}

\renewcommand{\S}{\mathbb{S}}
\renewcommand{\H}{\mathbb{H}}

\newcommand{\be}{\begin{equation}}
\newcommand{\ee}{\end{equation}}

\newcommand{\nH}{\nabla\!_\mathbb{H}}

\DeclareMathOperator{\idty}{Id}

\newcommand{\bH}{\mathbb{H}}
\newcommand{\bS}{\mathbb{S}}
\newcommand{\cL}{\mathcal{L}}
\newcommand{\cZ}{\mathcal{Z}}

\DeclareMathOperator{\bR}{\mathbb{R}}

\newcommand{\distr}{\mathcal{D}}			

\theoremstyle{plain}        
\newtheorem{thm}{Theorem}
\newtheorem{prop}[thm]{Proposition}    
\newtheorem{cor}[thm]{Corollary}      
\newtheorem{lem}[thm]{Lemma}         
\theoremstyle{definition}     
\newtheorem{defn}[thm]{Definition}
   
\theoremstyle{remark}        
\newtheorem{rem}[thm]{Remark}

\usepackage[normalem]{ulem}

\newcommand{\ndperp}{\Xi}
\newcommand{\nd}{\nH\delta}

\title[Hardy-type inequalities in $\bH^n$]{Hardy-type inequalities for the Carnot-Carathéodory distance in the Heisenberg group}
\author[V.~Franceschi]{V.~Franceschi$^\flat$}
\address{$^\flat$ FMJH \& IMO, Universit\'e Paris-Sud, 91405 Orsay, France}
\email{valentina.franceschi@math.u-psud.fr}
\author[D.~Prandi]{D.~Prandi$^\sharp$}
\address{$^\sharp$ CNRS, L2S, CentraleSup\'elec, France}
\email{dario.prandi@l2s.centralesupelec.fr}

\date{\today}

\begin{document}
\maketitle

\begin{abstract}
  In this paper we study various Hardy inequalities in the Heisenberg group $\H^n$, w.r.t.\ the Carnot-Carathéodory distance $\delta$ from the origin. We firstly show that the optimal constant for the Hardy inequality is strictly smaller than $n^2 = (Q-2)^2/4$, where $Q$ is the homogenous dimension. Then, we prove that, independently of $n$, the Heisenberg group does not support a radial Hardy inequality, i.e., a Hardy inequality where the gradient term is replaced by its projection along $\nH\delta$. This is in stark contrast with the Euclidean case, where the radial Hardy inequality is equivalent to the standard one, and has the same constant.
  
  Motivated by these results, we consider Hardy inequalities for non-radial directions, i.e., directions tangent to the Carnot-Carathéodory balls. In particular, we show that the associated constant is bounded on homogeneous cones $C_\Sigma$ with base $\Sigma\subset \S^{2n}$, even when $\Sigma$ degenerates to a point. This is a genuinely sub-Riemannian behavior, as such constant is well-known to explode for homogeneous cones in the Euclidean space.
\end{abstract}

\section{Introduction}


%
%
%
The classical Hardy inequality states that
\begin{equation}\label{eq:hardy-eucl}
 	\int_{\bR^d} |\nabla u|^2 \, dp \ge \left(\frac{d-2}{2}\right)^2 \int_{\bR^d} \frac{|u|^2}{|p|^2}\,dp, \qquad \forall u\in C^\infty_c(\bR^d\setminus\{0\}),
\end{equation}
where the constant on the r.h.s.\ is sharp.
Here, $|p|$ denotes the Euclidean distance from the origin, and $|\nabla u|^2$ is the squared norm of the Euclidean gradient.
In the Euclidean setting, the above is actually equivalent to the so-called ``radial'' Hardy inequality:
\begin{equation}\label{eq:hardy-rad-eucl}
 	\int_{\bR^d} \left|\nabla u(p)\cdot\frac{p}{|p|}\right|^2 \, dp \ge \left(\frac{d-2}{2}\right)^2 \int_{\bR^d} \frac{|u|^2}{|p|^2}\,dp, \qquad \forall u\in C^\infty_c(\bR^d\setminus\{0\}).
\end{equation}
This can  be proved via polar coordinates, and trivially implies \eqref{eq:hardy-eucl}. Then, one can show the sharpness of \eqref{eq:hardy-eucl} by explicitly finding a radial minimizing sequence.

In this paper we are interested in extensions of the above inequalities to the Heisenberg setting (see \cite{Agrachev-book,Capogna-book}). For $n\in \N$, the Heisenberg group $\H^n$ is $\bR^{2n+1}$ endowed with the sub-Riemannian structure generated by the $2n$-dimensional distribution  $\mathcal D \subset T\bR^{2n+1}$ with orthonormal frame:
\begin{equation}\label{eq:XY}
  X_i = \partial_{x_i} - \frac{y_i}{2}\partial_z,
  \qquad
  Y_i = \partial_{y_i} + \frac{x_i}{2}\partial_z, \qquad i =1,\ldots, n.
\end{equation} 
Here, we denoted points in $\bR^{2n+1}$ by $(x,y,z)\in\bR^n\times\bR^n\times\bR$. The resulting structure is step $2$, with the only non-trivial commutators being $[X_i,Y_i]=Z$, $i=1,\ldots, n$, where $Z=\partial_z$. 
We denote by $\delta$ the Carnot-Carathéodory distance from the origin, and for all $\lambda>0$ we let the anisotropic homogeneous dilations of $\bH^n$ to be
\begin{equation}\label{eq:dilation}
  \varrho_\lambda : \bR^{2n+1}\to \bR^{2n+1}, \qquad  \varrho_\lambda(x,y,z) = (\lambda x, \lambda y, \lambda^2 z), \qquad \lambda>0.
\end{equation}
Recall that $\delta$ is $1$-homogeneous w.r.t.\ $\varrho_\lambda$, i.e., $\delta(\varrho_\lambda p) = \lambda \delta(p)$, $\lambda>0$, $p\in \bH^n$. (See Section~\ref{sec:prelim} for more details.) 

The literature regarding Hardy inequalitiees in the Heisenberg setting is extensive.
Garofalo and Lanconelli \cite{Garofalo1990} proved the following weighted Hardy-type inequality
\begin{equation}\label{eq:garofalo}
  \int_{\bH^n}|\nH u|^2\,dp \ge n^2 \int_{\bH^n}\frac{|u|^2}{N^2}|\nH N|^2\,dp,\qquad
  u\in C^\infty_c(\bH^n\setminus\{0\}).
\end{equation}
Here, $\nH u$ denotes the horizontal gradient of $u$, i.e., the gradient along the directions \eqref{eq:XY}, while $N(x,y,z):=\sqrt[4]{(|x|^2+|y|^2)^2+16 z^2}$ is the Korányi gauge. This is an homogeneous norm equivalent to $\delta$, associated with the fundamental solution of the sub-Laplacian. Inequality \eqref{eq:garofalo} is sharp, and the constant is the correct equivalent of the Euclidean Hardy constant, obtained by replacing the Euclidean dimension by the homogeneous dimension $Q=2n+2$.
The main drawback of \eqref{eq:garofalo} is that the weight $|\nH N|$ is singular on the center $\cZ=\{x=y=0\}$. 
Unweighted (but not sharp) Hardy inequalities in the Heisenberg group w.r.t.~the the Korányi gauge have been studied in, e.g., \cite{Bahouri2006,Bahouri2005}.
We refer to \cite{D'Ambrosio2004,NZY2001} for an $L^p$ analog of inequality \eqref{eq:garofalo}. Extensions to more general Carnot groups can be found in, e.g.,  \cite{GoldsteinKombe2008,GSY2018,RS17b}. For sub-Riemannian structures which are not Carnot groups see \cite{D'Ambrosio2003,D'Ambrosio2005,DGP2011,Kogoj2016} and references therein.

Concerning the Carnot-Carathéodory distance, Lehrback \cite{Lehrback2017} has shown that there exists a constant $c>0$ such that
\begin{equation}\label{eq:lehrback}
  \int_{\bH^n}|\nH u|^2\,dp \ge c \int_{\bH^n}\frac{|u|^2}{\delta^2}\,dp,\qquad
  u\in C^\infty_c(\bH^n\setminus\{0\}).
\end{equation}
(The same results follows also from the results in \cite{Bahouri2006,Bahouri2005}, using the fact that the Korányi gauge is equivalent to $\delta$.)
Unfortunately the proofs are based on very general techniques, which have no hope to yield any information about the optimal constant, that we henceforth denote by 
\begin{equation}
  c_n:=\sup\{c>0 \text{ s.t.\ \eqref{eq:lehrback} holds} \} = \inf_{u\in C^\infty_c(\H^n\setminus\{0\})} \frac{\int_{\bH^n}|\nH u|^2\,dp}{\int_{\bH^n}\frac{|u|^2}{\delta^2}\,dp}.
\end{equation} 
Such an optimal constant is claimed in \cite{Yang2013} to coincide with the one of \eqref{eq:garofalo}, i.e., $c_n=n^2$. The argument is based on divergence-like results on metric balls which should yield a radial Hardy inequality for $c>0$, i.e., 
\begin{equation}\label{eq:radial-heis}
  \int_{\bH^n}|\langle\nH u, \nH\delta\rangle|^2\,dp \ge c \int_{\bH^n}\frac{|u|^2}{\delta^2}\,dp,\qquad
  u\in C^\infty_c(\bH^n\setminus\{0\}).
\end{equation}
The claimed result is that $c_n^{\text{rad}} := \sup\{c>0 \text{ s.t.\ \eqref{eq:radial-heis} holds} \} \ge n^2$, which implies $c_n\ge n^2$.

Our first result, contained in Section~\ref{sec:yang}, raises a crucial criticism against the above claim. 
\begin{thm}\label{thm:counter-yang}
	For any $n\ge 1$, we have that
	\begin{equation}
		c_n^{\text{rad}}=0
		\qquad\text{and}\qquad
		c_n < n^2.
	\end{equation}
\end{thm}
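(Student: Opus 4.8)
The plan is to prove the two statements by separate arguments, both driven by the structure of the length‑minimizing geodesics issuing from $0$. Recall (Section~\ref{sec:prelim}, \cite{Agrachev-book,Capogna-book}) that the cut locus of $0$ is $\cZ\setminus\{0\}$, that $\H^n\setminus(\cZ\cup\{0\})$ is foliated by the \emph{unique} minimizing geodesics from $0$, that on this (full‑measure) set $|\nH\delta|=1$ and $\nH\delta$ is tangent to those geodesics, and that along each of them the vertical momentum $\omega$ (the quantity conjugate to $z$) is conserved, so that the geodesic has finite length $2\pi/|\omega|$ as soon as $\omega\ne 0$. I also use the sub-Laplacian $\Delta_\H=\sum_{i=1}^n(X_i^2+Y_i^2)$ and the coarea identity $\int_{\H^n}f(\delta)\,dp=c_Q\int_0^\infty f(r)\,r^{Q-1}\,dr$, with $c_Q>0$ a dimensional constant and $Q=2n+2$.

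\emph{For $c_n^{\mathrm{rad}}=0$.} Since $\langle\nH u,\nH\delta\rangle$ vanishes wherever $u$ is constant along the minimizing geodesics from $0$ (as $\nH\delta$ is tangent to them), I would take $\psi:=\chi(\omega)$ with $\chi\in C^\infty_c((a,b))$, $0<a<b$: then $\psi$ is smooth on $\H^n\setminus(\cZ\cup\{0\})$, $\langle\nH\psi,\nH\delta\rangle\equiv 0$, and — this is exactly the sub-Riemannian effect absent in $\bR^d$ — $\supp\psi\subset\{a\le|\omega|\le b\}\subset\{\delta\le 2\pi/a\}$ is bounded. One checks $0<\int_{\H^n}|\psi|^2\delta^{-2}\,dp<\infty$, the integrability near $\{0\}$ coming from $Q-2=2n>-1$. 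It remains to regularize near $\{0\}$ and near $\cZ$: with $N$ the Korányi gauge, $r:=|(x,y)|$ (which equals $\dcc(\cdot,\cZ)$), cutoffs $\zeta_k$ ($\equiv0$ on $[0,1/k]$, $\equiv1$ on $[2/k,\infty)$) and $\theta_j$ ($\equiv0$ near $0$, $\equiv1$ on $[1/j,\infty)$, of logarithmic type $|\theta_j'(t)|\lesssim(t\log j)^{-1}$ on $[j^{-2},j^{-1}]$, a refinement needed only when $n=1$), set $u_{k,j}:=\zeta_k(N)\,\theta_j(r)\,\psi\in C^\infty_c(\H^n\setminus(\cZ\cup\{0\}))$. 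Leibniz' rule and $\langle\nH\psi,\nH\delta\rangle=0$ give
\[
  \langle\nH u_{k,j},\nH\delta\rangle=\psi\,\theta_j(r)\,\langle\nH\zeta_k(N),\nH\delta\rangle+\psi\,\zeta_k(N)\,\langle\nH\theta_j(r),\nH\delta\rangle,
\]
and since $|\nH N|$ is bounded, $|\nH r|=1$, $|\{1/k\le N\le 2/k\}|\lesssim k^{-Q}$, and $\int\mathbf 1_{\{j^{-2}\le r\le j^{-1}\}}|\theta_j'(r)|^2\,dp\lesssim(\log j)^{-1}$ (here the codimension $2n$ of $\cZ$ enters), both terms tend to $0$ in $L^2$, while $\int_{\H^n}|u_{k,j}|^2\delta^{-2}\,dp\to\int_{\H^n}|\psi|^2\delta^{-2}\,dp\in(0,\infty)$ by dominated convergence. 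Hence $c_n^{\mathrm{rad}}=0$.

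\emph{For $c_n<n^2$.} I would use the ground-state substitution with $\phi=\delta^{-n}$ (positive, Lipschitz on compact subsets of $\H^n\setminus\{0\}$): for $u$ Lipschitz with compact support in $\H^n\setminus\{0\}$ (enough, by density of $C^\infty_c(\H^n\setminus\{0\})$ in the relevant weighted Sobolev space), writing $v:=\delta^{n}u$, integration by parts gives
\[
  \int_{\H^n}|\nH u|^2\,dp-n^2\!\int_{\H^n}\frac{u^2}{\delta^2}\,dp=\int_{\H^n}\delta^{-2n}|\nH v|^2\,dp+n\!\int_{\H^n}g\,u^2\,dp,\qquad g:=\frac1\delta\Bigl(\Delta_\H\delta-\frac{Q-1}{\delta}\Bigr).
\]
The first term is $\ge 0$. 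The eikonal equation $|\nH\delta|^2=1$ a.e.\ plus integration by parts yields $\int(\Delta_\H\delta)h(\delta)\,dp=\int\tfrac{Q-1}{\delta}h(\delta)\,dp$ for $h\in C^\infty_c((0,\infty))$, that is $\int_{\H^n}g\,\rho(\delta)\,dp=0$ for every radial weight $\rho$; on the other hand $g\not\equiv 0$, for otherwise $\Delta_\H(\delta^{2-Q})\equiv 0$ on $\H^n\setminus\{0\}$, forcing the positive $(2-Q)$-homogeneous $\Delta_\H$-harmonic function $\delta^{2-Q}$ to be a multiple of the Korányi fundamental solution $N^{2-Q}$, which is absurd since $N$ is smooth away from the origin while $\delta$ is not smooth on $\cZ\setminus\{0\}$. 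As $g$ is continuous and $(-2)$-homogeneous on $\H^n\setminus(\cZ\cup\{0\})$ with vanishing radial averages, $U:=\{g<0\}$ is a nonempty, dilation-invariant open subset of $\H^n\setminus(\cZ\cup\{0\})$. I would then take $\eta\in C^\infty_c(U\cap\{1<\delta<2\})$, $\eta\ge 0$, $\eta\not\equiv 0$, so $B:=-\int g\,\delta^{-2n}\eta\,dp>0$, and a radial cutoff $h_R$ of logarithmic type, $\equiv 1$ on $[R^{-1},R]$, supported in $[R^{-2},R^2]$, with $\int_{\H^n}\delta^{-2n}|h_R'(\delta)|^2\,dp=c_Q\int|h_R'|^2\delta\,d\delta=O(1/\log R)$. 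For $u=\delta^{-n}(h_R+\epsilon\eta)$: using $\supp h_R'\cap\supp\eta=\varnothing$ and $h_R\equiv 1$ on $\supp\eta$ (for $R$ large), and $\int g\,\delta^{-2n}h_R^2\,dp=0$ by radiality, the right-hand side of the identity equals $O(1/\log R)+\epsilon^2\!\int\delta^{-2n}|\nH\eta|^2-2nB\epsilon-n\epsilon^2\!\int g\,\delta^{-2n}\eta^2=O(1/\log R)-2nB\epsilon+O(\epsilon^2)$; fixing $\epsilon$ small and then $R$ large makes it strictly negative, so $\int|\nH u|^2<n^2\int u^2/\delta^2$ and $c_n<n^2$.

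The hard part, in both halves, is the analysis at $\cZ$. In the first half the delicate point is that the regularization near $\cZ$ produces a vanishing error: this fails for a naive cutoff when $n=1$ (because of the codimension of $\cZ$) and requires the logarithmic choice of $\theta_j$. In the second half the substantial point is the legitimacy of the ground-state identity \emph{across} $\cZ$, i.e.\ that $\Delta_\H\delta$ — and hence $\Delta_\H(\delta^{-n})$ — carries no singular mass on $\cZ$; this I would deduce from the fact that $\nH\delta$ (resp.\ $\nH\delta^{-n}$) is bounded, so that its distributional divergence cannot charge the set $\cZ$, which has codimension $2n\ge 2$. That fact, together with the qualitative observation that $\delta^{2-Q}$ is not $\Delta_\H$-harmonic (equivalently, that $\Delta_\H\delta$ is genuinely non-radial), is the conceptual crux of the whole statement.
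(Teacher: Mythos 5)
Your proposal is correct in both halves, but it follows a genuinely different route from the paper's. For $c_n^{\mathrm{rad}}=0$ you and the paper exploit the same sub-Riemannian phenomenon, namely that the conserved vertical momentum $\omega=r/t$ satisfies $\langle\nH\varphi(\omega),\nH\delta\rangle\equiv0$ and that its superlevel sets $\{|\omega|\ge a\}$ are bounded; the paper implements this with the globally continuous trial functions $u\circ\Phi(t,\varpi,r)=(r/t)^nh(t)$, which reduce the radial quotient to the two-dimensional Euclidean radial Hardy quotient (infimum $0$) and whose admissibility is checked once and for all via Proposition~\ref{prop:H1}, whereas your choice $\chi(\omega)$ makes the numerator identically zero at the price of the cutoff analysis at $\cZ$ and at the origin (your logarithmic cutoff for $n=1$ is indeed necessary, and your estimates are right). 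For $c_n<n^2$ the paper tests with Korányi powers $N^{\alpha/2}$, $\alpha\downarrow-2n$, and shows the limiting quotient is $n^2$ times a weighted average of a function $\eta<1$; you instead perform the ground-state substitution $u=\delta^{-n}v$ and perturb locally inside $\{g<0\}$. Both arguments ultimately rest on the same defect of $\delta$ (it is not ``radially harmonic''); yours localizes it conceptually, the paper's makes it explicit.

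Two steps in your second half are asserted rather than proved and should be nailed down. First, your derivation of $g\not\equiv0$ invokes a Bôcher-type classification of positive $(2-Q)$-homogeneous $\Delta_{\H}$-harmonic functions; this is a genuine, citable theorem for sub-Laplacians on stratified groups, but you can avoid it entirely: if $g\equiv0$ then $\delta^{2-Q}$ is harmonic on $\H^n\setminus\{0\}$, hence $C^\infty$ by H\"ormander hypoellipticity, contradicting the non-smoothness of $\delta$ on the cut locus $\cZ\setminus\{0\}$; alternatively, in $\Phi$-coordinates one computes directly $g\circ\Phi=t^{-2}(r\mu)'/\mu$, which tends to $-\infty$ as $r\to\pm2\pi$, so $g<0$ near $\cZ$. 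Second, your ``no singular mass on $\cZ$'' argument (bounded horizontal field, codimension $2n\ge2$) additionally requires the pointwise quantity $\dive(\nH\delta)$ to lie in $L^1_{\mathrm{loc}}$ near $\cZ$, otherwise the identification of the distributional divergence with the pointwise one does not parse; this holds because near $\cZ$ it behaves like $-(2n-1)/|\xi|$ with $\xi\in\bR^{2n}$, which is locally integrable, and the same integrability is needed to make sense of $\int g\,u^2\,dp$. With these two points filled in, your proof is complete.
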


Since \eqref{eq:lehrback} has to hold with $c_n>0$, the above implies that, in the Heisenberg case, the full Hardy inequality is not equivalent to the radial one. This is in stark contrast w.r.t.\ the Euclidean case.

The flaw of the argument in \cite{Yang2013} is the implicit assumption that, as in the Euclidean setting, the horizontal unit vector field $\nH\delta$ coincides with the generator of the dilations $\frac{d}{d\lambda}\varrho_\lambda$. However, in the Heisenberg setting the latter is not horizontal.
We fix this problem in Section~\ref{sec:garofalo}, and show that this technique yields exactly the weighted Hardy inequality \eqref{eq:garofalo}.

We also mention that in \cite{Ruzhansky2017} a radial Hardy inequality with sharp constant $n^2$ is obtained for any homogenous norm $\|\cdot\|$ (and thus also for $\delta$), by replacing the integrated quantity on the l.h.s.\ of \eqref{eq:radial-heis} with $|\frac{d}{d\|p\|}u|^2$, which takes into account non-horizontal directions.

\subsection{Non-radial Hardy inequalities on homogeneous cones}

Motivated by Theorem~\ref{thm:counter-yang}, in the second part of the paper we derive some new Hardy inequalities by considering the components of the horizontal gradient $\nH u$ along vector fields orthogonal to $\nH\delta$. These techniques are particularly well-adapted to derive Hardy inequalities on homogenous cones, but not on the full space. Let us briefly discuss what happens in the Euclidean case.

Let  $\Sigma\subset\bS^{d-1}$ be a spherical cap. The associated Euclidean cone, obtained via Euclidean (homogeneous) dilations, is $C^{\text{eucl}}_\Sigma=\{ \lambda p \mid \lambda>0, \, p\in \Sigma \}\subset \bR^d$. Hardy inequalities for these sets w.r.t.\ the Euclidean distance from the origin have been deeply investigated in the literature, see e.g., \cite{Fall2012}. 
%
In particular, letting $\nabla^\perp$ denote the components of the gradient orthogonal to the radial vector field $p/|p|$, the arguments in \cite{Fall2012} allow to prove that the following Hardy inequality holds
\begin{equation}
 	\int_{C^{\text{eucl}}_\Sigma} |\nabla^\perp u|^2 \, dp \ge c \int_{C^{\text{eucl}}_\Sigma} \frac{|u|^2}{|p|^2}\,dp, \qquad \forall u\in C^\infty_c(C^{\text{eucl}}_\Sigma), 
\end{equation}
with optimal constant $c_{d}^{\perp,\text{eucl}}=\lambda_1(\Sigma)$, where $\lambda_1(\Sigma)$ is the first Dirichlet eigenvalue of the Laplace-Beltrami operator on $\Sigma\subset \bS^{d-1}$. 
A trivial consequence of this fact is that $c_d^{\perp,\text{eucl}}(\Sigma)\rightarrow +\infty$ as $\Sigma$ degenerates to a point, i.e., when the cone degenerates to a half-line.
In the present paper, we show that this is not the case for homogeneous cones in $\H^n$, that we now introduce.

The Heisenberg homogenous cone associated with a spherical cap $\Sigma\subset \bS^{2n}$ is $C_\Sigma=\{ \varrho_\lambda(p)\mid \lambda>0,\, p\in \Sigma \}$, where $\varrho_\lambda$ are the dilations defined in \eqref{eq:dilation}. Observe that $\partial C_\Sigma$ is an Euclidean paraboloid. 
Although these sets are smooth (which is not the case for the Euclidean cones) the origin is a characteristic point for the sub-Riemannian structure, i.e., $C_\Sigma$ is tangent to the distribution at the origin. 
In the following we focus on spherical caps $\Sigma$ entirely contained in the upper hemisphere $\bS^{2n}_+=\{|x|^2+|y|^2+z^2=1\mid z>0\}$. The associated homogeneous cones are uniquely identified by a parameter $\alpha_\Sigma>0$ such that $C_\Sigma = \{(x,y,z)\in \bR^{2n+1}\mid |x|^2+|y|^2<\alpha_\Sigma z\}$. 

Similarly to the Euclidean case, we let $\nH^\perp$ denote the components of the horizontal gradient orthogonal to $\nH\delta$. (Observe that, given $u\in C^\infty(\H^n)$, the vector field $\nH^\perp u$ only makes sense on  $\H^n\setminus\cZ$, since $\nH\delta$ is not defined on $\cZ$.) 
We consider the following Hardy inequality on $C_\Sigma$, for $c>0$:
\begin{equation}\label{eq:hardy-cone}
    \int_{C_\Sigma} |\nH^\perp u|^2\,dp \ge c\int_{C_\Sigma} \frac{|u|^2}{\delta^2}\,dp,
    \qquad
    \forall u\in C^\infty_c(C_\Sigma),
\end{equation}
and let the optimal constant in the above to be
\begin{equation}
  c_n^\perp(\Sigma):=\sup\{ c>0\text{ s.t.\ \eqref{eq:hardy-cone} holds} \} = \inf_{u\in C^\infty_c(C_\Sigma)}\frac{\int_{C_\Sigma} |\nH^\perp u|^2\,dp}{\int_{C_\Sigma}|u|^2\delta^{-2}\,dp}.
\end{equation}

We have the following.

\begin{thm}\label{thm:hardy-intro1}
  Let $\Sigma\subset \bS^{2n}_+$ be a spherical cap and  $\rho_\Sigma= \phi^{-1}(\alpha_\Sigma)\in[0,2\pi]$,  where $\phi:[0,2\pi]\to [0,+\infty]$ is the order-reversing diffeomorphism defined by
  \begin{equation}\label{eq:phi}
    \phi(r) = \frac{4(1-\cos r )}{r-\sin r}.
  \end{equation}
  Then, we have
  \begin{equation}\label{eq:hardy1}
    \frac{n^2\rho_\Sigma^2}{4}\le c^\perp_n(\Sigma) \le \pi^2n^2.
  \end{equation}
\end{thm}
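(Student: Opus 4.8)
The plan is to obtain the two bounds in \eqref{eq:hardy1} by separate arguments, both exploiting the product-type geometry of the cone $C_\Sigma = \{|x|^2+|y|^2 < \alpha_\Sigma z\}$ together with the fact that, away from the center $\cZ$, the horizontal gradient splits as $\nH u = \langle \nH u, \nd\rangle\, \nd \oplus \nH^\perp u$, so that $|\nH u|^2 = |\langle \nH u, \nd\rangle|^2 + |\nH^\perp u|^2$. For the lower bound $c_n^\perp(\Sigma) \ge n^2\rho_\Sigma^2/4$, the natural idea is a Hardy--Poincaré-type argument adapted to the sub-Riemannian setting: since $\nH^\perp u$ measures the variation of $u$ in the directions tangent to the CC-spheres $\{\delta = r\}$, and the cone $C_\Sigma$ cuts out a geodesically-convex ``cap'' of each such sphere whose intrinsic size is controlled by $\rho_\Sigma = \phi^{-1}(\alpha_\Sigma)$, one expects a Poincaré inequality on each sphere $\{\delta = r\}\cap C_\Sigma$ with a constant that scales like $r^{-2}$ by homogeneity. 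Concretely, I would use the CC-polar decomposition (writing $p = \varrho_r(\omega)$ with $\omega$ on the unit sphere, and using that $\delta$ is $1$-homogeneous and $|\nH^\perp u|$ controls the spherical part of the gradient): integrating $\int_{C_\Sigma}|\nH^\perp u|^2$ along the foliation by CC-spheres and applying a one-parameter Poincaré inequality in the ``angular'' variable whose Dirichlet constant is bounded below by something comparable to $(\rho_\Sigma/2)^2$ — here the function $\phi$ in \eqref{eq:phi} encodes precisely the relation between the Euclidean aperture parameter $\alpha_\Sigma$ and the intrinsic (CC) half-width $\rho_\Sigma$ of the cap, via the explicit form of Heisenberg geodesics (arcs of the form $t\mapsto$ (circle of curvature $\sim 1/r$, lifted to $z$)). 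The factor $n^2$ should come from the $n$-fold product structure of the distribution (orthonormal frame $X_1,\dots,X_n,Y_1,\dots,Y_n$), exactly as the $n^2$ appears in \eqref{eq:garofalo}.

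For the upper bound $c_n^\perp(\Sigma) \le \pi^2 n^2$, the strategy is to produce an explicit (approximate) minimizing sequence. Here I would exploit that $\rho_\Sigma \le 2\pi$ always (since $\phi$ maps $[0,2\pi]$ onto $[0,+\infty]$), so the bound $\pi^2 n^2$ is just $n^2\rho_\Sigma^2/4$ evaluated at the extreme value $\rho_\Sigma = 2\pi$, i.e. for the widest possible cone ($\alpha_\Sigma \to 0^+$, which corresponds to the whole half-space $\{z > 0\}$ minus the center, in the limit). Thus it suffices to show $c_n^\perp(\Sigma) \le \pi^2 n^2$ for \emph{every} $\Sigma$, and the monotone dependence should make the worst case the largest cone. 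The test functions would be of the form $u_\epsilon(p) = \delta(p)^{-n+\epsilon}\eta(p)\psi(\text{angular variable})$, with $\eta$ a cutoff making $u_\epsilon$ compactly supported in $C_\Sigma$ and $\psi$ the first Dirichlet eigenfunction (or a sharp competitor) of the relevant angular operator on the spherical cap; plugging into the Rayleigh quotient and letting $\epsilon \to 0$, the radial part contributes the $\delta^{-2}$ weight exactly and the angular part contributes the eigenvalue, which one bounds above by $\pi^2 n^2$ using that the cap has intrinsic half-width at most $2\pi$ and a one-dimensional comparison ($\lambda_1 \le \pi^2/L^2$ for an interval of length $L$, here $L = 2\rho_\Sigma \le 4\pi$... one must be careful with constants, but this is the shape of it).

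The main obstacle, I expect, is making rigorous the ``CC-polar coordinates / angular Poincaré'' reduction: unlike the Euclidean case, the CC-spheres $\{\delta = r\}$ are not smooth everywhere (they have singularities on the center $\cZ$ and at conjugate/cut loci), the map $p\mapsto (\delta(p), \omega(p))$ is not globally a smooth diffeomorphism, and $\nH\delta$ is only defined off $\cZ$. One must check that $C_\Sigma \subset \bS^{2n}_+$ stays uniformly away from $\cZ$ near the sphere (true because $\Sigma$ is a cap in the open upper hemisphere, so $|x|^2+|y|^2 \gtrsim z$ is bounded below on $\partial$-type regions — actually the subtle point is behavior as $p\to 0$, the characteristic point), and that the co-area/polar formula relating $\int_{C_\Sigma}|\nH^\perp u|^2\,dp$ to an integral over spheres of the intrinsic gradient on $\{\delta=r\}\cap C_\Sigma$ holds with the right Jacobian (homogeneous of the correct degree). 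Getting the \emph{explicit} constant $\rho_\Sigma^2/4$ — rather than just \emph{some} positive constant — requires identifying the sharp angular Poincaré constant on the cap, and this is where the precise geometry of Heisenberg geodesics (and hence the function $\phi$) must be used quantitatively; I would isolate this as a lemma computing, or bounding below, $\lambda_1$ of the cap $\{\delta = 1\}\cap C_\Sigma$ in terms of $\rho_\Sigma$.
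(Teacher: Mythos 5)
There is a genuine gap: your proposal is a research programme whose two key quantitative steps are exactly the ones left unproved, and it misses the mechanism the paper actually uses. In the paper, Theorem~\ref{thm:hardy-intro1} is a two-line corollary of Theorem~\ref{thm:hardy-intro2}: since $C_\Sigma=\{\rho_\Sigma<\psi\le 2\pi\}$, one compares the unweighted Rayleigh quotient with the $\psi$-weighted one pointwise, obtaining $\rho_\Sigma^2\,c_n^\perp(\Sigma,\psi)\le c_n^\perp(\Sigma)\le (2\pi)^2\,c_n^\perp(\Sigma,\psi)$, and then inserts $c_n^\perp(\Sigma,\psi)=n^2/4$. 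The constants $n^2\rho_\Sigma^2/4$ and $\pi^2 n^2$ are thus crude comparison constants, not angular eigenvalues --- the paper explicitly remarks they are probably not sharp. Your plan to ``identify the sharp angular Poincar\'e constant of the cap $\{\delta=1\}\cap C_\Sigma$'' is therefore both harder than necessary and unlikely to produce these particular numbers: in the $\Phi$-coordinates the relevant one-dimensional problem is a degenerate Sturm--Liouville problem on $(\rho_\Sigma,2\pi)$, Dirichlet at $\rho_\Sigma$ and degenerate at $2\pi$ (the density $\mu(r)$ vanishes like $(2\pi-r)^{2n-1}$ and the coefficient $rw(r)\to 0$ as $r\to 2\pi$), so the interval comparison $\lambda_1\le \pi^2/L^2$ you invoke does not apply, and nothing in your sketch pins down the constant $(\rho_\Sigma/2)^2$ from below either.

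There is also a concrete factual error that breaks your upper-bound reduction. You identify $\alpha_\Sigma\to 0^+$ with ``the widest possible cone (the half-space)''; in fact $C_\Sigma=\{|x|^2+|y|^2<\alpha_\Sigma z\}$ \emph{narrows} to the center $\cZ$ as $\alpha_\Sigma\to 0^+$ (equivalently $\rho_\Sigma\to 2\pi$), while the half-space corresponds to $\alpha_\Sigma=+\infty$, $\rho_\Sigma=0$. Moreover, monotonicity of the infimum over nested domains gives $c_n^\perp(\Sigma')\le c_n^\perp(\Sigma)$ when $\Sigma\subset\Sigma'$, so the constant is \emph{largest} on the narrowest cones; the uniform upper bound $\pi^2 n^2$ must therefore be established for arbitrarily narrow $\Sigma$, which no monotonicity argument reduces to a single extremal case. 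Finally, your test-function ansatz $\delta^{-n+\epsilon}$ optimizes the radial part, which is irrelevant here: $\Xi$ has no $\partial_t$ component in $\Phi$-coordinates, and the paper's near-minimizers for the weighted inequality take a compactly supported radial cutoff times $(rw\mu)^\gamma$ with $\gamma\to(-1/2)^+$, concentrating in the angular variable near $r=2\pi$. To repair the proof along the paper's lines you would first need to establish the weighted inequality of Theorem~\ref{thm:hardy-intro2} (via the divergence identity $\partial_r(rw\mu)=-nr\mu$ and Cauchy--Schwarz) and then conclude by the pointwise bounds on $\psi$.
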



\begin{rem}
  For the upper-half plane $\bH^n_+ = \{z>0\} = C_{\bS^{2n}_+}$ we have $\alpha_{\bS^{2n}_+}=+\infty$ and $\rho_{\bS^{2n}_+} = 0$. In this case, the l.h.s.\ of \eqref{eq:hardy1} is trivial. On the other hand, for the center $\cZ = \{x=y=0\}=C_{\{(0,0,1)\}}$ we have  $\alpha_{\{(0,0,1)\}}=0$ and $\rho_{\{(0,0,1)\}} = 2\pi$. For Hardy inequalities in the half space we refer to \cite{LiuLuan13,LuanYang2008} and in more general convex domains we refer to \cite{Larson16,Ruszkowski18}. 
\end{rem}

\begin{rem}
  Theorem~\ref{thm:hardy-intro1} shows that, contrarily to the Euclidean case, $c_n^\perp(\Sigma)$ is always bounded. Nevertheless, in Section~\ref{sec:santalo} we prove that the full Hardy constant on $C_\Sigma$, defined by
  \begin{equation}\label{eq:full-hardy}
  c_n(\Sigma):= \inf_{u\in C^\infty_c(C_\Sigma)}\frac{\int_{C_\Sigma} |\nH u|^2\,dp}{\int_{C_\Sigma}|u|^2\delta^{-2}\,dp},
\end{equation}
  explodes to $+\infty$ as $\Sigma$ degenerates to a point. 
  Heuristically, this can be interpreted as the fact that the explosion of $c_n(\Sigma)$ is not provided by  the non-radial components of the gradient. This contrasts with the Euclidean setting, where the full Hardy constant is $c_d^{\text{eucl}}(\Sigma) = {(d-2)^2}/4+ c_d^{\perp,\text{eucl}}(\Sigma)$. See, e.g., \cite{Fall2012}.
\end{rem}

Although the bounds given in  Theorem~\ref{thm:hardy-intro1}  are probably not sharp, they are deduced from a weighted directional Hardy inequality which is sharp. In order to present it, we need to introduce some notation.


As detailed in Section~\ref{sec:prelim}, exploiting the explicit optimal synthesis of the Heisenberg's geodesics issued from the origin, one can introduce ``polar''-like coordinates $(t, \varpi, r)\in U = [0,+\infty)\times \bS^{2n-1}\times (-2\pi,2\pi)$ which parametrize $\bH^n\setminus\cZ$. 
In particular, the coordinates $t, r$ of a point $(\xi, z)\in \bH^n$ are defined by $t=\delta(\xi, z)$, and $r = \psi(\xi, z)$, where $\psi(\xi,z)= \phi^{-1}(|\xi|^2/z)$ for $\phi$ defined in \eqref{eq:phi}. Observe that $\psi(0,z) = \operatorname{sgn}(z)2\pi$ and $\psi(\xi,0)=0$.

Our main result is then the following.

\begin{thm}\label{thm:hardy-intro2}
  Let $\Sigma\subset \bS^{2n}_+$ be a spherical cap. Then, we have
  \begin{equation}\label{eq:hardy2}
    \int_{C_\Sigma} \frac{|\nH^\perp u|^2}{\psi}\,dp \ge \frac{n^2}{4}\int_{C_\Sigma} \frac{|u|^2}{\delta^2} \psi\,dp,
    \qquad
    \forall u\in C^\infty_c(C_\Sigma).
  \end{equation}
  Moreover, the above inequality is sharp, in the sense that
  \begin{equation}
    c_n^\perp(\Sigma,\psi) := \inf_{u\in C^\infty_c(C_\Sigma)}\frac{\int_{C_\Sigma} \psi^{-1}|\nH^\perp u|^2\,dp}{\int_{C_\Sigma} \psi{|u|^2}{\delta^{-2}}\,dp}  = \frac{n^2}{4}.
  \end{equation}
\end{thm}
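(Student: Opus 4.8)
The plan is to reduce \eqref{eq:hardy2} to a one–dimensional weighted Hardy inequality in the variable $r=\psi$, whose singular endpoint is the center $\cZ$. The key geometric input concerns $\nH\psi$. Writing $\psi=\phi^{-1}(\sigma)$ with $\sigma=(|x|^2+|y|^2)/z$, a direct computation gives $|\nH\sigma|^2=(|x|^2+|y|^2)\,N^4/(4z^4)$, hence $|\nH\psi|^2=A(\psi)/\delta^2$ for a function $A$ explicitly built from $\phi$. Moreover, since $\psi/\delta$ is the conserved ``angular charge'' of the geodesic from the origin while $\nH\delta$ is its terminal velocity, one has $\langle\nH\psi,\nH\delta\rangle=\tfrac{d}{ds}\psi(\gamma(s))=\psi/\delta$; therefore the component of $\nH\psi$ orthogonal to $\nH\delta$, namely
\[
  (\nH\psi)^\perp:=\nH\psi-\tfrac{\psi}{\delta}\,\nH\delta=\delta\,\nH\!\left(\tfrac{\psi}{\delta}\right),
\]
satisfies $|(\nH\psi)^\perp|^2=B(\psi)/\delta^2$ with $B(r):=A(r)-r^2$, and $B(r)>0$ for $r\in(0,2\pi)$ while $B(2\pi)=0$.

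Next I would carry out the reduction. The integral curves of the horizontal vector field $(\nH\psi)^\perp$ foliate $C_\Sigma\setminus\cZ$; by $\langle\nH\psi,\nH\delta\rangle=\psi/\delta$ each such leaf lies on one geodesic sphere $\{\delta=t\}$, and along it $\psi$ increases from $\rho_\Sigma$ (the boundary $\partial C_\Sigma$, where test functions vanish) up to $2\pi$ (the center). Since $(\nH\psi)^\perp\perp\nH\delta$, Cauchy–Schwarz yields the pointwise bound $|\nH^\perp u|^2\ge\langle\nH u,(\nH\psi)^\perp\rangle^2/|(\nH\psi)^\perp|^2=B(\psi)\,\delta^{-2}\,|\partial_r u|^2$, where $\partial_r$ denotes differentiation along the leaves parametrised by $r=\psi$. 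Integrating over $C_\Sigma$ with Fubini along this foliation (the leaf space being coordinatised by $\delta$ and a transverse parameter $\beta$), and noting that the weight ratio on each leaf is the $\beta$–independent $B(r)/r^2$ while the endpoint behaviour of the transverse density $\mu$ is the same on all leaves, inequality \eqref{eq:hardy2} follows from the family of one–dimensional inequalities
\[
  \int_{\rho_\Sigma}^{2\pi}\frac{B(r)}{r}\,|h'(r)|^2\,\mu(r)\,dr\;\ge\;\frac{n^2}{4}\int_{\rho_\Sigma}^{2\pi} r\,|h(r)|^2\,\mu(r)\,dr,
  \qquad h\in C^\infty_c([\rho_\Sigma,2\pi)).
\]
Likewise, testing with $u=\delta^{-n/2}h(\psi)\chi(\delta)$ (which makes the $\delta$–integral factor out of both sides) bounds $c^\perp_n(\Sigma,\psi)$ above by the infimum of this $1$–D quotient.

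Thus the whole theorem rests on showing that the displayed $1$–D inequality holds with sharp constant $n^2/4$, independently of $\rho_\Sigma$ — the point being that this constant is dictated only by the degeneration of the weights at $r=2\pi$. The key asymptotics are $\phi(r)\sim(2\pi-r)^2/\pi$ and, after a short computation, $B(r)\sim\pi^2(2\pi-r)^2$ as $r\to2\pi$, together with $\mu(r)\sim c_0(2\pi-r)^{2n-1}$ (the center has codimension $2n$ in the $2n$–dimensional geodesic sphere). Near $r=2\pi$ the inequality therefore reduces to $\int|h'|^2(2\pi-r)^{2n+1}\gtrsim n^2\int|h|^2(2\pi-r)^{2n-1}$ on a half–line, whose sharp constant is $\big(\tfrac{(2n+1)-1}{2}\big)^2=n^2$; tracking the prefactors turns $n^2$ into $n^2/4$. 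On the full interval I would conclude from Muckenhoupt's criterion, checking that
\[
  K:=\sup_{\rho_\Sigma<x<2\pi}\Big(\int_x^{2\pi} r\,\mu(r)\,dr\Big)\Big(\int_{\rho_\Sigma}^{x}\frac{r}{B(r)\mu(r)}\,dr\Big)=\frac1{n^2},
\]
with the supremum attained as $x\to2\pi$, so that the best $1$–D constant is $\ge\tfrac1{4K}=\tfrac{n^2}{4}$; equivalently, $v(r)=(2\pi-r)^{-n}$ is a supersolution, solving $-\big(\tfrac{B\mu}{r}v'\big)'=\tfrac{n^2}{4}\,r\mu\,v$ to leading order at $r=2\pi$ (globally this is the weighted Hardy inequality on $C_\Sigma$ associated with the horizontal vector field $F=\tfrac{n}{2\pi-\psi}(\nH\psi)^\perp$). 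Sharpness, i.e. that the $1$–D constant is $\le n^2/4$ and hence $c^\perp_n(\Sigma,\psi)=n^2/4$, is obtained by testing with the logarithmically truncated minimizers $h_\varepsilon(r)\approx(2\pi-r)^{-n}$ supported in $(2\pi-1,2\pi-\varepsilon)$: both integrals grow like $|\log\varepsilon|$ with ratio $\to n^2/4$. Lifted to $u_\varepsilon=\delta^{-n/2}h_\varepsilon(\psi)\chi(\delta)$ these lie in $C^\infty_c(C_\Sigma)$ for \emph{every} $\Sigma$ — they are supported away from $\partial C_\Sigma$, from $\cZ$ and from the origin — which is exactly why the constant does not depend on $\Sigma$.

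I expect the main obstacle to be the reduction step: rigorously setting up the $(\delta,\beta,r)$ coordinates adapted to the foliation by integral curves of $(\nH\psi)^\perp$, verifying compatibility with the weight $1/\psi$, and handling the singular fibre $\cZ$ — where $\nH^\perp u$ is undefined yet test functions need not vanish, and which the leaves approach with infinite length, so that $u$ restricted to a leaf is not compactly supported (one must check $\int\tfrac{B\mu}{r}|h'|^2\,dr<\infty$ and invoke Muckenhoupt in its ``Dirichlet at one endpoint'' form). The identities for $\nH\psi$ and the endpoint asymptotics of $\phi$, $B$ and $\mu$ are computational but delicate, since they are what pin down the clean constant $n^2/4$; and in the sharpness part one must ensure that the errors from truncating $h_\varepsilon$ and from the $\delta$–cutoff both vanish in the limit.
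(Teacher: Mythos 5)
Your geometric inputs check out against the paper's frame: since $\psi\circ\Phi=r$ and $V_2=\Phi^*\Xi$ has $\partial_r$--component $\tfrac{r}{t}w(r)$ while $V_j\psi=0$ for $j\ge 3$, one indeed has $\nH\psi=\tfrac{\psi}{\delta}\nH\delta+\tfrac{\psi w(\psi)}{\delta}\,\Xi$, so $(\nH\psi)^\perp$ is parallel to $\Xi$ and $B(r)=r^2w(r)^2$. Your one--dimensional inequality
\[
\int_{\rho_\Sigma}^{2\pi}\frac{B(r)}{r}\,|h'|^2\,\mu\,dr\ \ge\ \frac{n^2}{4}\int_{\rho_\Sigma}^{2\pi} r\,|h|^2\,\mu\,dr
\]
is exactly equivalent to what the paper proves; the paper just packages it globally, by integrating $\Xi(u^2/\delta)$ over $\H^n$, discarding the angular $\tilde J\varpi$ part of $\Xi$ via Lemma~\ref{prop:null_div}, integrating by parts in $r$, and applying Cauchy--Schwarz. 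Your ``Fubini along the foliation by integral curves of $(\nH\psi)^\perp$'' is the geometric counterpart of that lemma: the leaves spiral in $\varpi$ along $\tilde J\varpi$, and one needs precisely that this rotation field is divergence-free on $\S^{2n-1}$ for the transverse measure to decouple. So this step is fillable, but it is the content of Lemma~\ref{prop:null_div}, not a formality.

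The genuine gap is in how you pin down the constant $n^2/4$. Everything rests on the exact identity $\partial_r(rw\mu)=-nr\mu$ (Remark~\ref{p:rwm}, eq.~\eqref{eq:rwm}), which you never identify. Because of it, $(rw\mu)^{-1/2}$ is an \emph{exact} positive solution of $-\bigl(\tfrac{B\mu}{r}v'\bigr)'=\tfrac{n^2}{4}\,r\mu\,v$ on all of $(\rho_\Sigma,2\pi)$, and your Muckenhoupt product evaluates in closed form to $\tfrac{1}{n^2}\bigl(1-\tfrac{(rw\mu)(x)}{(rw\mu)(\rho_\Sigma)}\bigr)<\tfrac{1}{n^2}$, so the supremum is indeed $1/n^2$, attained only as $x\to 2\pi$. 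As written, however, you justify neither claim: the endpoint asymptotics of $B$ and $\mu$ give only the \emph{limit} of the Muckenhoupt product as $x\to2\pi$, not that the supremum over all $x\in(\rho_\Sigma,2\pi)$ does not exceed $1/n^2$; and $(2\pi-r)^{-n}$ is claimed to be a supersolution only ``to leading order,'' which proves nothing about the inequality on the whole interval. Since the sharp constant is the entire content of the theorem, this must be closed — either by the identity above (which is what the paper does, and which also generates its sharpness family $v=\chi\,(rw\mu)^\gamma$, $\gamma\downarrow -1/2$, matching your $(2\pi-r)^{-n}$ near the center), or by an explicit global verification of $K\le 1/n^2$. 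Your sharpness test functions, the $O(1)$ truncation errors, and the ``Dirichlet at one endpoint only'' treatment of the center $\cZ$ are all correct once the above is in place.
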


\begin{rem}
As shown in Appendix~\ref{a:euclidean}, the arguments used in the above are valid also in the Euclidean case. In particular, in this case they allow to correctly predict the explosion of the constant $c^{\perp,\text{eucl}}_n(\Sigma)$ as $\Sigma$ degenerates to a point.  
\end{rem}

\subsection{Structure of the paper}

In Section~\ref{sec:prelim} we present the Heisenberg group and some preliminary constructions needed in the following, including suitable polar-like coordinates. We exploit the latter in Section~\ref{sec:yang} to prove Theorem~\ref{thm:counter-yang}, and in Section~\ref{sec:hardy-vert} to obtain Theorems~\ref{thm:hardy-intro1} and  \ref{thm:hardy-intro2}. Section~\ref{sec:santalo} is devoted to showing the explosion of the full Hardy constant $c_n(\Sigma)$ as $\Sigma$ degenerates to a point, via the techniques developed in \cite{prandi2019}. A fix for the arguments of \cite{Yang2013} proposed in Section~\ref{sec:garofalo}. Finally, in the appendix we discuss the Euclidean counterpart of Theorems~\ref{thm:hardy-intro1} and  \ref{thm:hardy-intro2}.

\subsection*{Acknowledgments} We thank N.\ Garofalo and L.\ Rizzi for fruitful discussions on the topic. We also thank Q.\ Yang for his kind replies to our questions. The authors acknowledge the support of ANR-15-CE40-0018 project \textit{SRGI - Sub-Riemannian Geometry and Interactions} and of a public grant overseen by the French National Research Agency (ANR) as part of the Investissement d'avenir program, through the iCODE project funded by the IDEX Paris-Saclay, ANR-11-IDEX-0003-02. The first author is supported by a public grant as part of the FMJH.

\section{Preliminaries}\label{sec:prelim}

On $\bR^{2n+1}$ we denote coordinates by $(\xi,z)\in \bR^{2n}\times\bR$, $\xi=(\xi_1,\ldots,\xi_n)$, $\xi_i=(x_i,y_i)$. The $n$-th Heisenberg group $\H^n$ is $\R^{2n+1}$ endowed with the sub-Riemannian structure with orthonormal frame \eqref{eq:XY}, satisfying the commutation relations $[X_i,Y_i]=\partial_z$, and $[X_i,Y_j]=0$ if $i\neq j$. 
These are left-invariant vector fields w.r.t.\ to the non-commutative group law
\begin{equation}\label{eq:group}
  (\xi,z)\star (\xi',z') = \left(\xi+\xi', z+z'+\frac{1}{2} \langle \xi,\tilde J\xi'\rangle_{\bR^{2n}}\right),
  \quad     
  \text{where}
  \quad
  J= \left(
    \begin{array}{cc}
      0 & 1\\
      -1 & 0
    \end{array}
  \right),
\end{equation}
and, for any matrix $M\in \cL(\bR^2)$, we let $\tilde M\in \cL(\bR^{2n})$ be the corresponding block diagonal operator, i.e., $\tilde M=\bigoplus_{j=1}^n M$.
The center of the group is $\cZ=\{\xi=0\}$.
Moreover, the sub-Riemannian distribution $\distr = \operatorname{span}\{X_1,\ldots,X_n,Y_1,\ldots,Y_n\}$, is characterized by
\begin{equation}\label{eq:distr}
  \distr_{(\xi,z)} = \ker \left( dz -\frac12 \langle \xi,J d\xi\rangle_{\bR^{2n}} \right).
\end{equation}
For any point $p\in\H^n$,  any vector $v\in \distr_p\subset T_p\bH^n\simeq\bR^{2n+1}$ can be written as $v=(v',v_z)\in \R^{2n}\times \R$. Then, the sub-Riemannian metric is
\begin{equation}
  \langle v,w\rangle_{\bH^n} = \langle v',w'\rangle_{\bR^{2n}}, \qquad \forall v,w\in T_p\H^n, \, p\in\H^n.
\end{equation}
We denote the associated norm by $\|v\|_{\H^n}:=\sqrt{\langle v,v\rangle_{\H^n}}$.

We say that a curve $\gamma:[0,1]\to \H^n$ is \emph{horizontal} if it is absolutely continuous and $\dot\gamma(t)\in\distr_{\gamma(t)}\text{ for a.e.\ } t\in [0,1]$.
The Carnot-Cathéodory distance of $p\in\H^n$ from the origin is then defined as
\begin{equation}
  \delta(p) := \inf\left\{ \int_0^1\|\dot\gamma(t)\|_{\H^n}\,dt \:\bigg|\: \gamma:[0,1]\to\H^n \text{ is horizontal, } \gamma(0)=0\text{, and } \gamma(1)=p \right\}.
\end{equation}
The associated balls are denoted by $B_\varepsilon = \{\delta<\varepsilon\}$.

We now identify a horizontal vector field that plays the role of the ``polar'' vector field $t^{-1}\partial_\varphi$ in Euclidean polar coordinates $(t,\varpi, \varphi)\in \R_+\times \S^{d-2} \times (-\pi/2,\pi/2)\mapsto t(\varpi\cos\varphi,\sin\varphi)\in\R^d$. (See Appendix~\ref{a:euclidean}.) 

\begin{defn}\label{d:polar}
  The \emph{polar vector field} $\ndperp\in \Gamma(\H^1\setminus\cZ)$ is the only unit smooth horizontal vector field orthogonal to $\nd$ and to the generators of rotations around $\cZ$, that is upward pointing on the plane $\{z=0\}$.
\end{defn}

The precise expression in a particular set of coordinates of $\ndperp$ is given in Proposition~\ref{p:ortho}.

\subsection{Polar-like coordinates}
In this section, we introduce suitable polar-like coordinates that will be instrumental in proving our results. 

To present the idea behind these coordinates, let us discuss briefly the case $n=1$. In this setting, for any point $p\in \H^1\setminus\mathcal Z$ there exists a unique length-minimizing geodesic connecting the origin to $p$. The family of all these geodesics depends on two parameters $p_z\in\R$, $ \theta\in\S^1$, corresponding to the curves
\begin{equation}\label{eq:22}
  \gamma_{\theta,p_z}(t)=\left(
    \frac{\cos(tp_z-\theta)-\cos\theta}{p_z},
    \frac{\sin(tp_z-\theta)+\sin\theta}{p_z},
    \frac{tp_z-\sin(tp_z)}{p_z^2}      
\right), \quad t\in (0,2\pi/p_z).
\end{equation}
By definition, it holds that $\dot\gamma_{\theta,p_z}(t)=\nH\delta(\gamma_{\theta,p_z}(t))$ for $t\in (0,2\pi/p_z)$, and one could use \eqref{eq:22} to define polar coordinates $\Psi: \{(t,p_z)\mid 0<tp_z<2\pi\}\times \S^1\to \H^1\setminus\mathcal Z$ by $\Psi(t,\theta,p_z):=\gamma_{\theta,p_z}(t)$. However, this yields a transformation whose jacobian has a complicated form and, moreover, that does not behave well under the anisotropic  dilations $\{\varrho_\lambda\}_{\lambda>0}$. Indeed, although the distance is $1$-homogeneous, geodesics are not invariant under dilations and one can check that $\varrho_\lambda(\Psi(t,\theta,p))=\Psi(\lambda t, \theta,\lambda^{-1}p)$.
For these reasons, we consider a rescaled version of $\Psi$, letting $r=tp_z$. (See Figure~\ref{fig:Phi}.) This yields the map $\Phi:\R_+\times\S^1\times(-2\pi,2\pi)\to \H^1\setminus\mathcal Z$ defined by
\begin{equation}
  \Phi(t,\theta,r):=\left( t\frac { \cos(r-\theta)-\cos\theta}r , t\frac { \sin(r-\theta)+\sin\theta}r , 
    t^2\frac{r-\sin r}{r^2}
  \right).
\end{equation}

\begin{figure}
  \begin{tikzpicture}
  \pgfplotsset{every axis/.append style={
                    axis x line=middle,    
                    axis y line=middle,    
                    axis line style={->}, 
                    xlabel={$|\xi|$},          
                    ylabel={$z$},          
                    xtick distance = {.25},
                    xticklabels = \empty,
                    ytick distance = {0.25},
                    yticklabels = \empty,
            }}
    \begin{axis}[xmin=-.2, xmax=1.2, ymin=-.5, ymax=.5, variable=\x]
            \addplot [domain=-6.28:6.28,samples=60]({2/(\x^2)*(1-cos(\x r))},{(\x-sin(\x r))/(\x^2)});
            \addplot [domain=0:.7,smooth, style={dashed}]({\x},{\x^2});
    \end{axis}
    \draw[style = {->, thick}] (3.7,4.6) -- (3.7-.65,4.6+.15*.65) node[above] {$\Xi$}; 
    \draw[style = {->, thick}] (3.7,4.6) -- (3.7+.15*.65,4.6+.65) node[below right] {$\nH\delta$}; 
    \draw[style={->}] (2.6,3.8) -- node[above left] {$t$} (2.6+.35,3.8+.35) ;
    \draw[style={->}] (5.8,3.8) -- node[above right] {$r$} (5.8-.35,3.8+.35);
    \draw (5,1) node {$\partial B_1$};
\end{tikzpicture}

\caption{Graphical depiction of  $(r,t)$ in $\Phi$ coordinates. Notice that $r$ parametrizes the position on the Carnot-Carath\'eodory unit sphere, up to rotations around the center, while $t$ encodes the distance from the origin. Nevertheless, $\partial_t$ is the dilation vector field, and not $\nH\delta$.}
\label{fig:Phi}
\end{figure}
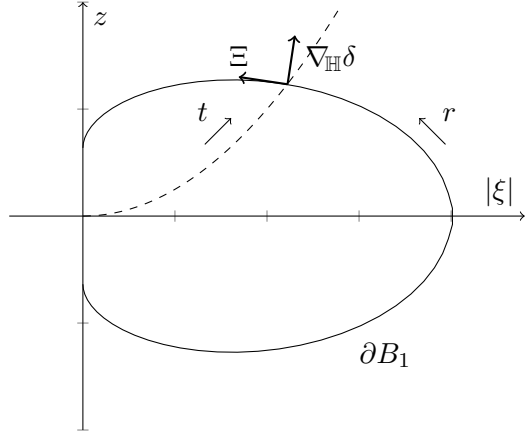

In the general case $n\ge 1$, we denote points on the sphere $\varpi\in\bS^{2n-1}\subset\bR^{2n}$ as $\varpi=(\varpi_1,\ldots,\varpi_n)$, $\varpi_i=(\varpi_i^1,\varpi_i^2)\in \bR^2$, and let $R_r$ be the clockwise rotation of angle $r\in\R$. That is,
\begin{equation}
R_r=\begin{pmatrix}
\cos{r}&-\sin{r}\\
\sin{r}&\cos{r}
\end{pmatrix}.
\end{equation}  
Observe that, in particular, $J=R_{-\pi/2}$.

Then, considering the optimal synthesis for $\H^n$ (see, e.g., \cite{Monroy-Perez1999, Biggs2016}) leads to the following. 

\begin{defn}\label{def:phi-coord}
 Consider $U=\bR_+\times\bS^{2n-1}\times (-2\pi,2\pi)$. Then, the diffeomorphism $\Phi:U\to \H^n\setminus\mathcal Z$ is defined by $\Phi(t,\varpi,r) = (\xi,z),$ where
\begin{equation}
  \xi_i = \frac tr A\,\varpi_i, \quad z = t^2\frac{r-\sin r}{2r^2}, 
  \quad 
  A = -J(\idty-R_r)= \left(
    \begin{array}{cc}
      \sin r & -1+\cos r\\
      1-\cos r & \sin r
    \end{array}
  \right).
\end{equation}
\end{defn}

Henceforth, for any $\varpi\in \bS^{2n-1}$, we represent $T_\varpi \bS^{2n-1} = \varpi^\perp\subset\bR^{2n}$. When clear from the context, we identify vectors $\Theta\in T\bS^{2n-1}$ with $(0,\Theta,0)\in TU$. Thus, $TU\simeq\{(v_t,v_\varpi,v_r)\in\bR^{2n+2}\mid v_\varpi\perp\varpi\}\simeq \bR^{2n+1}$. Moreover, we denote by $d\varpi$ the standard volume measure on $\mathbb{S}^{2n-1}$. Namely, $d\varpi$ stands for  the $(2n-1)$-form 
\begin{equation}\label{eq:Omega}
  \frac{1}{2n}\sum_{k=1}^n \sum_{\ell = 1}^2 (-1)^{\ell-1} \varpi_k^\ell d\varpi_{1}^1\wedge \ldots \wedge \widehat{d\varpi_k^\ell} \wedge\ldots \wedge d\varpi_{n}^2,
\end{equation}
where the hat denotes a missing element.

The next proposition collects some basic facts on $\Phi$, following via direct computations from the explicit optimal synthesis of geodesics in $\H^n$ and \eqref{eq:distr} .

\begin{prop}\label{prop:basic}
  The following hold.
  \begin{enumerate}
    \item[i.] Let $(\xi,z)=\Phi(t,\varpi,r)$. Then, $t = \delta(p)$, i.e., $(\varpi,r)\mapsto \Phi(t,\varpi,r)$ is a parametrization of $\partial B_t\setminus\cZ$. Moreover,
    \begin{equation}\label{eq:psi}
      \frac{|\xi|^2}{z} = \phi(r) := 4\frac{1-\cos r}{r-\sin r}.
    \end{equation}
    \item[ii.] Letting $A'=\partial_rA=R_r$, we have
    \begin{equation}
    D\Phi(t,\varpi,r) =
    \left(
    \begin{array}{ccc}
       \dfrac1r \tilde A\varpi & \dfrac tr \tilde A & -\dfrac t{r^2}\tilde A\varpi+\frac{t}{r}\tilde A'\varpi\\
       t \dfrac{r-\sin r}{r^2} & 0 & -\frac{t^2}{r^2} \frac{r-2 \sin r+r \cos r}{2 r}
    \end{array}
    \right).
  \end{equation}  
    \item[iii.] We have that $\Phi^ *\cL^3 = t^{2n+1}\mu(r)\,dt\,d\varpi\,dr$, where
    \begin{equation}
      \mu(r) = \frac{\left(2-2\cos r-r\sin r\right)(2-2\cos r)^{n-1}}{r^{2n+2}}.
    \end{equation}
  \item[iv.]   We have
  \begin{equation}\label{eq:char_distr}
    \distr_{\Phi(t,\varpi,r)}=\left\{ (v',v_z)\in T_{\Phi(t,\varpi,r)}\bH^n\simeq \bR^{2n}\times\bR \bigg|\: v_z = \frac12\frac tr \left\langle \tilde A\varpi, \tilde Jv' \right\rangle_{\bR^{2n}}  \right\}.
  \end{equation}
  \end{enumerate}
\end{prop}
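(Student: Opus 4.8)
The plan is to establish the four items in turn; all are direct computations, the only non-elementary ingredient being the explicit optimal synthesis of geodesics issued from the origin, which underlies the definition of $\Phi$ and its being a diffeomorphism onto $\H^n\setminus\cZ$. For item~i, I would first record the matrix identity $A^\top A = (2-2\cos r)\,\idty_2$, obtained by multiplying out the $2\times2$ matrix of Definition~\ref{def:phi-coord}; it gives $|\xi|^2 = (t^2/r^2)\sum_i |A\varpi_i|^2 = (2-2\cos r)\,t^2/r^2$ since $|\varpi|=1$, and dividing by $z = t^2(r-\sin r)/(2r^2)$ yields $|\xi|^2/z = 2(2-2\cos r)/(r-\sin r) = \phi(r)$, which is \eqref{eq:psi}. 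For the identity $\delta(\Phi(t,\varpi,r))=t$: by the construction of $\Phi$ from the optimal synthesis (compare the case $n=1$ in \eqref{eq:22}), for fixed $\varpi$ and fixed vertical momentum $p_z\neq0$ the curve $s\mapsto\Phi(s,\varpi,s p_z)$ is the unit-speed normal geodesic from the origin with that momentum, which is the unique length-minimizer up to its cut time $2\pi/|p_z|$; taking $p_z=r/t$, so that $s p_z|_{s=t}=r\in(-2\pi,2\pi)$ lies strictly below the cut value, and evaluating at $s=t$ gives $\delta(\Phi(t,\varpi,r))=t$ (the case $r=0$ being the Euclidean segment in $\{z=0\}$). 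Since $\Phi$ is a diffeomorphism onto $\H^n\setminus\cZ$ and $t=\delta\circ\Phi$, fixing $t$ then gives a parametrization of $\partial B_t\setminus\cZ$.

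Items~ii and iv are routine. Item~ii is termwise differentiation of $\xi_i=(t/r)A\varpi_i$ and $z=t^2(r-\sin r)/(2r^2)$ with respect to $t$, $\varpi$, $r$, using $\partial_r A=R_r$ and the quotient rule for $\partial_r\bigl((r-\sin r)/r^2\bigr)$; the only care is the bookkeeping of the block-diagonal operators $\tilde A,\tilde A'$. Item~iv is immediate from \eqref{eq:distr}: $(v',v_z)\in\distr_{(\xi,z)}$ iff $v_z=\frac12\langle\xi,\tilde J v'\rangle$, and substituting $\xi=(t/r)\tilde A\varpi$ gives exactly \eqref{eq:char_distr}.

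Item~iii is the genuine computation, and the plan is to avoid expanding the $(2n+1)\times(2n+1)$ Jacobian of $D\Phi$ directly, by factoring $\Phi$ through polar coordinates in the $\xi$-variable. The key is the matrix identity $A=2\sin(r/2)\,R_{r/2}$ (valid for all $r$), which gives $\xi=\rho(t,r)\,\tilde R_{r/2}\varpi$ with $\rho(t,r)=2t\sin(r/2)/r=|\xi|>0$, while $z=t^2(r-\sin r)/(2r^2)$ depends on $(t,r)$ only. Thus $\Phi$ is the composition of $(t,\varpi,r)\mapsto(\rho,\tilde R_{r/2}\varpi,z)$ with the polar map $(\rho,\omega,z)\mapsto(\rho\,\omega,z)$, the latter having Jacobian $\rho^{2n-1}$. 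The former has Jacobian $\partial_t\rho\,\partial_r z-\partial_r\rho\,\partial_t z$: in an orthonormal frame of $T\bS^{2n-1}$ adapted via $\tilde R_{r/2}$ the spherical block is the identity, $\partial_t(\tilde R_{r/2}\varpi)=0$, and the $dr$-contribution in the spherical block is removed by column operations without touching the $t$- and $z$-rows. Hence $\Phi$ pulls back the Lebesgue measure to $\rho^{2n-1}\bigl|\partial_t\rho\,\partial_r z-\partial_r\rho\,\partial_t z\bigr|\,dt\,d\varpi\,dr$. A short computation gives $\partial_t\rho\,\partial_r z-\partial_r\rho\,\partial_t z=t^2(2\sin(r/2)-r\cos(r/2))/r^3$, so $\rho^{2n-1}(\cdots)=t^{2n+1}(2\sin(r/2))^{2n-1}(2\sin(r/2)-r\cos(r/2))/r^{2n+2}$; using $2\sin(r/2)\bigl(2\sin(r/2)-r\cos(r/2)\bigr)=(2-2\cos r)-r\sin r$ and $4\sin^2(r/2)=2-2\cos r$ this rearranges to $t^{2n+1}\mu(r)$ with $\mu$ as in the statement.

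The main obstacle is confined to item~iii, and it is organizational rather than conceptual: the polar factorization must be set up so that the $(2n-1)$ spherical directions genuinely decouple, making the $(2n+1)\times(2n+1)$ Jacobian collapse to the $2\times2$ minor in $(t,r)$. Once this is in place, the trigonometric simplification producing $\mu(r)$ is routine, and items~i, ii, iv present no difficulty beyond careful bookkeeping.
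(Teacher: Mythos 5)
Your proposal is correct: the paper omits the proof of this proposition entirely, stating only that it follows ``via direct computations from the explicit optimal synthesis of geodesics in $\H^n$ and \eqref{eq:distr}'', and your argument is exactly that, with all the key identities checking out ($A^\top A=(2-2\cos r)\idty$, $A=2\sin(r/2)R_{r/2}$, the $2\times2$ minor $\partial_t\rho\,\partial_rz-\partial_r\rho\,\partial_tz=t^2(2\sin(r/2)-r\cos(r/2))/r^3$, and the trigonometric rearrangement to $\mu$). The polar factorization of item~iii through $(\rho,\tilde R_{r/2}\varpi,z)$, using that $\tilde R_{r/2}$ is an isometry of $\bS^{2n-1}$ and that the spherical columns have no $\rho$- or $z$-components, is a clean way to collapse the full Jacobian and is consistent with (though not dictated by) the paper's stated approach.
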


Henceforth, we let $\{W_3,\ldots,W_{2n}\}\subset \Gamma(\bS^{2n-1})$ be a fixed orthonormal frame for the distribution on $\bS^{2n-1}$ given by $\varpi\mapsto \varpi^\perp\cap (\tilde J\varpi)^\perp\subset T_\varpi\bS^{2n-1}$. 
The following proposition presents the orthonormal frame that is central to all the results of this paper. 

\begin{prop}\label{p:ortho}
%
  An orthonormal frame for $\Phi^*\distr$ is given by $\{V_1,\ldots,V_{2n}\}$, with
  \begin{gather}
    V_1 =\Phi^*(\nH\delta) = \left(1,0,\frac rt \right),\qquad
    V_2 = \Phi^*(\ndperp)=\left(0, \frac rt v(r)\tilde J\varpi,\frac rt w(r)\right),\\
    V_j = \left(0,\frac{|r|}{t\sqrt{2(1-\cos r)}}W_j,0\right), \qquad j=3,\ldots,2n.
  \end{gather}
  Here,
  \begin{equation}
    w(r) = \frac{r}{2-r \cot \left(\frac{r}{2}\right)},
    \qquad 
    v(r)= \frac{r-\sin r}{2-r \sin r-2 \cos r}.
  \end{equation}
\end{prop}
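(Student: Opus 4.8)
The plan is to verify Proposition~\ref{p:ortho} by a direct computation, using the explicit formula for $D\Phi$ from Proposition~\ref{prop:basic}(ii) together with the characterization \eqref{eq:char_distr} of the distribution pulled back by $\Phi$. Concretely, since $\Phi$ is a diffeomorphism onto $\bH^n\setminus\cZ$, pulling back the horizontal distribution amounts to describing $\Phi^*\distr \subset TU$; by part (iv) this is the hyperplane $\{(v_t,v_\varpi,v_r): v_z = \frac12\frac tr\langle\tilde A\varpi,\tilde J v'\rangle\}$ once we re-express $v' = v_t\cdot\frac1r\tilde A\varpi + \frac tr\tilde A v_\varpi + (\text{the }v_r\text{-column})$ via $D\Phi$. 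So the first step is: take each candidate vector $V_1,V_2,V_j$ (written in $(v_t,v_\varpi,v_r)$ coordinates on $U$), push it forward by $D\Phi(t,\varpi,r)$, and check that the image lies in $\distr$. Equivalently, and more cleanly, one can check directly that each $V_k$ satisfies the linear constraint defining $\Phi^*\distr$. For $V_1 = (1,0,r/t)$ this should be essentially the statement that $\nH\delta$ is horizontal with the geodesic interpretation $\dot\gamma = \nH\delta$ along geodesics; for the $V_j$, $j\ge 3$, the constraint collapses because $W_j\perp\tilde J\varpi$ by construction of the frame $\{W_j\}$, killing the right-hand side of \eqref{eq:char_distr}.

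The second step is to verify orthonormality and mutual orthogonality in the pulled-back metric, i.e., with respect to $\langle v,w\rangle_{\bH^n} = \langle v',w'\rangle_{\bR^{2n}}$ computed after applying $D\Phi$. For $V_j$, $j\ge 3$, this is immediate: their images under $D\Phi$ have $v'$-component $\frac tr\tilde A\cdot\frac{|r|}{t\sqrt{2(1-\cos r)}}W_j = \frac{\operatorname{sgn}(r)}{\sqrt{2(1-\cos r)}}\tilde A W_j$, and since $\tilde A = -\tilde J(\idty - \tilde R_r)$ one computes $\tilde A^\top\tilde A = (\idty-\tilde R_r)^\top(\idty-\tilde R_r) = 2(1-\cos r)\idty$, so the normalization is exactly right and orthogonality among the $W_j$ transfers. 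The real content is the computation for $V_1$ and $V_2$: one must compute the $v'$-components of $D\Phi\,V_1$ and $D\Phi\,V_2$, check each has unit length, that they are orthogonal to each other, and orthogonal to the $\tilde A W_j$ directions — the last point again using $\varpi, \tilde J\varpi \perp W_j$ together with $\tilde A^\top\tilde A \propto \idty$. The functions $v(r)$ and $w(r)$ are then forced: $w(r)$ is determined by requiring $V_2$ to land in the distribution (the $v_z$ vs. $v'$ relation), and $v(r)$ by requiring $\|V_2\| = 1$; alternatively $v(r)$ is fixed by orthogonality of $V_2$ to $V_1$. I would present it by first deriving the constraint that makes $V_2$ horizontal, solving for $w$, then imposing either normalization or orthogonality to get $v$, and finally checking the remaining conditions are automatically satisfied.

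The third step is to match the geometric characterization in Definition~\ref{d:polar}: one needs $V_2 = \Phi^*(\ndperp)$ where $\ndperp$ is characterized as the unit horizontal field orthogonal to $\nH\delta$ and to the generators of rotations about $\cZ$, upward-pointing on $\{z=0\}$. Orthogonality to $\nH\delta$ is exactly orthogonality to $V_1$, already checked. Orthogonality to the rotation generators: the infinitesimal rotations about the center $\cZ$ act on the $\xi$-variable; in $\Phi$-coordinates these correspond to the directions $\tilde J\varpi$-rotations... more precisely the generator of the rotation $R_s$ acting simultaneously on all $\xi_i$ corresponds in $(t,\varpi,r)$-coordinates to moving $\varpi$ by $\tilde J\varpi$ (a tangent vector to $\bS^{2n-1}$), whose pushforward has $v'$-component proportional to $\tilde A\tilde J\varpi$; one checks $\langle \tilde A\tilde J\varpi,\tilde A\tilde J\varpi\rangle$ and $\langle(\text{image of }V_2)',\tilde A\tilde J\varpi\rangle$ and finds the latter vanishes precisely by the choice of $v(r)$ (or it is forced). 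Finally, the sign/upward-pointing normalization on $\{z=0\}$ fixes the sign of $v(r)$ and $w(r)$: note $z = 0$ corresponds to $r\to 0$, and one checks $\lim_{r\to 0} w(r)$ and the vertical component to confirm $V_2$ points in the $+z$ direction there.

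The main obstacle I expect is purely computational bookkeeping: correctly handling the block-diagonal operators $\tilde A, \tilde J, \tilde R_r$, the identity $\tilde A^\top\tilde A = 2(1-\cos r)\idty$ and the related $\tilde A^\top\tilde J\tilde A$-type contractions, and then simplifying the trigonometric expressions that arise in $v(r)$ and $w(r)$ — in particular rewriting things in terms of $\cot(r/2)$ as in the statement, which requires the half-angle identities $1-\cos r = 2\sin^2(r/2)$ and $\sin r = 2\sin(r/2)\cos(r/2)$. There is no conceptual difficulty once the constraint \eqref{eq:char_distr} and the metric formula are in hand; the proof is a verification, and the only place where one could go wrong is sign conventions (the clockwise rotation $R_r$, the sign of $J$, the orientation of $\ndperp$ on $\{z=0\}$). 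I would organize the writeup to isolate the algebraic lemma $\tilde A^\top\tilde A = 2(1-\cos r)\idty$ first, so that all three orthonormality checks reduce to short lines.
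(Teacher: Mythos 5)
Your steps 1 and 2 follow the paper's route essentially verbatim: the paper also verifies horizontality through the constraint \eqref{eq:char_distr} (organized via Lemma~\ref{lem:non-ortho}, where the horizontality of $\bar V_2=(0,\tfrac{\alpha}{t}\tilde J\varpi,\tfrac{\beta}{t})$ pins down the \emph{ratio} $\alpha/\beta=\tfrac{r-\sin r}{r(1-\cos r)}$ — note that horizontality alone does not determine $w$, only this ratio; the overall scale comes from normalization), and the orthonormality checks rest on exactly the identities you isolate, $A^{\top}A=2(1-\cos r)\idty$, $A^{\top}A'=A$, and $\tilde A\varpi\in\operatorname{span}\{\varpi,\tilde J\varpi\}$ against $W_j\perp\{\varpi,\tilde J\varpi\}$.

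The genuine problem is in your step 3. The generator of the diagonal rotation $e^{s\tilde J}$ at $p=\Phi(t,\varpi,r)$ is $(\tilde J\xi,0)=\bigl(\tfrac tr\tilde A\tilde J\varpi,0\bigr)$, i.e.\ the pushforward of $(0,\tilde J\varpi,0)$, exactly as you say — but this vector is \emph{not horizontal} (by \eqref{eq:char_distr} its would-be $v_z$ is $-\tfrac12|\xi|^2\neq0$), and $\Phi_*V_2$ is \emph{not} orthogonal to it. Indeed, writing $(\Phi_*\bar V_2)'=\tfrac{\alpha}{r}\tilde A\tilde J\varpi+\tfrac{\beta}{r}\bigl(\tilde A'-\tfrac1r\tilde A\bigr)\varpi$ and using $A'^{\top}AJ=R_{-r}-\idty$, one finds
\begin{equation*}
\bigl\langle (\Phi_*\bar V_2)',\tilde A\tilde J\varpi\bigr\rangle_{\bR^{2n}}=\frac{(1-\cos r)(2\alpha-\beta)}{r},
\end{equation*}
which vanishes only if $\alpha/\beta=1/2$, incompatible with the horizontality constraint $\alpha/\beta=\tfrac{r-\sin r}{r(1-\cos r)}$ (which tends to $1/3$ as $r\to0$). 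So the check you propose fails for every admissible choice of $v$; it is not ``forced.'' The resolution, and what the paper actually does, is that the only rotation generators a unit \emph{horizontal} field can meaningfully be required to be orthogonal to are the horizontal ones: at each point the span of $\{(\tilde M\xi,0):M\in\mathfrak u(n)\}$ intersected with $\distr_p$ is the $(2n-2)$-dimensional space $\operatorname{span}\{\Phi_*V_3,\ldots,\Phi_*V_{2n}\}$ (precisely the directions $W\perp\{\varpi,\tilde J\varpi\}$). Orthogonality of $V_2$ to these is already contained in the orthonormality of the frame from step 2, so no new condition on $v(r)$ arises; $V_2$ is then the unique remaining unit horizontal direction orthogonal to $\nH\delta$ and to that span, and only the sign remains to be fixed by the limit $\lim_{r\to0}dz(\Phi_*V_2)=t/2>0$, which you do have. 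As written, your step 3 would lead you to conclude (incorrectly) that no admissible $v$ exists, so this part needs to be replaced by the dimension-count argument above.
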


\begin{rem}\label{rmk:rwm}
  All the vector fields above are well defined for $r=0$ and smooth on $U$. In particular,
  \begin{equation}
    \lim_{r\to 0} \frac16\frac{r}{t}w(r) = \lim_{r\to 0} \frac12\frac{r}{t}v(r) = \lim_{r\to 0}\frac{|r|}{t\sqrt{2(1-\cos r)}} =\frac1t, 
  \end{equation}
  Also, we point out that both $v$  and $w$ are odd functions, positive for $r>0$, which explode when $r\to 0$, see Figure \ref{fig:vw}.
\begin{figure}
\includegraphics[scale=.5]{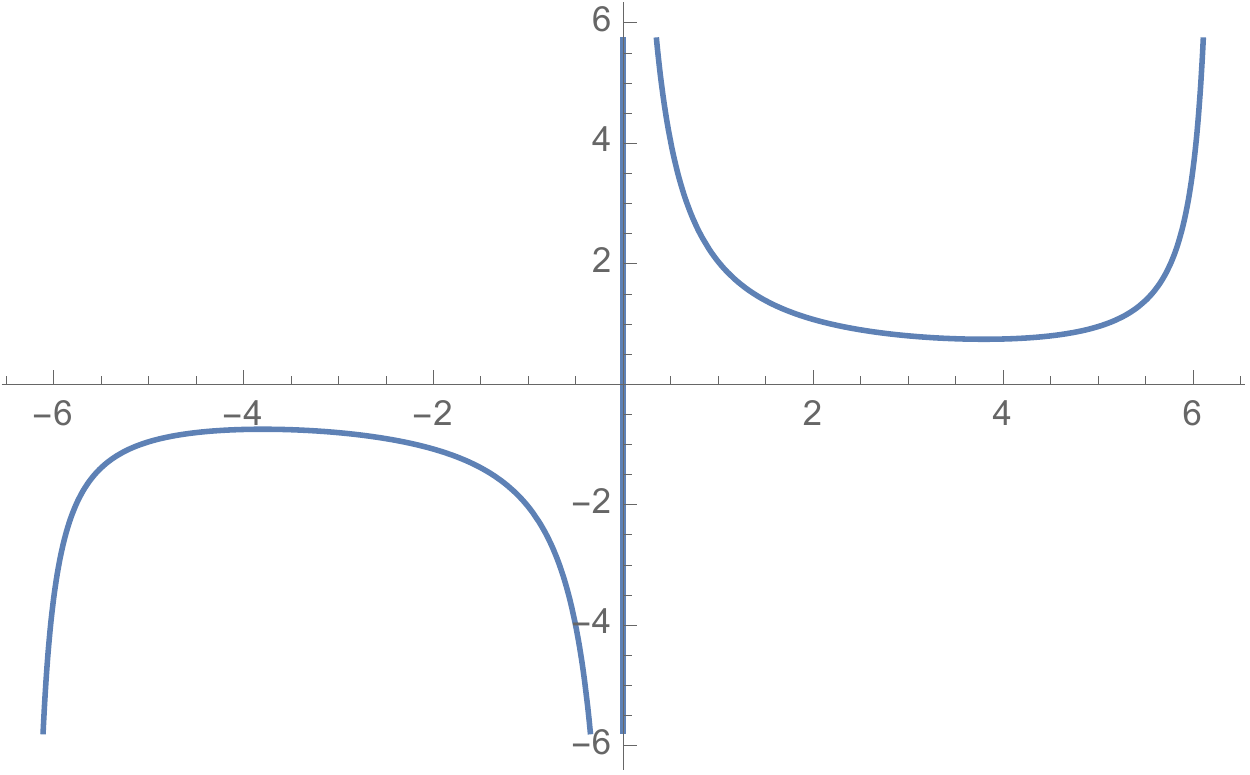} \hspace{1cm} \includegraphics[scale=.5]{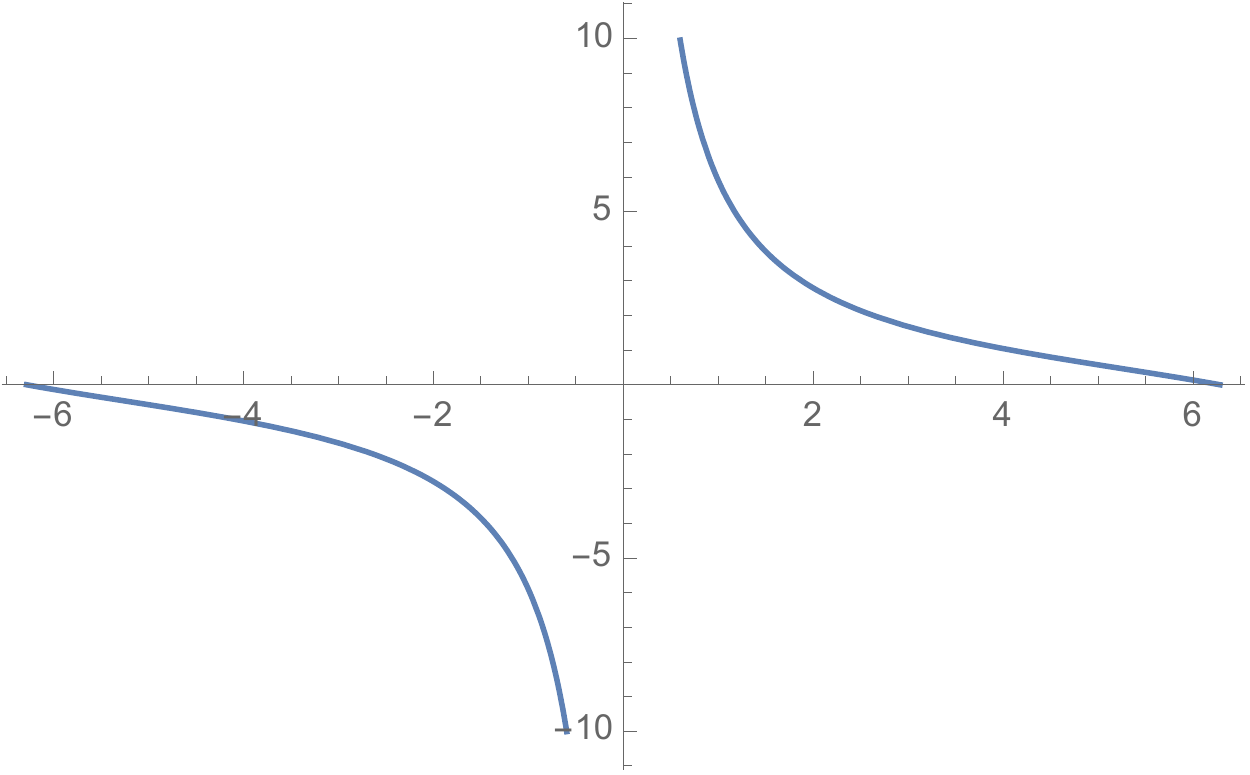}
  \caption{Graphs of the functions $v$ (left) and $w$ (right), defined in Proposition~\ref{p:ortho}.}
  \label{fig:vw}
\end{figure}
\end{rem}

\begin{rem}\label{p:rwm}
  Several important properties of this frame are connected with the following relation, which can be directly checked from the definitions of $w$ and $\mu$:
  \begin{equation}\label{eq:rwm}
    \partial_r(r w \mu) = -nr\mu.
  \end{equation}
\end{rem}

In order to prove Proposition~\ref{p:ortho}, we need the following two preliminary Lemmas.

\begin{lem}\label{lem:pullback-nabla-delta}
  We have
  \begin{equation}
    \phi^*(\nH\delta)|_{\Phi(t,\varpi,r)} = \partial_t + \frac{r}{t}\partial_r.
  \end{equation}
\end{lem}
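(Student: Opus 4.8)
The plan is to reduce the statement to the identity $\Phi^*(\nH\delta)|_{\Phi(t,\varpi,r)} = (1,0,r/t)$, push the right-hand side forward by $D\Phi$, recognise the result as a \emph{horizontal unit} vector on which $d\delta$ evaluates to $1$, and then invoke Cauchy--Schwarz together with the eikonal inequality to conclude that it must be $\nH\delta$. Conceptually the reason is that, for fixed $(\varpi,r)$, the coordinate ray $s\mapsto\Phi(s,\varpi,(r/t)s)$ is the arclength-parametrised minimising geodesic from the origin to $\Phi(t,\varpi,r)$ --- this is exactly the rescaling $r=tp_z$ performed before Definition~\ref{def:phi-coord} --- and $\nH\delta$ at a point is the terminal velocity of the minimiser reaching it. I will implement this using only Proposition~\ref{prop:basic} and standard properties of $\delta$, so as not to re-derive the optimal synthesis.

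First I will compute $w := D\Phi_{(t,\varpi,r)}\bigl(\partial_t + (r/t)\partial_r\bigr)\in T_{\Phi(t,\varpi,r)}\H^n$, which by Proposition~\ref{prop:basic}(ii) is the first column of $D\Phi$ plus $r/t$ times the third column. Using $A=-J(\idty-R_r)$, $A'=R_r$ and $\langle\varpi,\tilde R_r\varpi\rangle_{\bR^{2n}}=\cos r$ (recall $|\varpi|=1$), a short computation gives $w = \bigl(\tilde R_r\varpi,\ \tfrac{t}{2r}(1-\cos r)\bigr)$. Next I will check that $w$ is horizontal, i.e.\ that its components satisfy the constraint in \eqref{eq:char_distr}: this amounts to the identity $\tfrac{t}{2r}(1-\cos r) = \tfrac12\tfrac{t}{r}\langle\tilde A\varpi,\tilde J\tilde R_r\varpi\rangle_{\bR^{2n}}$, which follows again from $A=-J(\idty-R_r)$ and the orthogonality of $\tilde J$. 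Finally, $\|w\|_{\H^n} = |\tilde R_r\varpi|_{\bR^{2n}} = |\varpi| = 1$, so $w$ is a horizontal unit vector at $\Phi(t,\varpi,r)$.

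To conclude, recall from Proposition~\ref{prop:basic}(i) that $\delta = t\circ\Phi^{-1}$ on $\H^n\setminus\cZ$; hence $d\delta(w) = dt\bigl(\partial_t + (r/t)\partial_r\bigr) = 1$. Since $\delta$ is $1$-Lipschitz for the Carnot--Carathéodory metric and smooth on $\H^n\setminus\cZ$, one has $\|\nH\delta\|_{\H^n}\le 1$ there; and since $\nH\delta$ represents $d\delta|_{\distr}$ while $w$ is horizontal,
\[
  1 = d\delta(w) = \langle\nH\delta,w\rangle_{\H^n} \le \|\nH\delta\|_{\H^n}\,\|w\|_{\H^n} = \|\nH\delta\|_{\H^n}\le 1 .
\]
Thus all the inequalities are equalities, and equality in Cauchy--Schwarz with $\|w\|_{\H^n}=1$ forces $\nH\delta = w$ at $\Phi(t,\varpi,r)$, that is $\Phi^*(\nH\delta)|_{\Phi(t,\varpi,r)} = \partial_t + (r/t)\partial_r$.

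The only non-formal ingredient is the eikonal bound $\|\nH\delta\|_{\H^n}\le 1$ together with smoothness of $\delta$ off the centre; both are classical facts about the Carnot--Carathéodory distance, already tacitly used in the paper (e.g.\ in Definition~\ref{d:polar}). I expect the main work to be the matrix manipulation producing $w$ and the verification of \eqref{eq:char_distr}, but these are entirely routine given Proposition~\ref{prop:basic}. A variant that avoids the eikonal input would instead invoke the optimal-synthesis description of geodesics directly --- checking that $s\mapsto\Phi(s,\varpi,(r/t)s)$ has unit horizontal speed and length $t$, hence is a minimiser, and that $\nH\delta$ is its endpoint velocity --- but this is essentially the same computation and no shorter.
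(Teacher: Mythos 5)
Your proposal is correct, and the computations it rests on are exactly right: $D\Phi\bigl(\partial_t+\tfrac rt\partial_r\bigr)=\bigl(\tilde R_r\varpi,\tfrac{t}{2r}(1-\cos r)\bigr)$, this vector satisfies the constraint \eqref{eq:char_distr}, and it has unit norm. Where you differ from the paper is in how the identification of this pushforward with $\nH\delta$ is justified. The paper invokes the optimal synthesis directly: the curves $t\mapsto\Phi(t,\varpi,tp_z)$ are arc-length parametrized minimizers from the origin, and $\nH\delta$ at a point is the terminal velocity of the minimizer reaching it; this yields the explicit formula $\nH\delta=\bigl(\tilde A'\varpi,\tfrac{t}{r}\tfrac12(1-\cos r)\bigr)$, after which the lemma is a direct comparison with $\Phi_*\partial_t$ and $\Phi_*\partial_r$. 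You instead characterize $\nH\delta$ abstractly as the unique unit horizontal vector $w$ with $d\delta(w)=1$, using $\delta\circ\Phi=t$ from Proposition~\ref{prop:basic}(i), the eikonal bound $\|\nH\delta\|_{\H^n}\le 1$, and equality in Cauchy--Schwarz. Both routes share the same matrix manipulation; yours trades the optimal-synthesis input for the (equally classical, but logically lighter) facts that $\delta$ is $1$-Lipschitz and smooth on $\H^n\setminus\cZ$, which the paper uses tacitly anyway. This is a clean and self-contained alternative; the only point worth flagging explicitly in a final write-up is the smoothness of $\delta$ away from the center, since without differentiability the Cauchy--Schwarz step has nothing to apply to.
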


\begin{proof}
Exploiting the fact that, for fixed $\varpi\in \bS^{2n-1}$ and $p_z\in \bR$, the curves $t\mapsto\Phi(t,\varpi,t p_z)$ are arc-length parametrized geodesics for $0<|t p_z|<2\pi$, one deduces that
  \begin{equation}\label{eq:nabla-delta}
    \nH\delta|_{\Phi(t,\varpi,r)} = \left( \tilde A'\varpi, \frac{t}{r}\frac{1}{2}(1-\cos{r})\right)\in\R^{2n}\times\R.
  \end{equation}

  Observe that
  \begin{gather}
    \left.\Phi_*\partial_t\right|_{\Phi(t,\varpi,r)} = \left(\frac{\tilde A}{r} {\varpi}, t \frac{r-\sin r}{r^2}\right)\in\R^{2n}\times\R,
    \\
    \left.\Phi_*\partial_r\right|_{\Phi(t,\varpi,r)} = \left(\frac{t}r\left({\tilde A'}-\frac{\tilde A}{r}\right) \varpi, -\frac{t^2}{r^2} \frac{r-2 \sin r+r \cos r}{2 r}\right)\in\R^{2n}\times\R.
  \end{gather}
  The statement then follows by direct computations.
\end{proof}

\begin{lem}\label{lem:non-ortho}
  Let $\alpha,\beta:(-2\pi,2\pi)\to\R$ be such that
  \begin{equation}\label{eq:alpha-beta}
    \alpha(r) = \frac{r-\sin r}{r(1-\cos r)}\beta(r), \qquad \forall r\in (-2\pi,2\pi).
  \end{equation}
  Then, letting $\bar V_j=(0,W_j,0)$, $j=3,\ldots, 2n$, a (non-orthonormal) basis for $\Phi^*\distr$ is given by $\{V_1, \bar V_2, \ldots, \bar V_{2n}\}$, with
  \begin{equation}
    V_1 = \Phi^*(\nH\delta) = \left(1,0,\frac rt \right)
    \qquad
    \bar V_2 = \left(0, \frac {\alpha(r)}t J\varpi,\frac {\beta(r)}t\right).
  \end{equation}
\end{lem}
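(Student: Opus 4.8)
The plan is to verify the two requirements for $\{V_1,\bar V_2,\bar V_3,\dots,\bar V_{2n}\}$ to be a basis of $\Phi^*\distr$. Since $\distr$ has rank $2n$, it is enough to show that each of these $2n$ sections is \emph{horizontal} (i.e.\ $D\Phi$ maps it into $\distr$) and that they are pointwise linearly independent. Horizontality of $V_1=\Phi^*(\nH\delta)$ is exactly Lemma~\ref{lem:pullback-nabla-delta} (or the definition of $\nH\delta$), so the work is: (a) horizontality of $\bar V_2,\dots,\bar V_{2n}$; and (b) linear independence.

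For (a) I would combine the explicit Jacobian of Proposition~\ref{prop:basic}(ii) with the pointwise description \eqref{eq:char_distr}. It pays to first record a few consequences of $A=-J(\idty-R_r)$: since $\tilde J^2=-\idty$ one has $\tilde J\tilde A=\idty-\tilde R_r$, and $\tilde A'=\tilde R_r$; moreover $\tilde R_{\pm r}=\cos r\,\idty\mp\sin r\,\tilde J$, whence $\tilde A\varpi=\sin r\,\varpi-(1-\cos r)\tilde J\varpi$, while $\tilde R_r$ preserves $\spn\{\varpi,\tilde J\varpi\}$ and hence also its orthogonal complement $\mathcal W_\varpi:=\varpi^\perp\cap(\tilde J\varpi)^\perp$, which contains the $W_j$'s. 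Applying $D\Phi$ to $\bar V_j=(0,W_j,0)$ gives horizontal part $\tfrac tr\tilde A W_j$ and vanishing vertical part, so \eqref{eq:char_distr} reduces to the scalar identity $\langle\tilde A\varpi,\tilde J\tilde A W_j\rangle=0$; this holds because $\tilde J\tilde A W_j=(\idty-\tilde R_r)W_j\in\mathcal W_\varpi$ whereas $\tilde A\varpi\in\mathcal W_\varpi^{\perp}$.

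The one genuine computation is the check for $\bar V_2=(0,\tfrac\alpha t\tilde J\varpi,\tfrac\beta t)$. Applying $D\Phi$ yields horizontal part $v'=\tfrac\alpha r\tilde A\tilde J\varpi-\tfrac{\beta}{r^2}\tilde A\varpi+\tfrac\beta r\tilde A'\varpi$ (which, like $\tilde A\varpi$, lies in $\spn\{\varpi,\tilde J\varpi\}$) and vertical part $v_z=-\tfrac{t\beta}{2r^{3}}(r-2\sin r+r\cos r)$. Expanding $\tilde Jv'$ in the pair $\{\varpi,\tilde J\varpi\}$ with the identities above and pairing against $\tilde A\varpi=\sin r\,\varpi-(1-\cos r)\tilde J\varpi$, the simplifications $\sin^2 r+(1-\cos r)^2=2(1-\cos r)$ and $\sin^2 r-\cos r(1-\cos r)=1-\cos r$ collapse everything to $\langle\tilde A\varpi,\tilde Jv'\rangle=\tfrac{1-\cos r}{r}(\beta-2\alpha)$. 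Imposing $v_z=\tfrac12\tfrac tr\langle\tilde A\varpi,\tilde Jv'\rangle$ and clearing denominators, the elementary identity $r(1-\cos r)+(r-2\sin r+r\cos r)=2(r-\sin r)$ turns the horizontality condition into $\alpha(1-\cos r)=\tfrac{r-\sin r}{r}\beta$, i.e.\ precisely \eqref{eq:alpha-beta}. Hence \eqref{eq:alpha-beta} is exactly the relation making $\bar V_2$ horizontal.

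For (b), linear independence is read off the splitting $TU\ni(v_t,v_\varpi,v_r)$ with $v_\varpi\in\varpi^\perp$: in a vanishing linear combination the $v_t$-component kills the coefficient of $V_1$, and then the $v_\varpi$-component kills the rest, using that $\tilde J\varpi\in\mathcal W_\varpi^{\perp}$ is independent from the basis $W_3,\dots,W_{2n}$ of $\mathcal W_\varpi$ and that $\alpha$ — equivalently $\beta$, by \eqref{eq:alpha-beta}, since $\tfrac{r-\sin r}{r(1-\cos r)}>0$ on $(-2\pi,2\pi)$ — is nonvanishing (this holds for the choice $\beta=rw$, $\alpha=rv$ made in Proposition~\ref{p:ortho}; if $\beta$ vanishes at some $r_0$ one reads the statement at points $r\neq r_0$). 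As $\rank\distr=2n$, the $2n$ horizontal fields then form a basis of $\Phi^*\distr$. I do not anticipate a real obstacle: the whole argument is linear algebra in $\R^{2n+1}$, and the only mildly fiddly point is arranging the $\bar V_2$ computation so the trigonometric cancellations are transparent, which is why I would work throughout inside the plane $\spn\{\varpi,\tilde J\varpi\}$, on which $\tilde A$, $\tilde A'$ and $\tilde J$ all act.
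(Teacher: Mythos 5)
Your proposal is correct and follows essentially the same route as the paper: push the candidate fields forward via the Jacobian of $\Phi$, test them against the characterization \eqref{eq:char_distr} of the distribution, and observe that the horizontality of $\bar V_2$ reduces (via the scalar products $\langle\tilde A\varpi,\tilde J\tilde A\tilde J\varpi\rangle=-2(1-\cos r)$ and $\langle\tilde A\varpi,\tilde J\tilde A'\varpi\rangle=1-\cos r$, which you obtain by working inside $\operatorname{span}\{\varpi,\tilde J\varpi\}$ where the paper instead invokes $A^TJA=2(1-\cos r)J$ and $(A^TJA')_{11}=1-\cos r$) exactly to the relation \eqref{eq:alpha-beta}. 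Your explicit linear-independence check and the caveat about $\beta$ possibly vanishing are welcome additions that the paper leaves implicit, but they do not change the substance of the argument.
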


\begin{proof}
By definition, $\Phi_*V_1=\nH\delta$ is a horizontal vector field. Its expression in $\Phi$-coordinates is derived in Lemma~\ref{lem:pullback-nabla-delta}.

Let $V\in\Gamma(\bS^{2n-1})$, i.e., $V\in \bR^{2n}$ such that $V(\varpi)\perp\varpi$. Observe that $A^T J A=2(1-\cos r)J$. Then, by identifying $V \in T\bS^{2n-1}$ and  $(0,V,0)\in TU$ and by \eqref{eq:char_distr}, $\Phi_*V\in\Gamma(\distr)$ if and only if
\begin{equation}
  0=\frac12 \frac tr \left\langle \tilde A \varpi,\frac tr \tilde J\tilde AV(\varpi)\right\rangle_{\bR^{2n}}  \iff V(\varpi)\perp \tilde J\varpi.
\end{equation}
In particular, $\dim(T\bS^{2n-1}\cap \Phi^*\distr)=2n-2$. Thus, the basis $\bar V_3,\ldots, \bar V_{2n}$ for $T\bS^{2n-1}$ yields $2n-2$ additional horizontal directions.

We now turn to show that $\bar V_2$ is linearly independent to the others.
Straightforward computations yield
\begin{equation}
  \Phi_* \bar V_2(t,\varpi,r) = \frac1r \left( \alpha\tilde A \tilde J\varpi + \beta\left(\tilde A'-\frac1r \tilde A\right)\varpi, \beta\frac tr \frac{2\sin r-r\cos r-r}{2r}\right)
\end{equation}
By \eqref{eq:char_distr}, we have that $\Phi_*\bar V_2$ is horizontal if and only if
\begin{equation}\label{eq:perp-hor}
  \beta\frac{(2\sin r-r\cos r-r)}{r} = 
  \left\langle \tilde A\varpi,  \alpha\tilde J\tilde A \tilde J\varpi + \beta\tilde J(\tilde A'-\frac1r \tilde A)\varpi\right\rangle_{\bR^{2n}}.
\end{equation}
Since $\tilde J\varpi\perp\varpi$, we have
\begin{multline}\label{eq:ciao1}
      \left\langle \tilde A\varpi,  \tilde J(\tilde A'-\frac1r \tilde A)\varpi\right\rangle_{\bR^{2n}} 
    =\left\langle \tilde A\varpi,  \tilde J\tilde A'\varpi\right\rangle_{\bR^{2n}} \\
    =\sum_{i=1}^n \left\langle A\varpi_i,  J A'\varpi_i\right\rangle_{\bR^{2}}
    =\sum_{i=1}^n (1-\cos r) |\varpi_i|^2
    = 1-\cos r.
\end{multline}
Here, we used the invariance under rotations in $\bR^2$ of $A$, $J$, and $A'$ and the explicit expression for $(A^TJA')_{11}=1-\cos{r}$. 
Moreover, by the fact that $J^2 = -\idty$ and $(A^TA)_{11}=2(1-\cos r)$, we have
\begin{equation}\label{eq:ciao2}
  \left\langle \tilde A\varpi, \tilde J\tilde A \tilde J\varpi\right\rangle_{\bR^{2n}} = -2(1-\cos r).
\end{equation}
By the assumption on $\alpha$ and $\beta$, the statement follows by \eqref{eq:perp-hor}, \eqref{eq:ciao1}, and \eqref{eq:ciao2}.
\end{proof}

\begin{proof}[Proof of Proposition \ref{p:ortho}]
  Observe that, $\alpha(r) = rv(r)$ and $\beta(r) = rw(r)$ satisfy assumption \eqref{eq:alpha-beta}.
  Firstly, we show that the basis given in Lemma~\ref{lem:non-ortho} can be orthonormalized to  $\{V_1,\ldots, V_{2n}\}$. 
  
  To this purpose, we claim that the push-forward of $\bar V_j$, $j=3,\ldots, 2n$, is orthogonal to vector fields of the form $X=(a,\varpi,br/t)$, with $a,b:(-2\pi,2\pi)\to\R$. Indeed, we have
  \begin{equation}\label{eq:scalar}
    \langle \Phi_* \bar V_j,\Phi_*X \rangle_{\bH^n} = \left\langle \frac tr\tilde A W_j,\frac tr\tilde A \varpi \right\rangle_{\bR^{2n}}+\left\langle\frac{t}{r} \tilde A W_j, \left((a-b)\frac{\tilde A}r +b\tilde A'\right)\varpi\right\rangle_{\bR^{2n}}.
  \end{equation}
  Since $A^TA=2(1-\cos{r})\idty$ and $A^TA'=A$, we have
  \begin{equation}\label{eq:ciao}
    \begin{split}
    \tilde A^T\left((a-b)\frac{\tilde A}r +b\tilde A'\right)\varpi
    &=\left(2(1-\cos{r})\frac{a-b}{r}+b\tilde A\right)\varpi,
    \end{split}
  \end{equation}
Then, by definition of $A$, we get 
\begin{equation}\label{eq:ciao2}
\tilde A\varpi=\left(-\tilde J+\tilde R_r \tilde J\right)\varpi
=(\sin{r})\varpi+(\cos{r}-1)\tilde J\varpi.
\end{equation}
Hence, by \eqref{eq:ciao} and \eqref{eq:ciao2} we get 
\begin{equation}
\tilde A^T\left((a-b)\frac{\tilde A}r +b\tilde A'\right)\varpi\in\operatorname{span}\{\varpi,\tilde J\varpi\}.
\end{equation}
Since $\bar V_j = (0,W_j,0)$ with  $W_j\perp\{\omega,\tilde J\omega\}$,  and $A^T A = 2(1-\cos r)\idty$, this proves the claim.

  As a consequence of the previous claim, we have that $\operatorname{span}\{V_3,\ldots, V_{2n}\}$ is orthogonal to $\operatorname{span}\{V_1,V_2\}$.
  Moreover, for any $i,j=3,\ldots,2n$, we have
  \begin{equation}
    \langle \Phi_* \bar V_i, \Phi_*\bar V_j\rangle_{\H^n} = 2t^2\frac{1-\cos r}{r^2} \langle W_i,W_j \rangle_{\R^{2n}}.
  \end{equation}
  Since $\{W_3,\ldots, W_{2n}\}$ are orthonormal in $\R^{2n}$, this shows that $V_i = \bar V_i/\|\Phi_*\bar V_i\|_\H^n$ and that $\{V_3,\ldots, V_{2n}\}$ is an orthonormal family.
  
  Let us now show that also $\{V_1,V_2\}$ is an orthonormal family. It is clear that, by Lemma~\ref{lem:pullback-nabla-delta}, $\|\Phi_*V_1\|_{H^1} = \|\nd\|_{H^1}\equiv 1$, since $\delta$ satisfies the Eikonal equation. On the other hand, if $\alpha(r)=g(r)\beta(r)$, where $g$ is given by \eqref{eq:alpha-beta}, we have
  \begin{equation}
    |\Phi_*\bar V_2|^2 = \frac{\beta^2}{r^2} \left| \left(g(r)\tilde A\tilde J - \tilde A'-\tilde A/r\right)\varpi \right|_{\bR^{2n}}^2 = \frac{\left(r \cot \left(\frac{r}{2}\right)-2\right)^2}{r^2}, 
  \end{equation}
  thus showing that $\|\Phi_*V_2\|_{\H^n} = w^{-1}\|\Phi_*\bar V_2\|_{\H^n}\equiv1$.
  Finally, we have
  \begin{equation}
    \langle \Phi_*V_1,\Phi_*V_2\rangle_{\bH^n} 
    = \left\langle \tilde A'\varpi, \left(v \tilde A\tilde J+\tilde A'-\frac{\tilde A}r \right)\varpi \right\rangle
    =  \left\langle \tilde A'\varpi, \tilde J\tilde A'\varpi \right\rangle 
    = 0. 
  \end{equation}
  
  To complete the proof, we need to show that $V_2=\Phi^*(\ndperp)$. To this aim, observe that rotations around $\cZ$ are generated by the vector fields $\Phi_*(V_3),\ldots,\Phi_*(V_{2n})$ that are orthogonal to $\Phi_*(V_2)$. Indeed, these are $2n-2$ linearly independent vector fields such that $V(|\xi|^2)\equiv0$, as is evident from the fact that
  \begin{equation}
    \frac{|\xi|^2}2 = \frac{t^2}{r^2}(1-\cos r),\qquad \text{if }(\xi,z)=\Phi(t,\varpi,r).
  \end{equation}
  Finally, a simple computation shows that
  \begin{equation}
    \lim_{r\to 0}dz\left( \Phi_*(V_2) \right)|_{\Phi(t,\varpi,r)} = -\lim_{r\to 0} {t}w(r)\frac{r-2\sin r+r\cos r}{2r^2} = \frac t2>0.
  \end{equation}
  Since $\{z=0\}\setminus\cZ=\Phi(\{r=0\})$ and $\ndperp\perp\Phi^*(V_2)$, this completes the proof.
\end{proof}

\subsection{Horizontal Sobolev spaces}

The Haar measure on $\H^n$ is, up to a constant, the $2n+1$-dimensional Lesbegue measure. This allows to define the space of square integrable functions $L^2(\H^n)$. Moreover, the horizontal gradient associated with the Heisenberg structure is
\begin{equation}
  \nH u = \sum_{i=1}^n \left(  (X_iu)X_i+(Y_iu)Y_i\right), \qquad u\in C^\infty(\H^n).
\end{equation}
Then, $H^1(\H^n)$ is the closure of $C^\infty_c(\H^n)$ w.r.t.\ the horizontal Sobolev norm
\begin{equation}
  \|u\|^2_{H^1(\H^n)} = \int_{\H^n}|u|^2\,dp + \int_{\H^n}|\nH u|^2\,dp.
\end{equation}
The same techniques used in \cite{AFBP} show that $C^\infty_c(\H^n\setminus\{0\})$ is dense in $H^1(\H^n)$. In particular, all the infima appearing in the definitions of the Hardy constants can be calculated on $H^1_c(\H^n)$, i.e., compactly supported functions of $H^1(\H^n)$.

In the following we show how, under suitable technical assumptions, the fact that a certain function belongs to $H^1(\bH^n)$ can be checked in terms of integrals in the $\Phi$ coordinates, regardless of the singularity of the latter. 

\begin{prop}\label{prop:H1}
  Let $u\in L^2(\H^n)$ be such that $u\circ\Phi(t,\varpi,r)=g(t)h(r)$ for some function $g:(0,+\infty)\to \R$ and $h:(-2\pi,2\pi)\to \R$ bounded as $r\to 2\pi$. Then, $u\in H^1(\bH^n)$ if and only if both $\Phi^*u$ and  $\Phi^*|\nH u|$ belong to $L^2(U,t^{2n+1}\mu(r)\,dt\,d\varpi\,dr)$.
\end{prop}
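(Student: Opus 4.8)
The plan is to transport the question onto $\H^n\setminus\cZ$, where $\Phi$ is a diffeomorphism, and then to argue that the center $\cZ$ is negligible for the horizontal Sobolev space. Since $\cZ$ is Lebesgue-negligible and $\Phi:U\to\H^n\setminus\cZ$ is a diffeomorphism, the change-of-variables formula of Proposition~\ref{prop:basic}(iii) identifies $\|\Phi^*v\|_{L^2(U,\,t^{2n+1}\mu\,dt\,d\varpi\,dr)}$ with $\|v\|_{L^2(\H^n)}$ for every $v\in L^2(\H^n)$. Recalling that $H^1(\H^n)$ coincides with the space of $L^2$ functions whose distributional horizontal gradient lies in $L^2$ (a Meyers--Serrin/Friedrichs-type fact for Carnot groups, in the spirit of \cite{AFBP}), the implication ``$u\in H^1(\H^n)$ $\Rightarrow$ $\Phi^*u,\ \Phi^*|\nH u|\in L^2(U,\dots)$'' is then immediate, applying the identity above to $v=u$ and to $v=|\nH u|$.

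For the converse, assume $\Phi^*u,\ \Phi^*|\nH u|\in L^2(U,t^{2n+1}\mu\,dt\,d\varpi\,dr)$. On every compact $K\subset\H^n\setminus\cZ$ the weight $t^{2n+1}\mu(r)$ is bounded between positive constants on the compact set $\Phi^{-1}(K)$; combining this with the product structure $u\circ\Phi=g(t)h(r)$ (with $g,h$ absolutely continuous, as is implicit in $\nH u$ being defined a.e.), a Fubini-type argument along the coordinate curves of $\Phi$ shows that $u\in W^{1,2}_{\H,\loc}(\H^n\setminus\cZ)$, with weak horizontal gradient equal a.e.\ to the pointwise $\nH u$, and that $u,\ |\nH u|\in L^2(\H^n)$. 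The hypothesis that $h$ is bounded near $r=\pm2\pi$ moreover guarantees that $u$ is bounded on a neighbourhood of $\cZ$. After a routine truncation at infinity, which costs nothing in the limit because $u,\nH u\in L^2$, one may assume $u$ compactly supported; it then remains to upgrade $u$ to an element of $W^{1,2}_\H(\H^n)=H^1(\H^n)$, i.e.\ to show that $\cZ$ carries no mass for the horizontal gradient.

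For this I would use cutoffs $\chi_\varepsilon\in C^\infty(\H^n)$ with $0\le\chi_\varepsilon\le1$, equal to $0$ on a neighbourhood of $\cZ$ shrinking to $\cZ$ and tending to $1$ pointwise on $\H^n\setminus\cZ$, and set $u_\varepsilon=\chi_\varepsilon u$. Being compactly supported in $\H^n\setminus\cZ$ and locally $W^{1,2}_\H$ there, each $u_\varepsilon$ extends by zero to an element of $W^{1,2}_\H(\H^n)$, and $u_\varepsilon\to u$ in $L^2(\H^n)$ by dominated convergence. Since $\nH u_\varepsilon=\chi_\varepsilon\,\nH u+u\,\nH\chi_\varepsilon$ with $\chi_\varepsilon\,\nH u\to\nH u$ in $L^2$ (dominated convergence), the whole argument reduces to the single estimate $\|u\,\nH\chi_\varepsilon\|_{L^2(\H^n)}\to0$: granting it, $(u_\varepsilon)$ is Cauchy in $W^{1,2}_\H(\H^n)$, and completeness together with $u_\varepsilon\to u$ in $L^2$ forces $u\in W^{1,2}_\H(\H^n)=H^1(\H^n)$, with horizontal gradient $\nH u$.

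The estimate $\|u\,\nH\chi_\varepsilon\|_{L^2}\to0$ is the heart of the matter, and this is where the codimension of $\cZ$ enters. Since $X_i-\partial_{x_i}$ and $Y_i-\partial_{y_i}$ are multiples of $\partial_z$, a function $\chi=\chi(\xi)$ satisfies $\nH\chi=\nabla_\xi\chi$ exactly, so one is reduced, fibrewise in $z$ and using that $u$ is bounded near $\cZ$ and compactly supported, to removing the origin from $\R^{2n}$ while controlling $\nabla_\xi$ in $L^2$. For $n\ge2$ the affine cutoff $\chi_\varepsilon(\xi,z)=\theta_\varepsilon(|\xi|)$, with $\theta_\varepsilon$ vanishing on $[0,\varepsilon]$, equal to $1$ on $[2\varepsilon,\infty)$ and $|\theta_\varepsilon'|\lesssim\varepsilon^{-1}$ in between, gives $\|u\,\nH\chi_\varepsilon\|_{L^2}^2\lesssim\varepsilon^{-2}\,\varepsilon^{2n}=\varepsilon^{2n-2}\to0$. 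For $n=1$, where $\cZ$ has codimension exactly $2$, this choice fails and one must instead take the logarithmic cutoff $\theta_\varepsilon(s)=\min\{1,\,\max\{0,\,\log(s/\varepsilon^2)\}/\log(1/\varepsilon)\}$, suitably smoothed, for which $\|u\,\nH\chi_\varepsilon\|_{L^2}^2\lesssim(\log(1/\varepsilon))^{-2}\int_{\varepsilon^2<|\xi|<\varepsilon}|\xi|^{-2}\,d\xi\,dz\lesssim(\log(1/\varepsilon))^{-1}\to0$. This borderline case $n=1$ is the main obstacle; the remaining steps are dominated convergence and the change of variables already recorded.
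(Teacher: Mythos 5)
Your route is genuinely different from the paper's. You remove the center $\cZ$ by a capacity argument (cutoffs $\chi_\varepsilon$ depending only on $\xi$, with the logarithmic cutoff in the borderline case $n=1$), whereas the paper never cuts off: it replaces $u$ on the shrinking paraboloidal neighbourhoods $\Omega_k=\{|\xi|^2\le\alpha_k|z|\}$ of $\cZ$ by the $z$-dependent functions $v_k(\xi,z)=h\circ\phi^{-1}(\alpha_k)\,\varphi(\sqrt{\alpha_k|z|})$, which agree with $u$ on $\partial\Omega_k$ by the product structure $u\circ\Phi=g(t)h(r)$, and whose horizontal energy is $O(\alpha_k)$ because $|\nH v|^2=\tfrac{|\xi|^2}{4}|\partial_z v|^2$ is damped by the smallness of $|\xi|$ on $\Omega_k$. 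Your observation that $\nH\chi=\nabla_\xi\chi$ for $\chi=\chi(\xi)$ is correct, the codimension count for $\cZ$ is right, and the overall reduction (Meyers--Serrin, truncation at infinity, Cauchy sequence in $W^{1,2}_\H$) is sound.

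There is, however, one concrete gap: the claim that the boundedness of $h$ near $r=\pm2\pi$ ``guarantees that $u$ is bounded on a neighbourhood of $\cZ$.'' The origin belongs to $\cZ$, and near the origin the boundedness of $u=g(t)h(r)$ is governed by $g$ as $t\to0^+$, about which the hypotheses say nothing directly. The conditions $\Phi^*u,\,\Phi^*|\nH u|\in L^2(t^{2n+1}\mu\,dt\,d\varpi\,dr)$ only force $\int_0^1|g'|^2t^{2n+1}\,dt<\infty$, which by Cauchy--Schwarz gives $g(t)=o(t^{-n})$ but does not exclude, say, $g(t)\sim t^{-n}(\log(1/t))^{-1}$; such a $u$ satisfies all the hypotheses yet is unbounded at the origin. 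Since the supports $\{\varepsilon<|\xi|<2\varepsilon\}$ (resp.\ $\{\varepsilon^2<|\xi|<\varepsilon\}$) of $\nH\chi_\varepsilon$ reach down to arbitrarily small $t$, your estimates $\|u\,\nH\chi_\varepsilon\|_{L^2}^2\lesssim\varepsilon^{2n-2}$ and $\lesssim(\log(1/\varepsilon))^{-1}$, which use $|u|\le M$ on $\supp\nH\chi_\varepsilon$, are not justified there. This is repairable --- e.g.\ insert an additional cutoff $\theta(\delta(p)/\varepsilon)$ at the origin and show $\varepsilon^{-2}\int_{B_{2\varepsilon}\setminus B_\varepsilon}|u|^2\,dp\to0$ via a one-dimensional Hardy inequality for $g$ with weight $t^{2n+1}$ --- but as written the argument only covers the case where $g$ is bounded near $t=0$; note that the paper's construction is arranged precisely so as not to need any such bound.
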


\begin{proof}
  The necessary part of the proof is immediate. We thus focus on the other implication. To this purpose, observe that $\Phi^*u\in L^2(U,t^{2n+1}\mu(r)\,dt\,d\varpi\,dr)$ is trivially equivalent to $u\in L^2(\H^n)$. Fix then $u\in L^2(\H^n)$ such that $\Phi^*|\nH u|\in L^2(U,t^{2n+1}\mu(r)\,dt\,d\varpi\,dr)$.

  By assumption, there exists $\varphi:(0,+\infty)\to \R$ such that $u(\xi,\alpha|\xi|^2)=h(\phi^{-1}(\alpha))\varphi(|\xi|)$ for $\alpha>0$ and where $\phi$ is defined in \eqref{eq:phi}. Fix a sequence of positive numbers $\alpha_k\rightarrow0$ and let $\Omega_k =\{ |\xi|^2\le \alpha_k |z| \}$. Define
  \begin{equation}
    v_k(\xi,z) = h\circ\phi^{-1}(\alpha_k)\varphi(\sqrt{\alpha_k |z|}), \qquad (\xi,z)\in\Omega_k.
  \end{equation}
  The Euclidean gradient of $v_k$ can be directly computed as
  \begin{equation}
    |\nabla v_k| = |\partial_z v_k| = h\circ\phi^{-1}(\alpha_k)\sqrt{\frac{\alpha_k}{4|z|}}\,\big|\varphi'(\sqrt{\alpha_k|z|})\big|.
  \end{equation}
  Thus we get
  \begin{equation}
    \int_{\Omega_k} |\nH v_k|^2\,dp \le \left(h\circ\phi^{-1}(\alpha_k)\right)^2\,\omega_{2n} \alpha_k\int_0^{+\infty} |\varphi'(\eta)|^2\eta^{2n-1}\,d\eta.
  \end{equation}
  Since $\Phi^*|\nH u|\in L^2(U,t^{2n+1}\mu(r)\,dt\,d\varpi\,dr)$, the integral on the r.h.s.\ is bounded, as is the quantity $h\circ\phi^{-1}(\alpha_k)$ as $k\to +\infty$. Therefore, 
  \begin{equation}
    \lim_{k\to +\infty}\int_{\Omega_k} |\nH v_k|^2\,dp  = 0.
  \end{equation}
  
  Define $u_k = u|_{\Omega_k^c} + v_k|_{\Omega_k}$. Thanks to a triangle inequality argument, the above result and the fact that $\Phi^*|\nH u|$ belongs to $L^2(U,t^{2n+1}\mu(r)\,dt\,d\varpi\,dr)$ imply that $(u_k)_k$ is a Cauchy sequence in $H^1(\H^1)$. Since, by construction, $u_k\rightarrow u$ pointwise, this completes the proof of the statement.
\end{proof}

\section{Upper bounds of Hardy constants on $\bH^n$}\label{sec:yang}

In this Section, we prove Theorem~\ref{thm:counter-yang}. 
We start by considering the radial Hardy constant, in the following. 

\begin{prop}\label{p:radial-const}
  It holds $c_n^\text{rad}=0.$
\end{prop}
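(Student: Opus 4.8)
The plan is to show that for every $c>0$ the radial inequality \eqref{eq:radial-heis} fails, by exhibiting a sequence of test functions for which the ratio $\int |\langle\nH u,\nH\delta\rangle|^2\,dp \big/ \int |u|^2\delta^{-2}\,dp$ tends to $0$. The natural idea is to choose functions that are \emph{constant along the flow of $\nH\delta$}, so that $\langle\nH u,\nH\delta\rangle\equiv 0$; then the numerator vanishes identically while the denominator is positive, which already forces $c_n^{\text{rad}}=0$ — provided such functions can be taken in (or approximated within) the admissible class $C^\infty_c(\H^n\setminus\{0\})$, equivalently $H^1_c(\H^n\setminus\{0\})$.

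Concretely, I would work in the $\Phi$-coordinates of Definition~\ref{def:phi-coord}. By Lemma~\ref{lem:pullback-nabla-delta}, $\Phi^*(\nH\delta) = \partial_t + \frac rt\partial_r$, so the flow lines of $\nH\delta$ are the geodesics $t\mapsto\Phi(t,\varpi,tp_z)$, i.e.\ the curves along which the quantity $p_z = r/t$ and $\varpi$ are constant. Hence a function of the form $u\circ\Phi(t,\varpi,r) = F(r/t,\varpi)$ satisfies $\langle\nH u,\nH\delta\rangle = V_1(u) = (\partial_t + \frac rt\partial_r)(u\circ\Phi) = 0$. To get something in $C^\infty_c(\H^n\setminus\{0\})$ one cannot keep $u$ exactly of this form globally (it would not be compactly supported), so I would instead take $u$ to be an honest bump function and estimate: pick $\chi\in C^\infty_c(0,\infty)$ supported in $(1,2)$, and for small $\varepsilon>0$ set $u_\varepsilon\circ\Phi(t,\varpi,r) = \chi(t)\,\eta_\varepsilon(r/t)$ where $\eta_\varepsilon$ localizes $p_z=r/t$ to an interval of length $\sim\varepsilon$ near some fixed value, renormalized so that $\int|u_\varepsilon|^2\delta^{-2}\,dp$ stays bounded below. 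Along $\partial_t+\frac rt\partial_r$ the factor $\eta_\varepsilon(r/t)$ is killed, so $\langle\nH u_\varepsilon,\nH\delta\rangle = \chi'(t)\eta_\varepsilon(r/t)$; using Proposition~\ref{prop:basic}(iii) for the volume form $t^{2n+1}\mu(r)\,dt\,d\varpi\,dr$ and $\delta=t$, the numerator is $O(1)$ while after renormalization the denominator can be made arbitrarily large, or — cleaner — mollify the exactly-$\nH\delta$-invariant profile so that the numerator is $O(\varepsilon)$ and the denominator is $O(1)$. Either way the ratio $\to 0$.

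A slicker route, which I would actually prefer to write up, avoids the compact-support subtlety by cutting along $\nH\delta$: fix a smooth $H^1_c$ function $w$ on $\H^n\setminus\{0\}$ with $\int|w|^2\delta^{-2}>0$, and replace $w$ by its average (or a suitable reparametrization) so as to damp its derivative in the $\nH\delta$ direction while controlling the weighted $L^2$ norm. The density statement recorded after Proposition~\ref{prop:H1} — that $C^\infty_c(\H^n\setminus\{0\})$ is dense in $H^1(\H^n)$ and the Hardy infima may be computed on $H^1_c$ — lets me pass from smooth bumps to the limiting profile, and Proposition~\ref{prop:H1} itself is the tool for checking that the limiting function, given in $\Phi$-coordinates, genuinely lies in $H^1$. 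The Hardy inequality \eqref{eq:lehrback} guarantees $\int|w|^2\delta^{-2}<\infty$ automatically for $w\in H^1_c$, so the denominators are well behaved.

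The main obstacle is the tension between "constant along $\nH\delta$" and "compactly supported away from the origin and from $\cZ$": a nonzero function constant along all the geodesics emanating from $0$ cannot have compact support, and moreover $\nH\delta$ (hence the whole construction) degenerates on the center $\cZ$ and the coordinates $\Phi$ are singular as $r\to 0$ and $r\to 2\pi$. So the real work is the quantitative truncation argument: choosing the cutoff $\chi(t)$ and the localizer $\eta_\varepsilon$ so that (i) the resulting $u_\varepsilon$ is admissible (smooth, compactly supported in $\H^n\setminus\{0\}$, which by Proposition~\ref{prop:H1} reduces to integrability conditions in the $\Phi$-coordinates, taking care of the $r\to 2\pi$, i.e.\ $\xi\to 0$, behavior), and (ii) the contribution of $\chi'$ to $\langle\nH u_\varepsilon,\nH\delta\rangle$ is negligible compared with $\int|u_\varepsilon|^2\delta^{-2}$, for which one uses the homogeneity of $\delta$ and the explicit Jacobian $\mu(r)$ from Proposition~\ref{prop:basic}(iii). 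Once this bookkeeping is set up, the conclusion $c_n^{\text{rad}}=0$ is immediate.
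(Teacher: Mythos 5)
Your starting point is the same as the paper's --- functions of the form $F(r/t,\varpi)$ are annihilated by $\nH\delta$, since $\Phi^*(\nH\delta)=\partial_t+\tfrac rt\partial_r$ --- but the truncation you propose does not close the argument, and the gap is not a bookkeeping issue. Take your concrete candidate $u_\varepsilon\circ\Phi=\chi(t)\eta_\varepsilon(r/t)$ with $\chi$ supported in $(1,2)$. You correctly get $\langle\nH u_\varepsilon,\nH\delta\rangle=\chi'(t)\eta_\varepsilon(r/t)$, but then the quotient is
\begin{equation*}
\frac{\int|\chi'(t)|^2\,H(t)\,t^{2n+1}\,dt}{\int|\chi(t)|^2\,H(t)\,t^{2n-1}\,dt},
\qquad H(t):=\int_{-2\pi}^{2\pi}\eta_\varepsilon(r/t)^2\mu(r)\,dr,
\end{equation*}
and the factor $H$ appears identically in numerator and denominator. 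On a fixed annulus $t\in(1,2)$ this is bounded below by a positive constant (a Dirichlet eigenvalue of an interval), uniformly in $\varepsilon$; localizing $r/t$ buys nothing. Spreading $\chi$ out does not help either: for any test function of the product form $g(t)F(r/t,\varpi)$ the quotient reduces to a one--dimensional weighted Hardy quotient in $t$ with weight $G(t)t^{2n+1}$, $G(t)=\int|F(r/t,\varpi)|^2\mu\,dr\,d\varpi$, and unless $G$ decays in $t$ the infimum is the $(2n+2)$--dimensional radial Hardy constant $n^2>0$, not $0$. So ``mollify the invariant profile and make the numerator $O(\varepsilon)$'' is exactly the step that fails.

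The missing idea is that the $\nH\delta$--invariant factor must itself decay in $t$ so as to lower the effective dimension. The paper takes $u\circ\Phi(t,\varpi,r)=(r/t)^n h(t)$: the factor $(r/t)^n$ is killed by $\partial_t+\tfrac rt\partial_r$, so $\langle\nH u,\nH\delta\rangle=(r/t)^n h'(t)$, while its $t^{-2n}$ decay turns the weight $t^{2n+1}\mu(r)\,dt\,dr$ into (a constant times) $t\,dt$ after integrating out $r$ and $\varpi$. The quotient then becomes $\int|h'|^2t\,dt\big/\int h^2t^{-1}\,dt$, i.e.\ the two--dimensional radial Hardy quotient, whose infimum over $h\in C^\infty_c((0,+\infty))$ is $(2-2)^2/4=0$; the power $n$ is also exactly what keeps $\langle\nH u,\Xi\rangle=n(r/t)^nw(r)h(t)/t$ bounded near $r=0$ (where $w(r)\sim 6/r$), so that $u\in H^1$ by Proposition~\ref{prop:H1}. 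Without this dimensional--reduction device (or an equivalent one), your construction gives a strictly positive lower bound rather than $0$.
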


\begin{proof}
Let $h\in C^\infty_c((0,+\infty))$ and define 
\begin{equation}
  u\circ\Phi(t,\varpi,r) = \left(\frac{r}{t}\right)^nh(t).
\end{equation}
By continuity, the above defines a continuous function $u:\H^1\to \R$.
Then, direct computations yield
\begin{equation}
  \langle\nH u,\nH\delta\rangle|_{\Phi(t,\varpi,r)} = \left( \frac rt\right)^n h'(t),\quad
  \langle\nH u,\ndperp\rangle|_{\Phi(t,\varpi,r)} = n\left( \frac rt\right)^n w(r)\frac{h(t)}t.
\end{equation}
In particular, this implies that $u\in H^1_0(\H^1)$ by Proposition~\ref{prop:H1}. Finally, direct computations yield
\begin{equation}\label{eq:boh}
  c^{\text{rad}}_n\le \frac{\int_{\H^n}|\langle\nH u,\nH\delta\rangle|^2\,dp}{\int_{\H^n}\frac{u^2}{\delta^2}\,dp}=\frac{\int_0^{+\infty} |h'|^2t\,dt}{\int_0^{+\infty} \frac{h^2}{t^2}t\,dt}.
\end{equation}
Observe that, letting $v:\R^2\to \R$ be the radially symmetric function defined by $v(p)=h(|p|)$, where $|p|$ is the Euclidean norm of $p$, we have
\begin{equation}
  \frac{\int_0^{+\infty} |h'|^2t\,dt}{\int_0^{+\infty} \frac{h^2}{t^2}t\,dt} = \frac{\int_{\R^2} \left|\nabla v\right|\,dp }{\int_{\R^2} \frac{|v|^2}{|p|^2}\,dp}.
\end{equation}
Since the Euclidean Hardy constant in $\R^2$ is obtained via radially symmetric functions, taking the infimum w.r.t.\ $h\in C^\infty_c((0,+\infty))$ in \eqref{eq:boh} yields the statement by \eqref{eq:hardy-eucl}.
\end{proof}

\begin{rem}
  The proof is based on the fact that all functions of the form $f\circ\Phi(t,\varpi,r)=\varphi(r/t)$ satisfy $\langle\nH f,\nH\delta\rangle\equiv0$. 
\end{rem}

We now turn our attention to the full Hardy constant.

\begin{prop}\label{p:koranyi}
  For any $n\ge 1$ we have that $c_n<n^2$
\end{prop}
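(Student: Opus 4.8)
The plan is to prove that $c_n < n^2$ by exhibiting an explicit test function for which the Rayleigh quotient $\int_{\H^n}|\nH u|^2\,dp / \int_{\H^n} u^2\delta^{-2}\,dp$ is strictly less than $n^2$, and the natural candidate is (a truncation of) the known extremal for the Korányi Hardy inequality \eqref{eq:garofalo}. Recall that \eqref{eq:garofalo} holds with sharp constant $n^2$ but with the weight $|\nH N|^2$ on the right-hand side; since $|\nH N|\le 1$ (the Korányi gauge $N$ is $1$-Lipschitz with respect to $\delta$, and in fact $|\nH N|<1$ away from a negligible set, being $|\nH N| = |\xi|/N$), replacing $|\nH N|^2$ by $1$ can only \emph{decrease} the right-hand side. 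The key point is that this inequality is \emph{strict} on a set of positive measure: the ratio $\delta^2/N^2$ and the weight $|\nH N|^2$ are genuinely different functions, so the equality case of \eqref{eq:garofalo} cannot simultaneously be the equality case of \eqref{eq:lehrback}.

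More precisely, first I would recall that the sharpness of \eqref{eq:garofalo} is witnessed by functions of the form $u_\epsilon = N^{-n}\eta_\epsilon$, where $\eta_\epsilon$ is a logarithmic cutoff making $u_\epsilon\in H^1_c(\H^n\setminus\{0\})$; for these one has $\int |\nH u_\epsilon|^2 = n^2\int u_\epsilon^2 N^{-2}|\nH N|^2 + o(1)$ as $\epsilon\to 0$, with the error coming only from the cutoff. Then I would estimate
\[
  \int_{\H^n}\frac{u_\epsilon^2}{\delta^2}\,dp \;=\; \int_{\H^n} u_\epsilon^2 \,\frac{N^2}{\delta^2}\,\frac{1}{N^2}\,dp,
\]
and I would compare this with $\int u_\epsilon^2 N^{-2}|\nH N|^2\,dp$. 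Since $N$ and $\delta$ are $\varrho_\lambda$-homogeneous of degree $1$, both $N^2/\delta^2$ and $|\nH N|^2$ are $\varrho_\lambda$-invariant, hence functions only of the "angular" variable; writing everything in the polar-like coordinates $\Phi$ of Definition~\ref{def:phi-coord} (or simply using homogeneity to reduce to an integral over $\partial B_1$), the ratio of the two integrals converges as $\epsilon\to 0$ to a \emph{fixed} ratio of angular integrals, call it $\Lambda := \big(\int_{\partial B_1} N^{-2}\,d\sigma\big)\big/\big(\int_{\partial B_1} N^{-2}|\nH N|^2\,d\sigma\big)$ against the appropriate angular density. The decisive observation is that $|\nH N| = |\xi|/N < 1$ everywhere off the center $\cZ$, and $\cZ$ is $\sigma$-negligible, so $\int N^{-2}|\nH N|^2\,d\sigma < \int N^{-2}\,d\sigma$ strictly, i.e. $\Lambda > 1$.

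Combining the two estimates gives, for $\epsilon$ small,
\[
  \frac{\int_{\H^n}|\nH u_\epsilon|^2\,dp}{\int_{\H^n} u_\epsilon^2\,\delta^{-2}\,dp}
  \;=\; \frac{n^2\int u_\epsilon^2 N^{-2}|\nH N|^2\,dp + o(1)}{\int u_\epsilon^2 N^{-2}\,(N^2/\delta^2)\,dp}
  \;\xrightarrow[\epsilon\to 0]{}\; \frac{n^2}{\Lambda} \;<\; n^2,
\]
so $c_n \le n^2/\Lambda < n^2$, which is the claim. The main obstacle I anticipate is making the "$+o(1)$" rigorous and uniform — i.e. controlling the cutoff error in the numerator and checking that the logarithmic divergences in numerator and denominator match so that the ratio of the main terms indeed survives the limit — together with verifying carefully that $N^2/\delta^2$ is bounded above and below (equivalence of $N$ and $\delta$) so that the denominator integral is comparable to $\int u_\epsilon^2 N^{-2}\,dp$ and the limiting ratio $\Lambda$ is finite and strictly bigger than $1$. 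A cleaner alternative, avoiding cutoffs, would be to argue by contradiction: if $c_n = n^2$, then \eqref{eq:lehrback} with $c=n^2$ together with $|\nH N|\le 1$ would force \eqref{eq:garofalo} to be saturated by the same optimizing sequence while \emph{also} saturating $|\nH N|\equiv 1$ in the limit, which is impossible since $\{|\nH N|=1\}=\cZ$ has measure zero; I would likely present whichever of the two is shorter once the computation of $\Lambda$ is in hand.
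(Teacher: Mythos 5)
Your overall strategy is the same as the paper's: test the Rayleigh quotient on (truncations of) powers of the Korányi gauge $N$, use homogeneity to reduce the limiting quotient to a ratio of angular integrals, and conclude $c_n<n^2$ from a pointwise strict inequality holding on a set of positive measure. The paper does exactly this, working with $u_\alpha\sim N^{\alpha/2}$, $\alpha\to(-2n)^-$, in the polar-like coordinates $\Phi$, where the quotient converges to $n^2\int\gamma^{-2n}\eta\mu\,dr\big/\int\gamma^{-2n}\mu\,dr$ with $\eta=\delta^2|\nH N|^2/N^2$.

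However, there is a genuine gap in the step you call the ``decisive observation.'' Your limiting quotient is $n^2\int u^2N^{-2}|\nH N|^2\big/\int u^2N^{-2}(N^2/\delta^2)$, so what you must show is the pointwise (or at least averaged) inequality $N^{-2}|\nH N|^2\le\delta^{-2}$, i.e.\ $\eta=\delta^2|\nH N|^2/N^2\le1$ with strict inequality on a set of positive measure. The bound $|\nH N|=|\xi|/N\le1$ that you invoke only gives $N^{-2}|\nH N|^2\le N^{-2}$, and this pushes in the \emph{wrong} direction: with the paper's normalization one has $N\le\delta$ (e.g.\ $N=\delta/\sqrt{\pi}$ on the center, $\gamma=N/\delta\in[1/\sqrt{\pi},1]$), hence $N^{-2}\ge\delta^{-2}$, and $|\nH N|^2\le1$ alone does not force the product $N^{-2}|\nH N|^2$ below $\delta^{-2}$. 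The required inequality $\delta^2|\nH N|^2\le N^2$ is strictly stronger than $|\nH N|\le1$ and is precisely what the paper establishes by explicitly computing $\eta(r)=\tfrac{r^2(1-\cos r)}{2(r^2-2r\sin r-2\cos r+2)}$ and checking $\eta(0)=1$, $\eta(\pm2\pi)=0$, and monotonicity in $|r|$; this computation is the technical heart of the proof and cannot be bypassed by the soft Lipschitz bound on $N$. (Two smaller inaccuracies: the set $\{|\nH N|=1\}$ is the hyperplane $\{z=0\}$, not the center $\cZ=\{\xi=0\}$ where in fact $|\nH N|=0$; and your alternative ``argument by contradiction'' fails for the same reason, since \eqref{eq:lehrback} with $c=n^2$ and \eqref{eq:garofalo} are merely two lower bounds for the same energy and can coexist without any saturation being forced.)
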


\begin{proof}
  Recall that the Koranyi norm associated with $\H^n$ is $N=(|\xi|^4+16z^2)^{1/4}$. By Proposition~\ref{prop:basic}(i), we then have
  \begin{equation}
    N\circ\Phi(t,\varpi,r) = \frac{\sqrt{2}t}{|r|}\sqrt[4]{r^2-2r\sin r-2\cos r+2}.
  \end{equation}
With a little abuse of notation we still denote by $N$ the Korany norm in the coordinates $\Phi$.
Since $\delta(\Phi(t,\cdot,\cdot))=t$, $t>0$, for any $\alpha\in\bR$ we have 
\begin{equation}\label{eq:k-pot}
	\frac{|N^{\alpha/2}|^2}{\delta^2} =
2^{\alpha /2}t^{\alpha-2}\left(\frac{\sqrt[4]{r^2-2 r \sin
   (r)-2 \cos (r)+2}}{|r|}\right)^{\alpha } = t^{\alpha-2}\gamma(r)^\alpha.
\end{equation}
Here, $\gamma:[-2\pi,2\pi]\to\R$ is defined by the last equality. Observe that the above is independent of $n$ and $\varpi$. Then, using the orthonormal basis of Proposition~\ref{p:ortho}, we obtain
\begin{equation}
  |\nH N(t,\varpi,r)|^2=\frac{1-\cos r}{\sqrt{2+r^2-2\cos r-2r\sin r}}.
\end{equation}
In particular, for any $\alpha\in\bR$ we have
\begin{equation}\label{eq:k-grad}
	\begin{split}
	|\nabla_\H(N^{\alpha/2})|^2 
&=\frac{\alpha^2}{4}\frac{r^2 (1-\cos r)}{2 \left(r^2-2 r \sin (r)-2 \cos
   (r)+2\right)} \frac{|N^{\alpha/2}|^2}{\delta^2}\\
&=\frac{\alpha^2}{4}t^{\alpha-2}\gamma(r)^\alpha\eta(r).
	\end{split}
\end{equation}
Here, $\eta$ is defined by the last equality, and is independent of $\alpha$.

Observe that both $\gamma$ and $\eta$ are non-negative continuous function. Since one can check that $\gamma\ge1/\sqrt{\pi}$, $\gamma^\alpha\eta$ is integrable w.r.t.\ $\mu(r)\,dr$ for any $\alpha\in\R$.
In particular, this implies that $|N^{\alpha/2}|^2\delta^{-2}$ and $|\nH(N^{\alpha/2})|^2$ are integrable on $[1,+\infty)\times(-2\pi,2\pi)$ w.r.t.\ $t^{2n+1}\mu(r)\,dt\,dr$ if and only if $\alpha<-2n$. 

Now, let us fix a smooth function $\chi:\mathbb R_+\to [0,1]$ such that $\chi|_{[0,1/2]}\equiv0$ and $\chi|_{[1,+\infty]}\equiv 1$. Then, for $\alpha<-2n$, we let
\begin{equation}
  u_\alpha\circ\Phi(t,\theta,r) = 
  \begin{cases}
    \chi(t) N^{\alpha/2}(1,\theta,r), &\qquad \text{ if } t\le 1,\\
    N^{\alpha/2}(t,\theta,r), &\qquad \text{ otherwise}.\\
  \end{cases}
\end{equation}
Then $u_\alpha$ can be extended by continuity to the whole $\H^n$.
By definition of $\chi$, \eqref{eq:k-pot}, and \eqref{eq:k-grad}, for any $\alpha<-2n$ there exists $(v_k)_k\subset C^\infty_c(\H^1)$ such that
\begin{equation}
  \lim_{k\to +\infty}\int_{\H^n} \frac{|v_k|^2}{\delta^2}\,dp = \int_{\H^n} \frac{|u_\alpha|^2}{\delta^2}\,dp ,
  \qquad
  \lim_{k\to +\infty}\int_{\H^n} |\nH v_k|^2\,dp = \int_{\H^n} |\nH u_\alpha|^2\,dp.
\end{equation}
In particular, by Proposition~\ref{prop:H1}, we have
\begin{equation}
  c_n \le \inf \left\{ \frac{\int_{\H^1} |\nH u_\alpha|^2\,dp}{\int_{\bH^1} \frac{u_\alpha^2}{\delta^2}} :\: \alpha\in[-2n+1,-2n)\right\}.
\end{equation}
Let us estimate the quotient above.
By \eqref{eq:k-pot}, we have
\begin{equation}\label{eq:u-den}
  \int_{\H^n}\frac{|u_\alpha|^2}{\delta^2}\,dp \ge \int_{\delta\ge 1}\frac{|u_\alpha|^2}{\delta^2}\,dp = |\S^{2n-1}|\int_{1}^{+\infty} t^{\alpha+2n-1}\,dt\int_{-2\pi}^{2\pi}\gamma^\alpha\mu\,dr.
\end{equation}
Observe that the integral in $t$ on the r.h.s.\ goes to $+\infty$ as $\alpha\to (-2n)^-$.
Moreover, $N^{\alpha/2}|_{t=1}$ and $\partial_r(N^{\alpha/2})|_{t=1}$ are uniformly bounded from above for $\alpha\in[-2n+1,-2n)$. As a consequence, there exists a constant $C>0$ such that $|\nabla_\bH u_\alpha|^2\le C$ on $\{\delta\le 1\}$. In particular, by \eqref{eq:k-grad}, we obtain
\begin{equation}\label{eq:u-num}
  \int_{\H^1}{|\nH u_\alpha|^2}\,dp \le C \mathcal L^3(\{0\le \delta\le 1\}) + \frac{\alpha^2}4|\S^{2n-1}| \int_{1}^{+\infty} t^{\alpha+2n-1}\,dt \int_{-2\pi}^{2\pi}\gamma^\alpha\eta\mu\,dr.
\end{equation}
Taking the quotient of \eqref{eq:u-num} and \eqref{eq:u-den}, and passing to the limit as $\alpha\to -2n$, yields
\begin{equation}\label{eq:ch}\begin{split}
  c_n\le n^2\frac{ \int_{-2\pi}^{2\pi}\gamma^{-2n}\eta\mu\,dr}{\int_{-2\pi}^{2\pi}\gamma^{-2n}\mu\,dr}.
  \end{split}
\end{equation}
Here, we passed to the limit under the integral signs by dominated convergence.
Simple computations show that $\eta(0)=1$, $\eta(\pm2\pi)=0$, and that $\eta$ is monotone decreasing in $|r|$. Hence, for any $a>0$, it holds
\begin{equation}
\int_{|r|>a}\gamma^{-2n}\eta\mu\,dr < \eta(a)\int_{|r|>a}\gamma^{-2n}\mu\,dr,
\quad\text{and}\quad
\int_{|r|\le a}\gamma^{-2n}\eta\mu\,dr \le \int_{|r|\le a}\gamma^{-2n}\mu\,dr.
\end{equation}
Since $\eta(a)<1$ and $\int_{|r|>a}\gamma^{-2n}\mu\,dr>0$, together with \eqref{eq:ch}, the above yields the statement.
\end{proof}
\begin{rem}
  The proofs of Propositions~\ref{p:radial-const} and \ref{p:koranyi} are obtained by considering two different sequences of functions. It is interesting to note that it does not seem possible to build a single sequence yielding both bounds at the same time.
\end{rem}

\section{Non-radial Hardy inequalities on homogeneous cones}\label{sec:hardy-vert}

In this section we prove Theorems~\ref{thm:hardy-intro1} and \ref{thm:hardy-intro2}. To this aim we need the following.

\begin{lem}\label{prop:null_div}
Let $V\in\Gamma(\H^ n\setminus\cZ)$ be given by $\Phi^ * V(r,\varpi,t)=\varphi(r,t) \tilde J\varpi$ where $\varphi:\R_+\times (-2\pi,2\pi)\to \R$ is integrable w.r.t.\ $t^ {2n+1}\mu\,dtdr$. Then, for any $f\in C^\infty_c(\H^ n)$ it holds
\begin{equation}
\int_{\H^n} Vf\,dp = 0.
\end{equation}
\end{lem}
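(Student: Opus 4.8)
The plan is to show that $V$ is (minus) the formal adjoint of something, or more directly that $\int Vf\,dp = 0$ because $V$ has vanishing divergence against the Lebesgue measure, i.e.\ $\dive V = 0$ in the distributional sense. Concretely, I would compute $\Phi_*V$ explicitly. From the structure hypothesis $\Phi^*V = \varphi(r,t)\,\tilde J\varpi$, Proposition~\ref{prop:basic}(iv), and the expression of $D\Phi$ in Proposition~\ref{prop:basic}(ii), one obtains $\Phi_*V$ as a genuine (horizontal) vector field on $\H^n\setminus\cZ$; the point is that, written in Euclidean coordinates $(\xi,z)$, its components should turn out to be of a form whose Euclidean divergence vanishes. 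Since the Haar measure is the Lebesgue measure, and since $f$ is compactly supported (so no boundary term at infinity), the integral $\int_{\H^n} Vf\,dp = -\int_{\H^n} f\,\dive V\,dp$ plus a boundary contribution; if $\dive V \equiv 0$ away from $\cZ$ and the flux across small tubes around $\cZ$ vanishes, we are done.

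The cleaner route, which I would actually follow, is to work entirely in $\Phi$ coordinates and use Proposition~\ref{prop:basic}(iii), namely $\Phi^*\cL^{2n+1} = t^{2n+1}\mu(r)\,dt\,d\varpi\,dr$. Writing $V$ in the orthonormal frame of Proposition~\ref{p:ortho}, the hypothesis $\Phi^*V = \varphi\,\tilde J\varpi$ forces $V$ to be a combination of the "rotational" fields $V_3,\dots,V_{2n}$ only (those are the fields whose $\Phi$-pushforward points along $W_j \in \varpi^\perp\cap(\tilde J\varpi)^\perp$ — wait, more precisely $\tilde J\varpi$ spans the remaining rotational direction together with the $W_j$, so $V$ is tangent to the spheres $\partial B_t$ and to the orbits of rotations about $\cZ$). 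The key computation is then: for such $V$, the function $\Phi^*(Vf)$ integrated against $t^{2n+1}\mu(r)\,dt\,d\varpi\,dr$ vanishes. Since $V$ differentiates only in the $\varpi$ variable (it has no $\partial_t$ or $\partial_r$ component), and the volume density $t^{2n+1}\mu(r)$ is independent of $\varpi$, this reduces to $\int_{\bS^{2n-1}} (Wh)\,d\varpi = 0$ for the relevant tangent vector field $W$ on the sphere and any smooth $h$ — which holds provided $W$ is divergence-free on $\bS^{2n-1}$ with respect to $d\varpi$. The field $\varpi\mapsto \tilde J\varpi$ is the generator of a one-parameter group of isometries (block rotations) of $\bS^{2n-1}$, hence Killing, hence divergence-free, so $\int_{\bS^{2n-1}} \mathcal{L}_{\tilde J\varpi}(h\,d\varpi) = 0$, i.e.\ $\int_{\bS^{2n-1}}(\tilde J\varpi)(h)\,d\varpi = 0$.

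So the steps, in order, are: (1) identify $\Phi_*V$ precisely and check it is a well-defined smooth horizontal field on $\H^n\setminus\cZ$; (2) pull back the integral $\int_{\H^n} Vf\,dp$ to $U$ using Proposition~\ref{prop:basic}(iii), noting the integrability hypothesis on $\varphi$ guarantees this is legitimate and that $f\circ\Phi$ is compactly supported away from... well, $f$ need not vanish near $\cZ$, so one must be slightly careful here — see the obstacle below; (3) observe $V$ acts only by differentiation in $\varpi$, with $\varphi(r,t)$ a multiplier independent of $\varpi$, reducing the $\varpi$-integral to $\int_{\bS^{2n-1}}(\tilde J\varpi)(f\circ\Phi)\,d\varpi$; (4) conclude this vanishes because $\varpi\mapsto\tilde J\varpi$ is a Killing field of $(\bS^{2n-1},d\varpi)$, equivalently $\dive_{\bS^{2n-1}}(\tilde J\varpi)=0$, so the integral of any total derivative along it is zero.

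The main obstacle I anticipate is the behavior near the center $\cZ=\{\xi=0\}$, where the $\Phi$ coordinates are singular (this corresponds to $r\to\pm 2\pi$) and where $f\in C^\infty_c(\H^n)$ need not vanish. One must justify that restricting the integral to $\H^n\setminus\cZ$ introduces no error and that Fubini applies on $U$. This is exactly where the integrability hypothesis "$\varphi$ integrable w.r.t.\ $t^{2n+1}\mu\,dt\,dr$" enters: it lets one dominate $\Phi^*(Vf)$ (since $f$ and its derivatives are bounded) by an $L^1(U, t^{2n+1}\mu\,dt\,d\varpi\,dr)$ function, so Fubini is valid, the $\varpi$-integration can be done first for a.e.\ $(t,r)$, and it yields $0$ — after which integrating in $(t,r)$ gives $0$. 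One should also note $\cZ$ is Lebesgue-null so it contributes nothing to the original integral. I would spell out this domination argument carefully, as it is the only non-formal point; the rest is the Killing-field observation, which is immediate.
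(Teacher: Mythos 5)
Your proposal is correct and follows essentially the same route as the paper: pull the integral back to the $\Phi$-coordinates, note that the density $t^{2n+1}\mu(r)$ and the multiplier $\varphi(t,r)$ are independent of $\varpi$, and reduce everything to the vanishing of $\dive_{\bS^{2n-1}}(\tilde J\varpi)$ via the divergence theorem on the sphere. The only real difference is that the paper verifies this last fact by an explicit computation with $\iota_{\mathcal V}\Omega$ and $d(\iota_{\mathcal V}\Omega)$, whereas your Killing-field observation (the flow of $\varpi\mapsto\tilde J\varpi$ is $s\mapsto e^{s\tilde J}$, an isometry group preserving $d\varpi$) yields it immediately --- a genuine simplification --- and your attention to Fubini and the singularity at $\cZ$ makes explicit a point the paper leaves implicit.
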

\begin{proof}
Let $\mathcal V\in\Gamma(U)$ be defined as $\mathcal V(t,\varpi,r)=(0,\tilde J\varpi,0)$. Then,
  \begin{equation}\label{eq:vale}
  \int_{\R^{2n+1}} Vf\;dp=  \int_0^\infty\varphi(r,t) t^{2n+1}\int_{-2\pi}^{2\pi}\mu(r)\int_{\mathbb S^{2n-1}} \mathcal V (\Phi^*f)\;d\varpi dt dr.
  \end{equation}
By the divergence theorem we get 
\begin{equation}
\int_{\mathbb S^{2n-1}} \mathcal V(\Phi^*f)\;d\varpi=-\int_{\mathbb S^{2n-1}}(\Phi^*f)\operatorname{div}_{\bS^ {2n-1}} \mathcal V\;d\varpi.
\end{equation}
Thus, in order to prove the statement it suffices to show that $\operatorname{div}_{\bS^ {2n-1}} \mathcal V\equiv 0$.

Henceforth, with abuse of notation, we denote the volume form on $\mathbb S^{2n-1}$, as defined in \eqref{eq:Omega}, by $\Omega$. 
The restriction of the vector field $\mathcal V$ to $\Gamma(\mathbb S^{2n-1})$, still denoted by $\mathcal V$, reads
\begin{equation}
\mathcal   V(\varpi) = (\varpi_1^2, -\varpi_1^1, \ldots, \varpi_n^2,-\varpi_n^1)=\sum_{k=1}^{n}\sum_{\ell=1}^2(-1)^{\ell-1}\varpi_k^{\ell+1}\partial_{\varpi_k^\ell},
\end{equation}
where by convention we let $\ell+1 = \ell+1\mod 2$. 
Recall that, by definition, $(\operatorname{div}_{\bS^{2n-1}}V)\Omega = d(\iota_{\mathcal V}\Omega)$.
Since $d \varpi_{k}^\ell(\mathcal V) = (-1)^{\ell-1} \varpi_{k}^{\ell+1}$, we have
\begin{equation}
  \alpha_k:=\iota_{\mathcal V} (d\varpi_k^1\wedge d\varpi_k^2) = \varpi_k^1 d\varpi_k^1 + \varpi_k^2 d\varpi_k^2.
\end{equation}
Henceforth we let, for simplicity, $d\varpi_k := d\varpi_k^1\wedge d\varpi_k^2$.
By the properties of the contraction operator we can then compute:
\begin{equation}
  \begin{split}
      \iota_{\mathcal V}\big( d\varpi_{1}^1\wedge & \ldots \wedge \widehat{d\varpi_k^\ell} \wedge\ldots \wedge d\varpi_{n}^2 \big) \\
      &= \iota_{\mathcal V}\left( d\varpi_{k}^{\ell+1} \wedge d\varpi_{1}\wedge\ldots \wedge \widehat{d\varpi_k} \wedge\ldots \wedge d\varpi_{n}  \right)\\
      &\qquad- d\varpi_k^{\ell+1}\wedge \iota_{\mathcal V}( d\varpi_{1}\wedge\ldots \wedge \widehat{d\varpi_k} \wedge\ldots \wedge d\varpi_{n})\\
      &= (-1)^{\ell} \varpi_k^\ell d\varpi_{1}\wedge\ldots \wedge \widehat{d\varpi_k} \wedge\ldots \wedge d\varpi_{n} \\
      &\qquad- \sum_{i\neq k}d\varpi_k^{\ell+1} \wedge d\varpi_1\wedge\ldots\wedge \alpha_i \wedge\ldots \wedge\widehat{d\varpi_k}\wedge\ldots\wedge d\varpi_n
  \end{split}
\end{equation}
Thus, we obtain $\iota_{\mathcal V}\Omega=-(A+B)/2n$, where
\begin{gather}
  A = \sum_{k=1}^n\sum_{\ell = 1}^2 (\varpi_k^\ell)^2 d\varpi_{1}\wedge\ldots \wedge \widehat{d\varpi_k} \wedge\ldots \wedge d\varpi_{n},\\
  B= \sum_{k=1}^n\sum_{\ell = 1}^2 \sum_{i\neq k} (-1)^{\ell-1}  \varpi_k^{\ell}d\varpi_k^{\ell+1} \wedge d\varpi_1\wedge\ldots\wedge \alpha_i \wedge\ldots \wedge\widehat{d\varpi_k}\wedge\ldots\wedge d\varpi_n.
\end{gather}

Let us now compute $d(\iota_{\mathcal V}\Omega)$. By the properties of $d$ and since $d\alpha_i = 0$, we have
\begin{gather*}
  dA = \sum_{k=1}^n\sum_{\ell = 1}^2 2\varpi_k^\ell d\varpi_k^\ell\wedge d\varpi_{1}\wedge\ldots \wedge \widehat{d\varpi_k} \wedge\ldots \wedge d\varpi_{n} = 2\sum_{k=1}^n d \varpi_1\wedge \ldots\wedge \alpha_k \wedge\ldots\wedge d\varpi_n,\\
  dB
  = 2(n-1)\sum_{i=1}^n d \varpi_1\wedge \ldots\wedge \alpha_i \wedge\ldots\wedge d\varpi_n
\end{gather*}
Thus,
\begin{equation}
  \begin{split}
  d\left(\iota_{\mathcal V}\Omega\right) 
  &=-{2}\sum_{i=1}^n d \varpi_1\wedge \ldots\wedge \alpha_i \wedge\ldots\wedge d\varpi_n\\
  \end{split}
\end{equation}

Observe that we need to compute the above $2n-1$ form on vectors tangent to the sphere. For any $v\in T_\varpi\bS^{2n-1}$, we have $\sum_{i=1}^n\alpha_i(v)= \langle\varpi,v\rangle_{\R^{2n}}=0$, which yields
\begin{equation}
  \alpha_i|_{T\bS^{2n-1}} = -\sum_{j\neq i} \alpha_j|_{T\bS^{2n-1}}.
\end{equation}
Together with the fact that $\alpha_j\wedge d\varpi_j=0$, this implies that
\begin{equation}
  d\left(\iota_{\mathcal V}\Omega\right)|_{T\bS^{2n-1}} = {2}\sum_{i=1}^n\sum_{j\neq i} d \varpi_1\wedge \ldots\wedge \underbrace{\alpha_j}_{\text{$i$-th position}}  \wedge\ldots\wedge d\varpi_n = 0,
\end{equation}
completing the proof of the statement.
\end{proof}

\begin{proof}[Proof of Theorem~\ref{thm:hardy-intro2}]
  Let $f\in C^\infty_c(\H^n)$. By Lemma~\ref{prop:null_div} and the expression of $\Xi$ in $\Phi$-coordinates given in Proposition~\ref{p:ortho}, we get
  \begin{equation}
    \int_{\H^n} \Xi f\,dp 
          = \int_{0}^{+\infty} \int_{\bS^{2n-1}}\left(\int_{-2\pi}^{2\pi} \frac{rw(r)}{t}(\partial_r f\circ\Phi)|_{(t,\varpi,r)}\mu(r)\,dr\right)\,d\varpi\,t^{2n+1}\,dt\\
  \end{equation}
  Then, by Remarks~\ref{rmk:rwm} and \ref{p:rwm}, an integration by parts yields
  \begin{equation}
          \int_{\H^n} \Xi f\,dp 
          = n\int_{0}^{+\infty} \int_{\bS^{2n-1}}\left(\int_{-2\pi}^{2\pi} \frac rt f\circ\Phi(t,\varpi,r)\mu(r)\,dr\right)\,d\varpi\,t^{2n+1}\,dt
          = n\int_{\H^n} \frac{f}{\delta}\psi\,dp.
  \end{equation}
  Here, no boundary terms appear since $\lim_{r\to\pm2\pi}w(r)=0$.
  By density the above holds for any Lipschitz function compactly supported outside the origin. In particular, letting $f = u^2/\delta$ where $u\in C^ \infty_c(C_\Sigma)$, the Cauchy-Schwarz inequality implies
   \begin{equation}
    \frac n2\int_{C_\Sigma} \frac{u^2}{\delta^2}\psi\,dp \le \int_{C_\Sigma} \frac{|u|}{\delta}|\Xi u|\,dp \le \left(\int_{C_\Sigma} \frac{u^2}{\delta^2}\psi\,dp\right)^{1/2}\left(\int_{C_\Sigma} \frac{|\Xi u|^2}{\psi}\,dp\right)^{1/2}.
  \end{equation}
  Here we used that $C_\Sigma \subset\{\psi>0\}$.
  By construction, we have that $|\Xi u|^2\le |{\nH}^\perp u|^2$, and thus the above yields \eqref{eq:hardy2}.
  
  We now turn to the proof of the sharpness. Recall that, in $\Phi$-coordinates, there exists $\rho_\Sigma\in(0,2\pi)$ such that $C_\Sigma = \{\Phi(t,\varpi,r)\mid r>\rho_\Sigma\}$. Let $\rho>\rho_\Sigma$ and $\eta>0$ be sufficiently small, and consider a cut-off function $\chi:(\rho_\Sigma,2\pi)\to [0,1]$ such that
  \begin{equation}
    \chi|_{(\rho_\Sigma,\rho)}\equiv 0, \qquad \chi|_{(\rho+\eta,2\pi)}\equiv 1.
  \end{equation}
  Moreover, consider $0<t_1<t_2<+\infty$ and fix a function $\varphi\in C_c^\infty((0,+\infty))$ with $\supp\varphi\subset[t_1,t_2]$. Let $\gamma>-1/2$ and define
  \begin{equation}
    v(r):= \chi(r) \left( rw(r)\mu(r) \right)^\gamma, 
    \qquad
    u\circ\Phi(t,\varpi,r) = \varphi(t) v(r).
  \end{equation}
  The above definition for $u$ can be extended by continuity to the whole $\H^n$, since $\lim_{r\to2\pi}v(r)=0$. Observe that $v \in L^2([\rho_\Sigma,2\pi],\mu\,dr)$ for any $\gamma>-1/2$. Indeed,
  \begin{equation}\label{eq:taylor}
  	v^2\mu \sim  (2\pi)^{-(2n+1)(2\gamma+1)}\pi^{2\gamma} (2\pi-r)^{2n(2\gamma+1)-1}
	\quad\text{ as }r\to(2\pi)^-.
  \end{equation}
 Moreover, we have $\supp u\subset B_{t_2}\setminus B_{t_1}$ and $|\nH^\perp u|^2 = |\Xi u|^2$, so that, by Proposition~\ref{prop:H1}, 
  \begin{equation}\label{eq:sharp-proof}
    c_n^\perp(\Sigma,\psi)\le R_u:= \frac{\int_{C_\Sigma} \psi^{-1}|\Xi u|^2\,dp}{\int_{C_\Sigma} \psi\frac{u^2}{\delta^2}\,dp}.
  \end{equation}

  By Remark~\ref{rmk:rwm}, it holds that $v'= (rw\mu)^\gamma(\chi'-n\gamma\chi w^ {-1})$. Thus, by the $\Phi$-coordinate expression of $\Xi$ given in Proposition~\ref{p:ortho}, we have
  \begin{equation}
    |\Xi u|\circ\Phi = 
    \begin{cases}
    	0 & \text{ for }  r\in [\rho_\Sigma,\rho),\\
	{\varphi}{\delta^{-1}}r(rw\mu)^{\gamma}(\chi'w-n\gamma\chi)& \text{ for }  r\in [\rho,\rho+\eta),\\
	n\gamma r{|u|}{\delta^{-1}}& \text{otherwise}.
    \end{cases}
  \end{equation}
Recalling that $\psi\circ\Phi=r$ and that $dp = t^{2n-1}\mu \,d\varpi\,dt\,dr$, we get
  \begin{equation}\label{eq:ru-sharp}
      R_u \le \frac{\int_{\rho}^{\rho+\eta}r(rw\mu)^{2\gamma}(\chi'w-n\gamma\chi)^2 \mu\,dr}{\int_{\rho_\Sigma}^{2\pi}  v^2\psi \,\mu\,dr } + n^2\gamma^2.
  \end{equation}
  Observe that there exists $C>0$ such that, for $\gamma\ge -1/2$, we have
  \begin{equation}
  	\int_{\rho}^{\rho+\eta}(rw\mu)^{2\gamma}(\chi'w-n\gamma\chi)^2 \,\psi^{-1}\mu\,dr \le Ce^\gamma\eta,.
  \end{equation}
  The statement then follows by \eqref{eq:sharp-proof} and \eqref{eq:ru-sharp}. Indeed, thanks to \eqref{eq:taylor}, it holds
  \begin{equation}
    \lim_{\gamma\to (-1/2)^+}\int_{\rho_\Sigma}^{2\pi}  v^2\psi \,\mu\,dr = +\infty.\qedhere
  \end{equation}
\end{proof}

\begin{proof}[Proof of Theorem~\ref{thm:hardy-intro1}]
  Observe that $C_\Sigma = \{\rho_\Sigma< \psi\le 2\pi\}$. Thus, for any $u\in C^\infty_c(C_\Sigma)$, it holds
  \begin{equation}
    \rho_\Sigma^2\frac{\int_{C_\Sigma} \psi^{-1}|\nH^\perp u|^2\,dp}{\int_{C_\Sigma} \psi\frac{u^2}{\delta^2}\,dp} 
    < \frac{\int_{C_\Sigma} |\nH^\perp u|^2\,dp}{\int_{C_\Sigma} \frac{u^2}{\delta^2}\,dp}
    \le 4\pi^2 \frac{\int_{C_\Sigma} \psi^{-1}|\nH^\perp u|^2\,dp}{\int_{C_\Sigma} \psi \frac{u^2}{\delta^2}\,dp}
  \end{equation}
  By definition of $c_n^\perp(\Sigma)$ and $c_n^\perp(\Sigma,\psi)$, thanks to Theorem~\ref{thm:hardy-intro2} taking the infimum for $u\in C^\infty_c(C_\Sigma)$ in the above yields the statement.
\end{proof}

\section{Full Hardy constant on homogeneous cones}\label{sec:santalo}

Let $\Sigma\subset \S_{+}^{2n}$ be a spherical cap. Recall that the associated homogeneous cone $C_\Sigma$ is uniquely identified by a parameter $\alpha_\Sigma>0$ such that $C_\Sigma=\{(x,y,z)\in\R^{2n+1}\mid |x|^2+|y|^2<\alpha_\Sigma z\}$. This section is devoted to the proof of the following result on the full Hardy constant $c_n(\Sigma)$, defined in \eqref{eq:full-hardy}. Its proof is based on a consequence of the sub-Riemannian Santal\'o formula, presented in \cite{prandi2019}.

\begin{thm}\label{thm:santalo}
  Let $\Sigma\subset \S_{+}^{2n}$ be a spherical cap. Then,
  \begin{equation}
      c_n(\Sigma) \ge \frac{n}{\alpha_\Sigma}\frac{16\pi^3}{16+\alpha_\Sigma^2}.
  \end{equation}
  In particular, $c_n(\Sigma)\rightarrow+\infty$ as $\Sigma$ degenerates to a point.
\end{thm}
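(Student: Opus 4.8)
The plan is to derive the bound on $c_n(\Sigma)$ from a lower bound on the bottom of the spectrum of the Dirichlet sub-Laplacian on $C_\Sigma$, obtained via the sub-Riemannian Santaló formula of \cite{prandi2019}. Recall that, by the homogeneity of $\delta$ and the scaling $\varrho_\lambda$, the Hardy quotient $\int_{C_\Sigma}|\nH u|^2 / \int_{C_\Sigma} |u|^2\delta^{-2}$ is $0$-homogeneous, so one may work on the ``unit spherical shell'' of $C_\Sigma$, or equivalently relate the Hardy constant to a geometric eigenvalue on $\partial B_1 \cap C_\Sigma$ (a ``sub-Riemannian cap'' inside the Carnot-Carathéodory sphere). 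Concretely, I would first reduce to estimating the quantity $\lambda_1^{\mathrm{sR}}(C_\Sigma) := \inf \int_{C_\Sigma}|\nH u|^2\,dp$ over $u$ with $\int_{C_\Sigma} |u|^2 \delta^{-2}\,dp = 1$, exactly the definition of $c_n(\Sigma)$; the point is to bound this from below by a spectral quantity that can be controlled by the geometry of $\Sigma$.

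The key input is the sub-Riemannian Santaló formula, which expresses the $L^2$-norm of a function on a domain $\Omega$ as an integral over the unit cotangent bundle of the exit time of the sub-Riemannian geodesic flow, together with the attendant one-dimensional Hardy inequality along each geodesic. The strategy, following \cite{prandi2019}, is: (i) for each point $p\in C_\Sigma$ and each initial horizontal covector, consider the geodesic issued from $p$; since $C_\Sigma = \{|\xi|^2 < \alpha_\Sigma z\}$ is a paraboloid with the origin as its unique characteristic point, every geodesic starting inside $C_\Sigma$ hits $\partial C_\Sigma$ after a bounded ``length'' — and the crucial geometric fact is that this exit length is uniformly controlled in terms of $\alpha_\Sigma$ and the starting distance $\delta(p)$; (ii) apply the classical one-dimensional Hardy inequality $\int_0^L |f'|^2\,ds \ge \frac14\int_0^L |f|^2 s^{-2}\,ds$ (or its variant adapted to the line issued from the origin) along each such geodesic chord; (iii) integrate over the unit cotangent bundle using the Santaló formula to recover $\int_{C_\Sigma}|\nH u|^2$ on the left and $\int_{C_\Sigma}|u|^2\delta^{-2}$ on the right, with a constant that is the infimum over chords of the one-dimensional constant. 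The explicit shape of the answer, $\frac{n}{\alpha_\Sigma}\frac{16\pi^3}{16+\alpha_\Sigma^2}$, strongly suggests that one computes, via the explicit optimal synthesis and the function $\phi$, the maximal ``angular aperture'' $r$ attainable inside $C_\Sigma$ (namely $\rho_\Sigma = \phi^{-1}(\alpha_\Sigma)$, with $\rho_\Sigma \to 2\pi$ as $\alpha_\Sigma\to 0$), and that the Korányi-norm relation $N^4 = |\xi|^4 + 16 z^2$ enters through the factor $16 + \alpha_\Sigma^2$ when one bounds $\delta$ from below by a multiple of $N$ on $C_\Sigma$; the factor $n$ reflects the dimension-dependent weight $\mu(r)$ and the $2n-1$ ``spherical'' directions. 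So in practice I expect the computation to combine a Santaló-type integral geometry estimate with the explicit coordinates of Section~\ref{sec:prelim}.

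An alternative, more hands-on route that avoids the full machinery: integrate by parts against the vector field $\nH\delta$ (the Eikonal field). Using $\mathrm{div}(\nH\delta \cdot \delta^{-1})$ and the fact, derivable from Lemma~\ref{lem:pullback-nabla-delta} and Proposition~\ref{prop:basic}(iii), that the divergence of $\nH\delta$ with respect to the volume form behaves like $(2n+1)/\delta$ plus a correction coming from $\partial_t \log(t^{2n+1}\mu) = (2n+1)/t$ — and crucially a term involving $\partial_r$ of the pushforward — one gets, for $f = u^2/\delta$,
\begin{equation}
  \int_{C_\Sigma} \langle \nH u, \nH\delta\rangle \frac{u}{\delta}\,dp
  = \frac12 \int_{C_\Sigma} \langle \nH(u^2/\delta), \nH\delta\rangle\,dp
  = \frac12 \int_{C_\Sigma} \frac{u^2}{\delta}\,\mathrm{div}_{g_\Sigma}(\nH\delta)\,dp,
\end{equation}
where the boundary term on $\partial C_\Sigma$ has a sign (because $\nH\delta$ points outward through the paraboloid). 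Combining the positivity of that boundary contribution with an explicit lower bound on $\mathrm{div}(\nH\delta/\delta)$ on $C_\Sigma$ — which again is where $\alpha_\Sigma$ and the $16+\alpha_\Sigma^2$ factor materialize — and then Cauchy--Schwarz as in the proof of Theorem~\ref{thm:hardy-intro2}, yields $\int_{C_\Sigma}|\nH u|^2 \ge \int_{C_\Sigma}|\langle\nH u,\nH\delta\rangle|^2 \ge c \int_{C_\Sigma} u^2\delta^{-2}$.

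The main obstacle I anticipate is controlling the boundary term on the paraboloid $\partial C_\Sigma$ and, relatedly, getting a \emph{uniform in $p$} lower bound on the relevant quantity (the geodesic exit length, or equivalently $\mathrm{div}(\nH\delta/\delta)$) that does not degenerate — the origin is a characteristic point, so the geometry near $0$ is delicate and the naive one-dimensional Hardy chord may have length tending to $0$ there; one must check that the ratio (chord length)/$\delta(p)$ stays bounded below. This is precisely the technical content imported from \cite{prandi2019}, so I would lean on their Santaló-formula consequence as a black box rather than redo it, and concentrate the proof on the explicit geometric computation of the resulting constant in terms of $\alpha_\Sigma$ using Proposition~\ref{prop:basic} and the function $\phi$ of \eqref{eq:phi}. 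The final claim that $c_n(\Sigma)\to+\infty$ as $\Sigma$ degenerates to a point is then immediate, since $\alpha_\Sigma\to 0^+$ forces $\frac{n}{\alpha_\Sigma}\frac{16\pi^3}{16+\alpha_\Sigma^2}\to+\infty$.
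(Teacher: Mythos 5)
Your first route is pointed in the right direction --- the paper does invoke the Santal\'o-formula consequence of \cite{prandi2019} as a black box --- but the two essential ingredients are not the ones you describe, and neither is actually carried out. First, the inequality imported from \cite{prandi2019} is not a one-dimensional Hardy inequality with weight $s^{-2}$ along geodesic chords; it is the one-dimensional \emph{Dirichlet--Poincar\'e} inequality $\int_0^L|f'|^2\ge (\pi/L)^2\int_0^L|f|^2$ applied along straight horizontal lines (which are geodesics), yielding
$\int_{C_\Sigma}|\nH u|^2\,dp\ge 2n\pi^2\int_{C_\Sigma}|u|^2 m^{-2}\,dp$,
where $m(p)$ is the maximal length of a horizontal line through $p$ intersected with $C_\Sigma$. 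A $\tfrac14 s^{-2}$ chord inequality would produce a Hardy weight relative to the distance to $\partial C_\Sigma$, not to $\delta$, and even after conversion would not give the stated constant. Second, and more importantly, the entire content of the proof is the geometric estimate $m(p)\le \frac{\sqrt{z_0}}{2}\sqrt{\alpha_\Sigma(16+\alpha_\Sigma^2)}$, obtained by left-translating $C_\Sigma$ by $p^{-1}$, intersecting with the horizontal plane $\{z=0\}=\distr_0$, and computing the Euclidean diameter of the resulting ball by completing a square (this, together with $z_0\le \delta(p)^2/2\pi$, is where every factor in $\frac{n}{\alpha_\Sigma}\frac{16\pi^3}{16+\alpha_\Sigma^2}$ comes from). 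Your attribution of the factor $16+\alpha_\Sigma^2$ to the Kor\'anyi norm is incorrect --- the coincidence of the $16$'s is an artifact of the group-law normalization --- and your stated worry that the ratio $m(p)/\delta(p)$ might fail to be bounded \emph{below} is backwards: what is needed, and what the translation argument delivers, is the upper bound $m(p)\le C_{\alpha_\Sigma}\,\delta(p)$.

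Your proposed ``more hands-on'' alternative, integrating by parts against $\nH\delta$ and concluding via $\int|\nH u|^2\ge\int|\langle\nH u,\nH\delta\rangle|^2\ge c\int u^2\delta^{-2}$, cannot work. That final step is a radial Hardy inequality, and Theorem~\ref{thm:counter-yang} (Proposition~\ref{p:radial-const}) shows $c_n^{\mathrm{rad}}=0$; the obstruction is visible in the $\Phi$-coordinates: the divergence of $\nH\delta=\Phi_*(\partial_t+\frac rt\partial_r)$ with respect to $t^{2n+1}\mu(r)\,dt\,d\varpi\,dr$ contains the term $\frac1t(r\mu)'/\mu$, which tends to $-\infty$ as $r\to 2\pi$, i.e., near the center $\cZ$ --- and every cone $C_\Sigma$ with $\Sigma\subset\bS^{2n}_+$ contains a full neighbourhood of $\cZ\cap\{z>0\}$. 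So the ``explicit lower bound on $\mathrm{div}(\nH\delta/\delta)$ on $C_\Sigma$'' that your argument requires does not exist. In summary: the black box is correctly identified, but the decisive computation is missing and the fallback route is structurally blocked by the paper's own first theorem.
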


\begin{proof}
For any $p\in C_\Sigma$, we identify $\distr_p\subset T_p\bH^n$ with the corresponding plane through $p$ in $\H^n$. Recall that any straight line $\gamma:\R\to\H^n$ through $p$ and contained in $\distr_p$ is a sub-Riemannian length-minimizer from $p$ to any $\gamma(t)$, $t\in \R$. We henceforth denote by $m(p)$ the maximal length of any such geodesic intersected with $C_\Sigma$.

  Let $u\in C^\infty_c(C_\Sigma)$. We apply \cite[Proposition~2, Eq.~(5)]{prandi2019} and the natural reduction procedure given by \cite[Example~2]{prandi2019}, to get
  \begin{equation}\label{eq:santalo}
    \int_{C_\Sigma} |\nH u|^2\,dp
    \ge {2n\pi^2} \int_{C_\Sigma}   \frac{|u|^2}{m^2}\,dp.
  \end{equation}
  Indeed, $m(p)$ is the maximum of the quantity $L$ appearing in \cite[Proposition~2]{prandi2019}. Observe that \cite[Proposition~2]{prandi2019} is valid under a compactness assumption on the ambient space. This can be settled by applying it to any compact set $\Omega\subset C_\Sigma$ with $\supp u \subset \Omega$, since the quantity $L$ computed w.r.t.\ $\Omega$ is bounded from above by the same quantity computed on $C_\Sigma$.

 Let $\tau_{p^{-1}}(q) =  p^{-1}\star q$, $q\in\H^n$, be the left translation w.r.t.\ the Heisenberg group law \eqref{eq:group}.
 By left-invariance of the sub-Riemannian structure on $\H^n$, it holds that for any straight line $\gamma:\R\to\H^n$ contained in $\distr_p$ and such that $\gamma(0)=p$ the curve $\eta=\tau_{p^{-1}}\circ\gamma$ is such that  $\eta(0)=0$ and $\eta(t)\in \distr_0= \{z=0\}$ for any $t\in\R$. In particular,  $m(p)$ coincides with the maximal length of such $\eta$ intersected with $\tau_{p^{-1}}(C_\Sigma)$. Since the sub-Riemannian length of such $\eta$ coincides with the Euclidean one (this can be checked, e.g., via the explicit expression of the geodesics given in Definition~\ref{def:phi-coord}), we have
 \begin{equation}\label{eq:mp}
 	m(p)\le \diam_{\text{eucl}}(\tau_{p^{-1}}(C_\Sigma)\cap \{z=0\}).
 \end{equation}

Letting $p=(x_0,y_0,z_0)$, straightforward computations yield
 \begin{multline}
 \tau_{p^{-1}}(C_\Sigma) = \bigg\{ (x,y,z)\in \R^{2n+1}\mid \left| x+x_0+\frac{\alpha_\Sigma}{4}y_0 \right|^2 +\left| y+y_0-\frac{\alpha_\Sigma}{4}x_0 \right|^2 \\
 < \alpha_\Sigma |z+z_0|^2 +\frac{\alpha_\Sigma^2}{16} (|x_0|^2+|y_0|^2)\bigg\}.
 \end{multline}
 Using this in \eqref{eq:mp}, since $|x_0|^2+|y_0|^2<\alpha_\Sigma z_0$, we obtain
 \begin{equation}\label{eq:mp2}
 	m(p)\le 2\sqrt{\alpha_\Sigma z_0^2 +\frac{\alpha_\Sigma^2}{16} (|x_0|^2+|y_0|^2)}\le \frac{\sqrt{z_0}}2\sqrt{\alpha_\Sigma (16+\alpha_\Sigma^2)}.
 \end{equation}
 The statement then follows by \eqref{eq:santalo}, \eqref{eq:mp2}, and the fact that, by Definition~\ref{def:phi-coord}, it holds
 \begin{equation}
   z_0\le \delta(p)^2 \max_{r\in(-2\pi,2\pi)}\frac{r-\sin r}{2r^2} =\frac{\delta(p)^2}{2\pi}. \qedhere
 \end{equation}
 \end{proof}

\section{An alternative proof of the Hardy inequality  \eqref{eq:garofalo}}\label{sec:garofalo}

In this section, we start by proposing a fix for the argument of \cite{Yang2013} in Lemma~\ref{l:yang}, and then we show that this yields a different proof of the classical Hardy inequality \eqref{eq:garofalo} by Garofalo and Lanconelli in Proposition~\ref{p:garofalo}. 

To this aim, we define the vector field $T\in \Gamma(\H^n\setminus\cZ)$ by
\begin{equation}\label{eq:T}
T = \nH\delta -{(\Phi_*w)^ {-1}}\Xi.
\end{equation}
Here, $\Xi$ is the polar vector field of Definition~\ref{d:polar} and $w$ is the function defined in Proposition~\ref{p:ortho}. 
Then a correct version of \cite[Lemma~3.1]{Yang2013} is the following.

\begin{lem}\label{l:yang}
	Let $0<R_1<R_2$ and $f\in C^1((B_{R_2}\setminus B_{R_1})\setminus\cZ)$. Then,
	\begin{equation}
		\int_{\partial B_1} f(\varrho_{R_2}p)\,d\sigma(p) - \int_{\partial B_1} f(\varrho_{R_1}p)\,d\sigma(p) = \int_{B_{R_2}\setminus B_{R_1}} \langle \nH f, T\rangle\frac{1}{\delta^{2n+1}}\,dp.
	\end{equation}
	Here, we let $B_t=\{\delta<t\}$ and denoted by $d\sigma$ the surface measure of $\partial B_1$.
\end{lem}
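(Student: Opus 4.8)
The strategy is to pass everything to $\Phi$-coordinates and recognize the integral as a total $t$-derivative. Recall from Proposition~\ref{prop:basic}(iii) that $\Phi^*\mathcal L^{2n+1} = t^{2n+1}\mu(r)\,dt\,d\varpi\,dr$, and from Lemma~\ref{lem:pullback-nabla-delta} and Proposition~\ref{p:ortho} that $\Phi^*(\nH\delta) = \partial_t + \frac rt\partial_r$ while $\Phi^*\Xi = V_2 = (0,\frac rt v(r)\tilde J\varpi, \frac rt w(r))$. The crucial point is that the combination $T = \nH\delta - (\Phi_*w)^{-1}\Xi$ is designed so that its $\Phi$-pullback has \emph{no} $\partial_r$-component: indeed $\Phi^*T = \partial_t + \frac rt\partial_r - w(r)^{-1}\cdot\frac rt w(r)\,\partial_r + (\text{spherical part}) = \partial_t + (\text{terms tangent to }\bS^{2n-1})$. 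So $\langle \nH f, T\rangle\circ\Phi = (\Phi^*T)(\Phi^*f) = \partial_t(\Phi^*f) + \mathcal W(\Phi^*f)$, where $\mathcal W$ is a vector field tangent to the sphere factor (coming from the $\tilde J\varpi$-component of $V_2$).

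Next I would compute the right-hand side in these coordinates:
\begin{equation}
  \int_{B_{R_2}\setminus B_{R_1}} \langle\nH f,T\rangle\,\delta^{-2n-1}\,dp = \int_{R_1}^{R_2}\!\int_{-2\pi}^{2\pi}\!\int_{\bS^{2n-1}} \big(\partial_t(\Phi^*f) + \mathcal W(\Phi^*f)\big)\,\frac{t^{2n+1}\mu(r)}{t^{2n+1}}\,d\varpi\,dr\,dt.
\end{equation}
The factor $t^{2n+1}$ cancels exactly against $\delta^{-2n-1}$, which is the whole reason for that weight. The $\mathcal W$-term integrates to zero for each fixed $(t,r)$: $\mathcal W$ is a smooth vector field on $\bS^{2n-1}$ and one checks its divergence w.r.t.\ $\mu(r)\,d\varpi$ vanishes — this is essentially the computation already carried out in Lemma~\ref{prop:null_div} for the field $\varpi\mapsto \tilde J\varpi$, since $\mathcal W$ is a scalar multiple (depending only on $r,t$) of that field, and integration over $\bS^{2n-1}$ of $\mathcal V(\Phi^*f)$ against $d\varpi$ is zero. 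Hence the right-hand side equals
\begin{equation}
  \int_{-2\pi}^{2\pi}\!\int_{\bS^{2n-1}} \Big(\int_{R_1}^{R_2}\partial_t(\Phi^*f)(t,\varpi,r)\,dt\Big)\mu(r)\,d\varpi\,dr = \int_{-2\pi}^{2\pi}\!\int_{\bS^{2n-1}}\big(\Phi^*f(R_2,\cdot) - \Phi^*f(R_1,\cdot)\big)\mu(r)\,d\varpi\,dr.
\end{equation}

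Finally I would identify each of the two terms with the boundary integrals over $\partial B_1$. By Proposition~\ref{prop:basic}(i), $(\varpi,r)\mapsto\Phi(1,\varpi,r)$ parametrizes $\partial B_1\setminus\cZ$, and since $\delta$ is $1$-homogeneous with $\varrho_{R_i}(\Phi(1,\varpi,r)) = \Phi(R_i,\varpi,r)$, we get $f(\varrho_{R_i}p) = \Phi^*f(R_i,\varpi,r)$ on $\partial B_1$. What remains is to check that the surface measure $d\sigma$ on $\partial B_1$ pulls back under $(\varpi,r)\mapsto\Phi(1,\varpi,r)$ to (a constant multiple of) $\mu(r)\,d\varpi\,dr$; this follows from the coarea/disintegration of $\mathcal L^{2n+1} = t^{2n+1}\mu\,dt\,d\varpi\,dr$ against the level sets of $\delta$, normalizing so that $d\sigma$ is the measure induced on $\partial B_1$. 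I expect the main obstacle to be exactly this last bookkeeping — getting the normalization of $d\sigma$ versus $\mu\,d\varpi\,dr$ right, and making sure the argument is legitimate despite the singularity of $\Phi$ along $\cZ$ (handled as in the proof of Proposition~\ref{prop:H1} / Lemma~\ref{prop:null_div}, using that $f\in C^1$ away from $\cZ$ and that $w(r)\to 0$ as $r\to\pm2\pi$ so no contribution is lost at the center).
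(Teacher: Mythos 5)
Your proposal is correct and follows essentially the same route as the paper: the paper likewise computes $\Phi^*T=(1,\tfrac{rv(r)}{tw(r)}\tilde J\varpi,0)$ (no $\partial_r$-component by design), kills the $\tilde J\varpi$-part via Lemma~\ref{prop:null_div}, cancels $\delta^{-(2n+1)}$ against the $t^{2n+1}$ in $\Phi^*\mathcal L^{2n+1}$, and concludes by the fundamental theorem of calculus in $t$, identifying $d\sigma$ with $\mu(r)\,d\varpi\,dr$ on $\partial B_1$. The only difference is cosmetic: you flag the normalization of $d\sigma$ as a step to verify, whereas the paper treats it as immediate.
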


\begin{proof}
By Proposition~\ref{p:ortho}, we have that $\Phi^*T=(1,\frac{rv(r)}{tw(r)}\tilde J\varpi,0)$. Then, by Lemma~\ref{prop:null_div} we have
\begin{equation}
\int_{B_{R_2}\setminus B_{R_1}} \frac{\langle \nH f, T\rangle}{\delta^{2n+1}}\,dp 
= \int_{-2\pi}^{2\pi} \int_{\S^{2n-1}} \left(\int_{R_1}^{R_2} \partial_t(\Phi^*f)\,dt\right)\,d\varpi\,\mu(r)dr.
\end{equation}
Finally, an integration by parts yields
\begin{equation}
	\begin{split}
		 \int_{B_{R_2}\setminus B_{R_1}} \frac{\langle \nH f, T\rangle}{\delta^{2n+1}}\,dp 
	 &= \int_{-2\pi}^{2\pi} \int_{\S^{2n-1}} \left((\Phi^*f)|_{t=R_2}-(\Phi^*f)|_{t=R_1}\right)\,d\varpi\,\mu(r)dr\\
	 &=\int_{\partial B_1} f(\varrho_{R_2}p)\,d\sigma(p) - \int_{\partial B_1} f(\varrho_{R_1}p)\,d\sigma(p) .\qedhere
	\end{split}
\end{equation}
\end{proof}

\begin{prop}\label{p:garofalo}
	For any $u\in C^\infty_c(\H^n\setminus\{0\})$ it holds
	\begin{equation}
		 \int_{\bH^n}|\nH u|^2\,dp \ge n^2 \int_{\bH^n}\frac{|u|^2}{N^2}|\nH N|^2\,dp.
	\end{equation}
	Here, $N(\xi,z):=\sqrt[4]{|\xi|^4+16z^2}$ is the Koranyi gauge.
\end{prop}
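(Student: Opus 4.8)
The plan is to mimic the classical Garofalo--Lanconelli argument, but using Lemma~\ref{l:yang} in place of the usual divergence identity on metric balls, so that the vector field $T$ (rather than $\nH\delta$) plays the role of the dilation generator. First I would test Lemma~\ref{l:yang} against the function $f = N^{-2n}|u|^2$, where $u\in C^\infty_c(\H^n\setminus\{0\})$. Since $N$ is $1$-homogeneous w.r.t.\ $\varrho_\lambda$, the boundary integrals $\int_{\partial B_1} f(\varrho_{R_i}p)\,d\sigma(p)$ scale like $R_i^{-2n}\int_{\partial B_1}|u(\varrho_{R_i}p)|^2 N(p)^{-2n}\,d\sigma(p)$; letting $R_1\to 0$ and $R_2\to+\infty$ both boundary terms vanish because $u$ has compact support away from the origin. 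This yields the global identity
\begin{equation}
  0 = \int_{\H^n} \left\langle \nH\!\left( \frac{|u|^2}{N^{2n}}\right), T\right\rangle \frac{1}{\delta^{2n+1}}\,dp.
\end{equation}

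Next I would expand the gradient: $\nH(N^{-2n}|u|^2) = -2n N^{-2n-1}|u|^2 \nH N + 2N^{-2n} u\,\nH u$. The key algebraic point is how $\langle \nH N, T\rangle$ and $\langle \nH u, T\rangle$ behave. By definition \eqref{eq:T}, $T = \nH\delta - (\Phi_*w)^{-1}\Xi$, and since $N$ depends only on $t$ and $r$ in $\Phi$-coordinates while $\Xi$ is (a multiple of) $\partial_r$ and $\nH\delta = \partial_t + (r/t)\partial_r$, the combination $T$ should be arranged precisely so that $\langle \nH N, T\rangle = \langle \nH N, \nH\delta\rangle$ up to the $\partial_r$-contributions cancelling — more concretely I expect $T$ acts on any function as $\partial_t$ in $\Phi$-coordinates (that is exactly why Lemma~\ref{l:yang} produces $\partial_t(\Phi^*f)$). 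So $TN = \partial_t(\Phi^*N) = N/t = N/\delta$, because $N$ is $1$-homogeneous. Plugging in, the identity becomes
\begin{equation}
  0 = \int_{\H^n}\left( -2n \frac{|u|^2}{N^{2}}\,\frac{|\nH N|^2}{?} \cdots + 2\frac{u}{N^{2n}}\,Tu \right)\frac{dp}{\delta^{2n+1}},
\end{equation}
so after rearranging and using $TN=N/\delta$ one gets $n\int |u|^2 N^{-2n-1}\delta^{-2n-2}N\,dp = \int u\,(Tu) N^{-2n}\delta^{-2n-1}\,dp$; i.e.\ a weighted $L^2$ identity relating $|u|^2$ and $u\,Tu$. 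The cleaner route is to take instead $f=N^{-2n}u^2$ and read off, after Cauchy--Schwarz,
\begin{equation}
  n\int_{\H^n}\frac{|u|^2}{N^{2n+2}}\,dp \le \left(\int_{\H^n}\frac{|u|^2}{N^{2n+2}}\,dp\right)^{1/2}\left(\int_{\H^n}\frac{|Tu|^2}{N^{2n}\,\delta^{2n}\,?}\,dp\right)^{1/2},
\end{equation}
which gives a Hardy-type inequality with $|Tu|$ on the right. The final step is then to compare $|Tu|$ with $|\nH u|$: since $T = \nH\delta - (\Phi_*w)^{-1}\Xi$ and $\{\nH\delta, \Xi, V_3,\dots\}$ is orthonormal, one has $|Tu|^2 = |\langle\nH u,\nH\delta\rangle|^2 + w^{-2}|\langle \nH u,\Xi\rangle|^2$, and the weights must be bookkept so that the combination $|T u|^2 / (\text{weight})$ is dominated by $|\nH u|^2$ exactly when the weight equals $|\nH N|^2$. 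This is where the identity $|\nH N|^2 = \tfrac{1-\cos r}{\sqrt{2+r^2-2\cos r - 2r\sin r}}$ from the proof of Proposition~\ref{p:koranyi} and the relation \eqref{eq:rwm} ($\partial_r(rw\mu) = -nr\mu$) enter: they are what make the constant come out to exactly $n^2$.

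The main obstacle I anticipate is the bookkeeping of weights so that everything aligns to give precisely the sharp constant $n^2$ rather than something smaller. Concretely: (i) verifying that $Tf$, as an operator, is $\partial_t$ in $\Phi$-coordinates, hence $T N = N/\delta$ and more generally $T$ annihilates functions of $r/t$ alone — this is really the content that makes $T$, not $\nH\delta$, the right object, and it is where the flaw in \cite{Yang2013} is corrected; (ii) checking that $w^{-1}|\langle\nH u,\Xi\rangle|$ appears with exactly the coefficient that, combined with $|\langle\nH u,\nH\delta\rangle|$, reconstructs $|\nH u|$ weighted by $|\nH N|$ — i.e.\ showing $|\nH N|^{-2}|Tu|^2 \le |\nH u|^2$ pointwise, which should reduce to $|\nH N|^{-2}(a^2 + w^{-2}b^2)\le a^2+b^2+c^2$ for the orthonormal components $a,b,c$ of $\nH u$, and since $|\nH N|\le 1$ this needs $w^{-2}\le |\nH N|^2$ on the $\Xi$-component, i.e.\ a pointwise inequality between $w$ and the Korányi gradient — this must be arranged carefully, perhaps by instead keeping the weighted inequality with $N^{-2n}$ and reinterpreting, and I would double-check the precise form against \eqref{eq:garofalo}. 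Modulo this algebra, which is routine once the $\Phi$-coordinate expressions of $T$, $N$, $\nH N$ and $\mu$ are in hand, the proof is a one-shot application of Lemma~\ref{l:yang} plus Cauchy--Schwarz.
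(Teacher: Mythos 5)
Your overall strategy is the paper's: apply Lemma~\ref{l:yang} to a quadratic test function, kill the boundary terms using the compact support of $u$ away from the origin, and close with Cauchy--Schwarz. You also correctly identify the key structural facts, namely that $T$ acts as $\partial_t$ on functions of $(t,r)$ (so $TN=N/\delta$) and that the proof must end by matching a $|T|$-weight against $N/|\nH N|$. But there is a genuine gap in the execution: the test function $f=N^{-2n}u^2$ is not the right one, and the weights never close up to \eqref{eq:garofalo}. With your choice, the identity reads $n\int u^2 N^{-2n}\delta^{-2n-2}\,dp=\int u\,(Tu)\,N^{-2n}\delta^{-2n-1}\,dp$, and after Cauchy--Schwarz with $|Tu|\le |T|\,|\nH u|$ the right-hand side carries the weight $|T|^2N^{-2n}\delta^{-2n}$, which is not $1$; you are left needing a pointwise comparison such as $|\nH N|^{-2}|Tu|^2\le|\nH u|^2$, and this is \emph{false}: since $|Tu|^2\le(1+w^{-2})|\nH u|^2$ with equality attainable, and $w(r)\to0$ while $|\nH N|^2\sim(1-\cos r)\to0$ as $r\to\pm2\pi$, the factor $|\nH N|^{-2}(1+w^{-2})$ blows up near the center $\cZ$. (Also note $|Tu|^2$ is not the sum $a^2+w^{-2}b^2$ of squared components --- there is a cross term --- though the Cauchy--Schwarz bound is what matters.)

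The paper's fix is to take $f=\bigl(u\,\delta^{n}|T|^{-1}\bigr)^2$ instead. Since $\Phi^*|T|^2=(1+w^2)/w^2$ depends only on $r$ and $T\delta=1$, Lemma~\ref{l:yang} gives
$n\int u^2\delta^{-2}|T|^{-2}\,dp=-\int u\,\delta^{-1}|T|^{-2}\langle\nH u,T\rangle\,dp$,
and now Cauchy--Schwarz with $|\langle\nH u,T\rangle|\le|T|\,|\nH u|$ cancels exactly one factor of $|T|^{-1}$ on each side, yielding $n^2\int u^2\delta^{-2}|T|^{-2}\,dp\le\int|\nH u|^2\,dp$ with no leftover weight on the gradient term. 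The proof is then completed not by a pointwise inequality between $w$ and $|\nH N|$, but by the exact identity $|T|^2\delta^2=N^2/|\nH N|^2$, verified by comparing $\Phi^*|T|^2=(1+w^2)/w^2$ with the $\Phi$-coordinate expression of $N^2/|\nH N|^2$ computed in the proof of Proposition~\ref{p:koranyi}. (The relation \eqref{eq:rwm} plays no role here.) So: right lemma, right mechanism, wrong test function --- and the repair is precisely to build $|T|^{-1}$ and $\delta^n$ into $f$ rather than $N^{-n}$.
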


\begin{proof}
	Applying Lemma~\ref{l:yang} to $f=\left({u\delta^n}{|T|^{-1}}\right)^2$ and letting $R_1\downarrow 0$ and $R_2\uparrow +\infty$, since $f$ has compact support outside of the origin, we obtain
	\begin{equation}
		n\int_{\H^n}\frac{u^2}{\delta^2} |T|^{-2}\,dp = -\int_{\H^n}\frac{u}{\delta}|T|^{-2}\langle \nH u, T\rangle.
	\end{equation}
	Then, as in the proof of Theorem~\ref{thm:hardy-intro2}, by Cauchy-Schwarz inequality we obtain
	\begin{equation}
		n^2\int_{\H^n}\frac{u^2}{\delta^2} |T|^{-2}\,dp\le \int_{\H^n}|\nH u|^2\,dp.
	\end{equation}
	Here, we used that, again by Cauchy-Schwarz inequality $|\langle \nH u, T\rangle|\le|\nH u||T|$.
	
	In order to complete the proof, we are left to show that $|T|^2\delta^2=N^2/|\nH N|^2$. By definition, we have 
	\begin{equation}
		\Phi^*|T|^2 = \frac{1+w^2}{w^2}.
	\end{equation}
	On the other hand, following the computations in the proof of Proposition~\ref{p:koranyi}, we have
	\begin{equation}
		\Phi^*\left(\frac{N^2}{|\nH N|^2}\right) = 2\frac{t^2}{r^2}\frac{r^2-2\cos r-2r\sin r+2}{1-\cos r}.
	\end{equation}
	The conclusion then follows at once by direct computations.
\end{proof}

\appendix
\section{An Euclidean non-radial Hardy inequality}\label{a:euclidean}

In this section we present an Euclidean version of Theorem~\ref{thm:hardy-intro2}.
Let $x=(x',x_d)\in \bR^{d-1}\times\bR$.
We consider coordinates $(t,\varpi,\varphi)\in \R_+\times  \bS^{d-2}\times (-\pi/2,\pi/2)$ in $\bR^d$, defined by
\begin{equation}
	(x', x_d) = t(\varpi\cos\varphi, \sin\varphi).
\end{equation}
In this case, the polar vector field is $\Xi=\frac1t\partial_\varphi$, which is unit thanks to the fact that
\begin{equation}\label{eq:tan}
	\varphi = \arctan\frac{x_d}{\|x'\|}.
\end{equation}
Moreover, the volume form becomes $t^{d-1}\cos^{d-2}\varphi \,dt\,d\varphi\,d\sigma(\varpi)$, where $d\sigma$ is the standard volume on $\S^{d-2}$.

Let us consider a spherical cap $\Sigma\subset\bS_+^{d-1}$, and let $C_\Sigma$ be the associated Euclidean cone. We can always assume it to be centered on the $d$-th coordinate axis, i.e., $C_\Sigma=\{\varphi>a_\Sigma\}$ for some $a_\Sigma\in(0,\pi/2)$.
We have the following.

\begin{thm}
Let $d\ge 3$. Then, letting $\psi(x',x_d)=x_d/\|x'\|$, we have
\begin{equation}
	\int_{C_\Sigma} \frac{|\langle\nabla u, \Xi\rangle|^2}{\psi}\,dx \ge \left(\frac{d-2}{2}\right)^2 \int_{C_\Sigma} \frac{|u|^2}{|x|^2}\psi\,dx,
	\qquad
	\forall u \in C^\infty_c(C_\Sigma).
\end{equation}
Moreover, the inequality is sharp.
\end{thm}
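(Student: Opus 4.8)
The plan is to mimic the proof of Theorem~\ref{thm:hardy-intro2}, transporting it to the Euclidean cone via the polar-like coordinates $(t,\varpi,\varphi)$. The key observation is that the analogue of Lemma~\ref{prop:null_div} holds in this setting: for any $f\in C^\infty_c(C_\Sigma)$, integrating $\Xi f = t^{-1}\partial_\varphi f$ against the volume form $t^{d-1}\cos^{d-2}\varphi\,dt\,d\varphi\,d\sigma(\varpi)$ and performing an integration by parts in $\varphi$ produces, via $\partial_\varphi(\cos^{d-2}\varphi) = -(d-2)\cos^{d-3}\varphi\sin\varphi$, the identity
\begin{equation}
  \int_{C_\Sigma} \Xi f\,dx = (d-2)\int_{C_\Sigma} \frac{\sin\varphi}{\cos\varphi}\,\frac{f}{t}\,dx = (d-2)\int_{C_\Sigma} \psi\,\frac{f}{|x|}\,dx,
\end{equation}
where I used $\psi = x_d/\|x'\| = \tan\varphi$ by \eqref{eq:tan}, and $t = |x|$. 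No boundary term appears because $\cos^{d-2}\varphi$ vanishes at $\varphi = \pi/2$ while $f$ is supported away from $\partial C_\Sigma = \{\varphi = a_\Sigma\}$ (and $d\ge 3$ ensures the exponent is positive). The only mild technical point here is justifying the Fubini/integration-by-parts step near the axis $\varphi=\pi/2$, which is harmless since $\cos^{d-2}\varphi$ provides the needed decay; I would state this as the Euclidean counterpart of Lemma~\ref{prop:null_div} and note the proof is a direct one-dimensional integration by parts.

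Granting this identity, the inequality follows exactly as in Theorem~\ref{thm:hardy-intro2}: apply it with $f = u^2/|x|$ for $u\in C^\infty_c(C_\Sigma)$, so that $\Xi f = 2(u/|x|)\langle\nabla u,\Xi\rangle - (u^2/|x|^2)\langle\nabla|x|,\Xi\rangle$, and since $\Xi = t^{-1}\partial_\varphi$ is tangent to the spheres $\{|x| = t\}$ the second term vanishes. This gives
\begin{equation}
  \frac{d-2}{2}\int_{C_\Sigma} \psi\,\frac{u^2}{|x|^2}\,dx = \int_{C_\Sigma} \frac{|u|}{|x|}\,|\langle\nabla u,\Xi\rangle|\,dx \le \left(\int_{C_\Sigma} \psi\,\frac{u^2}{|x|^2}\,dx\right)^{1/2}\left(\int_{C_\Sigma} \frac{|\langle\nabla u,\Xi\rangle|^2}{\psi}\,dx\right)^{1/2}
\end{equation}
by Cauchy--Schwarz, using $\psi>0$ on $C_\Sigma$; squaring and rearranging yields the claimed inequality with constant $((d-2)/2)^2$.

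For sharpness, I would exhibit a minimizing sequence whose quotient approaches $((d-2)/2)^2$, following the construction in the proof of Theorem~\ref{thm:hardy-intro2}. The natural candidate is $u\circ(t,\varpi,\varphi) = \varphi_0(t)\,v(\varphi)$ where $\varphi_0\in C^\infty_c((0,\infty))$ is fixed and $v(\varphi) = \chi(\varphi)(\cos^{d-2}\varphi\,\tan\varphi)^{\gamma}$ — equivalently $v = \chi(\varphi)(\cos^{d-3}\varphi\sin\varphi)^\gamma$ — with $\chi$ a cutoff vanishing near $\varphi = a_\Sigma$ and equal to $1$ away from it, and $\gamma\to(-1/2)^+$. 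The relation $\partial_\varphi(\cos^{d-2}\varphi) = -(d-2)\cos^{d-3}\varphi\sin\varphi$ makes the logarithmic derivative of $v$ essentially $\gamma$ times $-(d-2)\tan\varphi/\ldots$, so that $|\Xi u| \approx (d-2)|\gamma|\,|u|/(t\,)\cdot(\text{bounded})$ away from the cutoff region, producing quotient $\to (d-2)^2\gamma^2 \to (d-2)^2/4$, while a Taylor expansion of $v^2\cos^{d-2}\varphi$ as $\varphi\to(\pi/2)^-$ shows the denominator $\int v^2\,\psi\,\cos^{d-2}\varphi\,d\varphi$ diverges as $\gamma\to(-1/2)^+$, killing the contribution of the cutoff region. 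The main obstacle — really the only non-routine part — is bookkeeping the behavior near $\varphi = \pi/2$: one must verify that $u$ (extended by continuity) lies in the relevant Sobolev space and that the cutoff-region integral stays bounded while the main integral blows up, so that the ratio of error to denominator tends to zero. This is the exact analogue of \eqref{eq:taylor} and \eqref{eq:ru-sharp}, and I would carry it out by the same Taylor-expansion estimate, noting that the exponent $(d-2)(2\gamma+1) - 1$ governing integrability at $\varphi = \pi/2$ is $\ge -1$ precisely when $\gamma\ge -1/2$, with equality in the limit.
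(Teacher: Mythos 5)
Your derivation of the key identity $\int_{C_\Sigma}\Xi f\,dx=(d-2)\int_{C_\Sigma}\psi f/|x|\,dx$ and the subsequent Cauchy--Schwarz step with $f=u^2/|x|$ are correct and coincide with the paper's proof of the inequality itself. The problem is in the sharpness construction, where you have chosen the wrong test function.

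The Heisenberg template takes $v=\chi\,G^\gamma$ where $G=rw\mu$ is the function appearing \emph{inside} the integration by parts, i.e.\ the one satisfying $G'=-n\psi\mu$. The correct Euclidean analogue is therefore $G(\varphi)=\cos^{d-2}\varphi$ (coefficient of $\partial_\varphi$ in $t\,\Xi$ times the angular density), whose logarithmic derivative is exactly $-(d-2)\tan\varphi=-(d-2)\psi$, so that $v=\chi\cos^{(d-2)\gamma}\varphi$ gives the pointwise identity $|\Xi u|^2/\psi=(d-2)^2\gamma^2\,\psi|u|^2/|x|^2$ away from the cutoff, and
\begin{equation}
v^2\psi\cos^{d-2}\varphi\sim(\tfrac{\pi}{2}-\varphi)^{(d-2)(2\gamma+1)-1}\quad\text{as }\varphi\to(\tfrac{\pi}{2})^-,
\end{equation}
which diverges as $\gamma\to(-1/2)^+$, exactly as in \eqref{eq:taylor}. (This is the paper's choice, written there as $u=\eta(t)\cos^{\gamma'}\varphi$ with $\gamma'=(d-2)\gamma\to((2-d)/2)^+$.) You instead took $v=\chi\,(\cos^{d-2}\varphi\tan\varphi)^\gamma=\chi\,(\cos^{d-3}\varphi\sin\varphi)^\gamma$, i.e.\ a power of $-G'/(d-2)=G\psi$ rather than of $G$. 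This breaks the argument in two ways. First, the logarithmic derivative of your $v$ is $\gamma(-(d-3)\tan\varphi+\cot\varphi)$, not $-(d-2)\gamma\tan\varphi$, so the pointwise quotient of integrands is $\gamma^2((d-3)-\cot^2\varphi)^2$, which is non-constant and tends to $(d-3)^2\gamma^2$ near the axis --- the wrong constant. Second, $v^2\psi\cos^{d-2}\varphi\sim(\tfrac{\pi}{2}-\varphi)^{(d-3)(2\gamma+1)}$, which remains integrable even at $\gamma=-1/2$; the denominator never blows up, so there is no concentration near $\varphi=\pi/2$ and the cutoff region's contribution cannot be made negligible. (Note the internal inconsistency: the exponent $(d-2)(2\gamma+1)-1$ you quote at the end is the one produced by $\cos^{(d-2)\gamma}\varphi$, not by your $v$.) Replacing your $v$ by $\chi\cos^{(d-2)\gamma}\varphi$ repairs the argument and reduces it to the paper's.
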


\begin{proof}
	Let $v\in C^\infty_c(C_\Sigma)$. An integration by part yields
	\begin{equation}
		\begin{split}
			\int_{C_\Sigma} \langle\nabla v, \Xi\rangle\,dx
			&= \int_0^{+\infty} t^{d-1}\,dt \int_{\bS^{d-2}}d\sigma(\varpi)\int_{a_\Sigma}^{\pi/2} \frac{1}{t}\partial_\varphi v \cos^{d-2}\varphi\,d\varphi \\
			&= (d-2)\int_0^{+\infty} t^{d-1}\,dt \int_{\bS^{d-2}}d\sigma(\varpi)\int_{a_\Sigma}^{\pi/2} \frac{v}{t} \cos^{d-3}\varphi\sin\varphi\,d\varphi \\
			&= (d-2)\int_{C_\Sigma} \frac{v}{|x|}\psi\,dx
		\end{split}
	\end{equation}
	Here, we used that $\psi=\tan\varphi$ in polar coordinates, thanks to \eqref{eq:tan}.
	The desired inequality then follows from the above, choosing $v = u^2/t$ and applying Cauchy-Schwarz on the l.h.s..
	
	To obtain the sharpness, observe that choosing $u(t,\varphi) = \eta(t)(\cos\varphi)^\gamma$ where $\eta(t)$ is any cutoff with compact support and $\gamma\in\R$, we have
	\begin{equation}
		\int_{C_\Sigma} \frac{|\langle\nabla u, \Xi\rangle|^2}{\psi}\,dx = \gamma^2 \int_{C_\Sigma} \frac{|u|^2}{|x|^2}\psi\,dx.
	\end{equation}
	Direct computations show that the last integral is finite if and only if $\gamma>\frac{2-d}{2}$. Then, the statement follows via a cut-off argument as the one in the proof of Theorem~\ref{thm:hardy-intro2}, letting $\gamma\to\left(\frac{2-d}{2}\right)^+$.
\end{proof}

Since the function $\psi$ is unbounded on the vertical axis $\{x' =0\}$, the above does not yield any upper bound on $c^ {\perp,\text{eucl}}_n(\Sigma)$. Indeed, we can only recover the following lower bound, which is not sharp but correctly shows that $c^ {\perp,\text{eucl}}_n(\Sigma)\to +\infty$ as $\Sigma$ degenerates to a point (i.e., $a_\Sigma\uparrow \pi/2$).

\begin{cor}
	Let $\Sigma = \{\varphi>a_\Sigma\}\subset\S^{d-1}$ be a spherical cap. Then, 
	\begin{equation}
		c^ {\perp,\text{eucl}}_n(\Sigma)\ge \left(\frac{d-2}{2}\right)^2 \tan^2 a_\Sigma.
	\end{equation}
\end{cor}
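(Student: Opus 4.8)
The plan is to obtain the corollary as an immediate consequence of the weighted Euclidean Hardy inequality just established, using only the crude fact that the weight $\psi$ is bounded below on the cone. First I would note that, by \eqref{eq:tan}, in polar coordinates $\psi = \tan\varphi$, and since on $C_\Sigma = \{\varphi > a_\Sigma\}$ we have $\varphi \in (a_\Sigma, \pi/2)$ with $\tan$ increasing there, it follows that $\psi \ge \tan a_\Sigma > 0$ pointwise on $C_\Sigma$. There is no upper bound for $\psi$, since it blows up near the axis $\{x'=0\}$, but the lower bound is all that is needed.

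Next, for an arbitrary $u \in C^\infty_c(C_\Sigma)$, I would plug these bounds into
\[
	\int_{C_\Sigma} \frac{|\langle\nabla u, \Xi\rangle|^2}{\psi}\,dx \ge \left(\frac{d-2}{2}\right)^2 \int_{C_\Sigma} \frac{|u|^2}{|x|^2}\psi\,dx,
\]
using $1/\psi \le (\tan a_\Sigma)^{-1}$ on the left-hand side and $\psi \ge \tan a_\Sigma$ on the right-hand side — the opposite monotonicity of the two substitutions is precisely what makes both inequalities go the right way — to deduce
\[
	\int_{C_\Sigma} |\langle\nabla u, \Xi\rangle|^2\,dx \ge \left(\frac{d-2}{2}\right)^2 \tan^2 a_\Sigma \int_{C_\Sigma} \frac{|u|^2}{|x|^2}\,dx.
\]
Finally, since $\Xi$ is a unit vector field orthogonal to the radial direction $x/|x|$, one has the pointwise bound $|\langle\nabla u, \Xi\rangle| \le |\nabla^\perp u|$, so the left-hand side above is at most $\int_{C_\Sigma} |\nabla^\perp u|^2\,dx$. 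Dividing through by $\int_{C_\Sigma} |u|^2/|x|^2\,dx$ and taking the infimum over $u \in C^\infty_c(C_\Sigma)$ in the definition of $c^{\perp,\text{eucl}}_n(\Sigma)$ gives the claimed bound.

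There is no genuine obstacle here; the statement is a soft corollary. The only points deserving a moment of care are tracking the direction of the two weight substitutions and recalling that $c^{\perp,\text{eucl}}_n(\Sigma)$ is by definition the infimum of the relevant Rayleigh quotients, so that a bound valid for every test function transfers verbatim to the constant. I would also remark, as the surrounding text does, that the resulting bound is certainly not sharp, but it does capture the correct qualitative behavior $c^{\perp,\text{eucl}}_n(\Sigma) \to +\infty$ as $a_\Sigma \uparrow \pi/2$, i.e.\ as $\Sigma$ degenerates to a point.
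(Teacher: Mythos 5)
Your argument is correct and is exactly the one the paper intends: bound $\psi=\tan\varphi\ge\tan a_\Sigma$ from below on $C_\Sigma$ in the weighted inequality of the appendix theorem (once on each side, in the two compatible directions), then use $|\langle\nabla u,\Xi\rangle|\le|\nabla^\perp u|$ and take the infimum over test functions. Nothing is missing.
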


\bibliographystyle{abbrv}
\bibliography{biblio}

\end{document}